\newcommand{\LL}{\mathbb{L}_x}
\newcommand{\R}{\mathbb{R}}
\newcommand{\beq}{\begin{equation}}
\newcommand{\eeq}{\end{equation}}
\newcommand{\bea}{\begin{eqnarray}}
\newcommand{\eea}{\end{eqnarray}}
\newcommand{\beas}{\begin{eqnarray*}}
\newcommand{\eeas}{\end{eqnarray*}}
\def\cS{\mathcal{S}}
\def\RR{\mathbb{R}}
\newtheorem{theorem}{Theorem}[section]
\newtheorem{lemma}[theorem]{Lemma}
\newtheorem{coroll}[theorem]{Corollary}
\newtheorem{prop}[theorem]{Proposition}
\newtheorem{definition}[theorem]{Definition}
\newtheorem{remark}[theorem]{Remark}
\newtheorem{ass}[theorem]{Assumption}
\newcommand{\msc}[1]{\textbf{MSC2010 Classification:} #1.}
\newcommand{\jel}[1]{\textbf{JEL Classification:} #1.}
\newcommand{\keywords}[1]{\textbf{Key words:} #1.}
\def\vs{\vspace}
\def\hs{\hspace}
\def\EE{\mathsf E}
\def\PP{\mathsf P}
\def\CC{\mathcal C}
\def\DD{\mathcal D}
\def\eps{\varepsilon}
\def\cF{{\cal F}}
\def\cJ{{\cal J}}
\def\LL{\mathbb L}
\def\theequation{\arabic{section}.\arabic{equation}}
\begin{document}
\title{\textbf{
Optimal Entry to an Irreversible Investment Plan\\ with Non Convex Costs}}

\author{Tiziano De Angelis,\:\:Giorgio Ferrari,\:\:Randall Martyr, \\ John Moriarty}
\maketitle
\begin{abstract}
A problem of optimally purchasing electricity at a {\em real-}valued spot price (that is, with potentially negative cost)  
has been recently addressed in De Angelis, Ferrari and Moriarty (2015) [SIAM J.~Control Optim.~53(3)]. 
This problem can be considered one of irreversible investment with a cost functional which is non convex with respect to the control variable. In this paper we study the optimal entry into this investment plan. The optimal entry policy can have an irregular boundary arising from this non convexity, with a kinked shape.  
\end{abstract}
\msc{60G40, 93E20, 35R35, 49L20, 65C05}
\vspace{+4pt}

\noindent \jel{C61, D92, E22, Q41}
\vspace{+8pt}

\noindent\keywords{continuous-time inventory, optimal stopping, singular stochastic control, irreversible investment, Ornstein-Uhlenbeck price process} 
\section{Introduction}

In this paper we consider the question of optimal entry into a plan of irreversible investment with a cost functional which is non convex with respect to the control variable. The irreversible investment problem is that of \cite{DeAFeMo14}, in which the investor commits to delivering a unit of electricity to a consumer at a future random time $\Theta$ and may purchase and store electricity in real time at the stochastic (and potentially negative) spot price $(X_t)_{t \geq 0}$. In the optimal entry problem considered here, the consumer is willing to offer a single fixed initial payment $P_0$ in return for this commitment and the investor must choose a stopping time $\tau$ at which to accept the initial premium and enter the contract. If $\Theta \leq \tau$ then the investor's opportunity is lost and in this case no cashflows occur. If $\tau<\Theta$ then the inventory must be full at the time $\Theta$ of demand, any deficit being met by a less efficient method whose additional cost is represented by a convex factor $\Phi$ of the undersupply. The investor seeks to minimise the total expected costs, net of the initial premium $P_0$, by choosing $\tau$ optimally and by optimally filling the inventory from time $\tau$ onwards. 

Economic problems of optimal entry and exit under uncertain market prices have attracted significant interest. In the simplest formulation the timing of entry and/or exit is the only decision to be made and the planning horizon is infinite: see for example \cite{Dixit89} and \cite{McDonaldSiegel}, in which the market price is a geometric Brownian motion (GBM), and related models in \cite{DP} and \cite{Trigeorgis}. An extension of this problem to multiple types of economic activity is considered in \cite{BrekkeOksendal} and solved using stochastic calculus. In addition to the choice of entry / exit time, the decision problem may also depend on another control variable representing for instance investment or production capacity. For example in \cite{DZ00} the rate of production is modeled as a progressively measurable process whereas in \cite{PhamGuo} the production capacity is a process of bounded variation. In this case the problem is usually solved by applying the dynamic programming principle to obtain an associated Hamilton-Jacobi-Bellman (HJB) equation. If the planning horizon is finite then the optimal stopping and control strategies are time-dependent and given by suitable curves, see for example~\cite{DeAFe13}.

Typically, although not universally, the costs in the aforementioned problems are assumed to be convex with respect to the control variable. In addition to being reasonable in a wide range of problems, this assumption usually simplifies the mathematical analysis. In the present problem the underlying commodity is electricity, for which negative prices have been observed in several markets (see, e.g., \cite{GR} and \cite{LS}). The spot price is modelled by an Ornstein-Uhlenbeck process which is mean reverting and may take negative values and, as shown in \cite{DeAFeMo14}, this makes our control problem neither convex nor concave: to date such problems have received relatively little attention in the literature. In our setting the control variable represents the cumulative amount of electricity purchased by the investor in the spot market for storage. This control is assumed to be monotone, so that the sale of electricity back to the market is not possible, and also bounded to reflect the fact that the inventory used for storage has finite capacity. The investment problem falls into the class of singular stochastic control (SSC) problems (see \cite{Alvarez99}, \cite{KaratzasShreve84}, \cite{KaratzasShreve86}, among others).  

Borrowing ideas from \cite{PhamGuo}, we begin by decoupling the control (investment) problem from the stopping (entry) problem. The value function of this mixed stopping-then-control problem is shown to coincide with that of an appropriate optimal stopping problem over an infinite time-horizon whose gain function is the value function of the optimal investment problem with fixed entry time equal to zero. Unlike the situation in \cite{PhamGuo}, however, the gain function in the present paper is a function of two variables without an explicit representation. Indeed \cite{DeAFeMo14} identifies three regimes for the gain function, depending on the problem parameters, only two of which are solved rigorously: a {\em reflecting} regime, in which the control may be singularly continuous, and a {\em repelling} regime, in which the control is purely discontinuous. We therefore only address these two cases in this paper and leave the remaining open case for future work.

The optimal entry policies obtained below depend on the spot price and the inventory level and are described by suitable curves. 
On the one hand, for the reflecting case we prove that the optimal entry time is of a single threshold type as in \cite{DZ00} and \cite{PhamGuo}. On the other hand, the repelling case is interesting since it gives either a single threshold strategy or, alternatively, a complex optimal entry policy such that for any fixed value of the inventory level, the continuation region may be disconnected.

The paper is organised as follows. In Section \ref{ProblemFormulation} we set up the mixed irreversible investment-optimal entry problem, whose two-step formulation is then obtained in Section \ref{decouple}. Section \ref{sec:Timing} is devoted to the analysis of the optimal entry decision problem, with the repelling case studied separately in Section \ref{sec:chatgo}. Afterwards follows the conclusion, appendix, acknowledgements and references.

\section{Problem Formulation}
\label{ProblemFormulation}

We begin by recalling the optimal investment problem introduced in \cite{DeAFeMo14}.
Let $(\Omega,\mathcal{A},\PP)$ be a complete probability space, on which is defined a one-dimensional standard Brownian motion $(B_t)_{t\ge0}$.
We denote by $\mathbb{F}:=(\cF_t)_{t\ge0}$ the filtration generated by $(B_t)_{t\ge0}$ and augmented by $\PP$-null sets.
As in \cite{DeAFeMo14}, the spot price of electricity $X$ follows a standard time-homogeneous Ornstein-Uhlenbeck process with positive volatility $\sigma$, positive adjustment rate $\theta$ and positive asymptotic (or equilibrium) value $\mu$; i.e., $X^x$ is the unique strong solution of
\begin{align}
\label{OU}
dX^x_t= \theta(\mu-X_t^x)dt + \sigma dB_t, \quad\text{for $t>0$,\,\,\, with $X^x_0=x\in\RR$.}
\end{align}
\noindent Note that this model allows negative prices, which is consistent with the requirement to balance supply and demand in real time in electrical power systems and also consistent with the observed prices in several electricity spot markets (see, e.g., \cite{GR} and \cite{LS}).

We denote by $\Theta$ the random time of a consumer's demand for electricity. This is modelled as an $\mathcal{A}$-measurable positive random variable independent of $\mathbb{F}$ and distributed according to an exponential law with parameter $\lambda>0$, so that effectively the time of demand is completely unpredictable.
Note also that since $\Theta$ is independent of $\mathbb{F}$, the Brownian motion $(B_t)_{t\geq 0}$ remains a Brownian motion in the enlarged filtration $\mathbb{G}:=(\mathcal{G}_t)_{t\ge0}$, with $\mathcal{G}_t:=\mathcal{F}_t \vee \sigma(\{\Theta \leq s\}:\, s \leq t)$, under which $\Theta$ becomes a stopping time (see, e.g., Chapter 5, Section 6 of \cite{JYC}).

We will denote by $\tau$ any element of $\mathcal{T}$, the set of all $(\mathcal{F}_t)$-stopping times. At any $\tau$ the investor may enter the contract by accepting the initial premium $P_0$ and committing to deliver a unit of electricity at the time $\Theta$.
At any time during $[\tau, \Theta)$ electricity may be purchased in the spot market and stored, thus increasing the total inventory $C^{c,\nu} = (C^{c,\nu})_{t \ge 0}$, which is defined as
\begin{align}
C^{c,\nu}_t:=c + \nu_t\,,\qquad t\ge0.
\end{align}
Here $c\in [0,1]$ denotes the inventory at time zero and $\nu_t$ is the cumulative amount of electricity purchased up to time $t$. We specify the (convex) set of admissible investment strategies by requiring that $\nu \in \mathcal{S}^c_{\tau}$, where
\begin{eqnarray}
\label{admissiblecontrols}
\mathcal{S}^c_\tau \hspace{-0.2cm}&: = & \hspace{-0.2cm} \{\nu:\Omega \times \mathbb{R}_{+} \mapsto  \mathbb{R}_{+}, (\nu_{t}(\omega))_{t \geq 0}\mbox{ is nondecreasing,\,\,left-continuous,} \nonumber \\
&& \hspace{2cm} (\mathcal{F}_t)-\mbox{adapted, with} \,\,c + \nu_t\leq 1\,\,\,\forall t \geq 0, \nonumber \,\,\nu_\tau=0\,\,\,\,\PP-\mbox{a.s.}\}.
\end{eqnarray}
The amount of energy in the inventory is bounded above by 1 to reflect the investor's limited ability to store. The left continuity of $\nu$ ensures that any electricity purchased at time $\Theta$ is irrelevant for the optimisation. The requirement that $\nu$ be $(\mathcal{F}_t)$-adapted guarantees that all investment decisions are taken only on the basis of the price information available up to time $t$. The optimisation problem is given by 
\begin{align}\label{vfun00}
\inf_{\tau\ge0,\:
\nu\in\cS^c_{\tau}}\EE\Big[\Big(\int_{\tau}^{\Theta}{X^x_t}d\nu_t+X^{x}_\Theta\Phi(C^{y,\nu}_\Theta) - P_0\Big)\mathds{1}_{\{\tau < \Theta\}}\Big].
\end{align}
Here the first term represents expenditure in the spot market and the second is a penalty function:
if the inventory is not full at time $\Theta$ then it is filled by a less efficient method, so that the terminal spot price is weighted by a strictly convex function $\Phi$. We make
the following standing assumption:
\begin{ass}\label{ass:phi}
$\Phi: \mathbb{R} \mapsto \mathbb{R}_+$ lies in $C^2(\mathbb{R})$ and is decreasing and strictly convex in $[0,1]$ with $\Phi(1)=0$.
\end{ass}
\noindent For simplicity we assume that costs are discounted at the rate $r=0$. This involves no loss of generality since the independent random time of demand performs an effective discounting, as follows.
Recalling that $\Theta$ is independent of $\mathbb{F}$ and distributed according to an exponential law with parameter $\lambda>0$, Fubini's theorem gives that \eqref{vfun00} may be rewritten as
\begin{align}
\label{Vfun0}
V(x,c):=\inf_{\tau\ge0,\:
\nu\in\cS^c_{\tau}}\cJ_{x,c}(\tau,\nu)
\end{align}
with
\begin{align}\label{def:J}
\cJ_{x,c}(\tau,\nu):=\EE\Big[\int^{\infty}_\tau {\hs{-4pt}e^{-\lambda t}X^x_t\,d{\nu}_t}+\int_\tau^{\infty}\hs{-4pt}e^{-\lambda t}\lambda X^x_{t}\Phi(C^{c,\nu}_{t})dt-e^{-\lambda\tau}P_0\Big],
\end{align}
adopting the convention that on the set $\{\tau = +\infty\}$ we have $\int_{\tau}^{\infty}:=0$ and $e^{-\lambda\tau}:=0$. The discounting of costs may therefore be accomplished by appropriately increasing the exponential parameter $\lambda$.

\section{Decoupling the Problem and Background Material}
\label{decouple}

To deal with \eqref{Vfun0} we borrow arguments from \cite{PhamGuo} to show that the stopping (entry) problem can be split from the control (investment) problem, leading to a two-step formulation. We first briefly recall some results from \cite{DeAFeMo14}, where the control problem has the value function
\begin{align}\label{def:U}
U(x,c):=\inf_{\nu\in\cS^c_0}\cJ^0_{x,c}(\nu)
\end{align}
with 
\begin{align}\label{def:J0}
\cJ^0_{x,c}(\nu):=\EE\Big[ \int^{\infty}_0 {e^{-\lambda t}X^x_t\,d{\nu}_t} + \int_0^{\infty}e^{-\lambda s}\lambda X^x_{t}\Phi(C^{c,\nu}_{t})dt \Big]\,.
\end{align}
As was shown in \cite[Sec.~2]{DeAFeMo14}, the function 
\begin{align}
\label{def-k}
k(c):=\lambda+\theta+\lambda \,\Phi'(c), \qquad c\in\mathbb{R},
\end{align}
appears in an optimal stopping functional which may be associated with $U$. For convenience we let $\hat{c}\in\mathbb{R}$ denote the unique solution of $k(c)=0$ if it exists and write
\begin{align}\label{def:zeta}
\zeta(c):=\int_c^1{k(y)dy}=(\lambda + \theta)(1-c) - \lambda\Phi(c),\qquad c\in[0,1].
\end{align}
We formally introduce the variational problem associated with $U$:
\begin{align}
\label{HJB-U}
\max\{-\mathbb{L}_XU+ \lambda U-\lambda x \Phi(c),-U_c-x\}=0,\qquad\text{on $\mathbb{R} \times (0,1),$}
\end{align}
where $\mathbb{L}_X$ is the second order differential operator associated to the infinitesimal generator of $X$:
\begin{align}\label{def:LX}
\mathbb{L}_{X}f\,(x):=\frac{1}{2}\sigma^2 f''(x) + \theta(\mu - x)f'(x),\quad\text{for $f\in C^2_b(\RR)$ and $x\in\RR$.}
\end{align}
According to standard theory on control problems we define the inaction set for problem \eqref{def:U} by
\begin{align}\label{def:C}
\CC:=\{(x,c)\in \mathbb{R}\times[0,1]\,:\,U_c(x,c)>-x \}.
\end{align}
The non convexity of functional \eqref{def:J0} with respect to the control variable $\nu$, which arises due to the real-valued factor $X^x_t$, places it outside the standard existing literature on SSC problems. We therefore collect here the solutions proved in Sections 2 and 3 of \cite{DeAFeMo14}.
\begin{prop}\label{prop:backmat}
We have $|U(x,c)|\le C(1+|x|)$ for $(x,c)\in\RR\times[0,1]$ and a suitable constant $C>0$. Moreover the following holds
\begin{itemize}
\item[  i)] If $\hat{c}<0$ (i.e.~$k(\,\cdot\,)>0$ in $[0,1]$), then $U\in C^{2,1}(\mathbb{R}\times[0,1])$ and it is a classical solution of \eqref{HJB-U}. The inaction set \eqref{def:C} is given by
\begin{align}
\CC=\{(x,c)\in \mathbb{R}\times[0,1]\,:\,x>\beta_*(c) \}
\end{align}
for some function $\beta_*\in C^1([0,1])$ which is decreasing and dominated from above by $x_0(c)\wedge\hat{x}_0(c)$, $c\in[0,1]$, with
\begin{align}\label{eq:x0xhat}
x_0(c):=-\theta\mu\Phi'(c)/k(c)\quad\text{and}\quad \hat{x}_0(c):=\theta\mu/k(c),
\end{align}
(cf.~\cite[Prop.~2.5 and Thm.~2.8]{DeAFeMo14}). For $c\in[0,1]$ the optimal control is given by
\begin{align}
\label{optcontr00}
\nu_t^*=\left[g_*\left(\inf_{0 \leq s \leq t} X^x_s\right)-c\right]^+, \qquad t>0, \quad
\nu_0^*= 0,
\end{align}
with $g_*(x):=\beta_*^{-1}(x)$, $x\in(\beta_*(1),\beta_*(0))$, and $g_* \equiv 0$ on $[\beta_*(0),\infty)$, $g_* \equiv 1$ on $(-\infty,\beta_*(0)]$.
\item[ ii)]  If $\hat{c}>1$ (i.e.~$k(\,\cdot\,)<0$ in $[0,1]$), then $U\in W^{2,1,\infty}_{loc}(\mathbb{R}\times[0,1])$ and it solves \eqref{HJB-U} in the a.e.~sense. The inaction set \eqref{def:C} is given by
\begin{align}
\CC=\{(x,c)\in \mathbb{R}\times[0,1]\,:\,x<\gamma_*(c) \}
\end{align}
with suitable $\gamma_*\in C^1([0,1])$, decreasing and bounded from below by $\tilde{x}(c)\vee\overline{x}_0(c)$, $c\in[0,1]$, with
\begin{align}\label{eq:x0xtild}
\overline{x}_0(c):= \theta\mu\Phi(c)/\zeta(c)\quad\text{and}\quad\tilde{x}(c):=\theta\mu(1-c)/\zeta(c),
\end{align}
(cf.~\cite[Thm.~3.1 and Prop.~3.4]{DeAFeMo14}). Moreover $U(x,c)=x(1-c)$ for $x\ge\gamma_*(c)$, $c\in[0,1]$, and for any $c\in[0,1]$ the optimal control is given by (cf.~\cite[Thm.~3.5]{DeAFeMo14})
\begin{align}
\label{op-contr01}
\nu^*_t:=\left\{
\begin{array}{ll}
0, & t\le \tau_*,\\
(1-c), & t>\tau_*
\end{array}
\right.
\end{align}
with $\tau_*:=\inf\big\{t\ge 0\,:\,X^x_t\ge \gamma_*(c)\big\}$.
\end{itemize}
\end{prop}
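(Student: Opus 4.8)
Proposition \ref{prop:backmat} collects results established in \cite{DeAFeMo14}, so strictly speaking its proof is by reference; for completeness I indicate the route one would take to obtain these statements directly. The linear growth bound is the easy part: an upper bound for $U$ comes from evaluating $\cJ^0_{x,c}$ at the null control $\nu\equiv 0$, which gives $U(x,c)\le\EE\big[\int_0^\infty e^{-\lambda t}\lambda X^x_t\Phi(c)\,dt\big]$, and this is $O(1+|x|)$ since $\EE[X^x_t]=\mu+(x-\mu)e^{-\theta t}$ and $\Phi$ is bounded on $[0,1]$; a matching lower bound follows by integrating $\int e^{-\lambda t}X^x_t\,d\nu_t$ by parts (using $0\le\nu\le 1$) and invoking $\EE\big[\sup_{t\ge 0}e^{-\lambda t}|X^x_t|\big]\le C(1+|x|)$ for the Ornstein--Uhlenbeck process.

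For the reflecting regime ($\hat c<0$, i.e.\ $k>0$ on $[0,1]$) the plan is a free-boundary/verification argument. Differentiating \eqref{HJB-U} in $c$ suggests that $P:=U_c$ should solve an optimal stopping problem with obstacle $-x$, namely $\max\{-\mathbb{L}_XP+\lambda P-\lambda x\Phi'(c),\,-P-x\}=0$, which identifies $\CC$ as a region of the form $\{x>\beta_*(c)\}$. I would build a candidate $\widehat U$ by solving the linear ODE $\mathbb{L}_X\widehat U-\lambda\widehat U=-\lambda x\Phi(c)$ for $x>\beta_*(c)$, selecting the solution of correct growth via the fundamental increasing/decreasing solutions $\psi_\lambda,\phi_\lambda$ of $\mathbb{L}_Xu=\lambda u$, and then imposing continuous and smooth fit of $\widehat U_c$ with $-x$ along $x=\beta_*(c)$; the resulting equation characterises $\beta_*$, and a sign analysis of the source and obstacle terms yields both monotonicity of $\beta_*$ and the a priori bound $\beta_*\le x_0\wedge\hat x_0$ (the explicit thresholds \eqref{eq:x0xhat} being precisely the prices beyond which immediate investment is forced). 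A verification theorem — It\^{o}'s formula applied to $e^{-\lambda t}\widehat U(X^x_t,C^{c,\nu}_t)$, the HJB inequalities giving $\widehat U\le\cJ^0_{x,c}(\nu)$ for all admissible $\nu$, and equality along the Skorokhod reflection of $(X,C)$ at $\beta_*$ — then identifies $\widehat U$ with $U$ and proves optimality of \eqref{optcontr00}. The running-minimum form of the optimal control is immediate because $\beta_*$ decreasing makes $g_*=\beta_*^{-1}$ decreasing, so the inventory is incremented exactly when $X^x$ sets a new low.

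For the repelling regime ($\hat c>1$, i.e.\ $k<0$ on $[0,1]$) the same scheme applies, but the control is now bang-bang: it is never worthwhile to invest incrementally, so one guesses $U(x,c)=x(1-c)$ on the action region $\{x\ge\gamma_*(c)\}$ (buy $1-c$ units now, incurring no terminal penalty since $\Phi(1)=0$), solves the linear ODE on $\{x<\gamma_*(c)\}$ with value matching at $\gamma_*$, and fixes $\gamma_*$ by smooth fit; the bounds $\gamma_*\ge\tilde x\vee\overline x_0$ again mark where a jump strictly beats waiting. Here $U$ is only $W^{2,1,\infty}_{loc}$ rather than $C^{2,1}$ because $U_c$ need not be continuous across $\gamma_*$, so the verification step must be carried out on a mollification of $\widehat U$ and then passed to the limit.

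The main obstacle throughout is precisely the non-convexity of $\cJ^0_{x,c}$ in $\nu$ caused by the sign-changing factor $X^x_t$: it removes the usual singular-control toolkit (convexity-based existence of an optimal control, and automatic $C^1$ regularity of $U$ in the control direction), so the argument cannot be run uniformly and must be split according to the sign of $k$, with the global geometry of the free boundary read off from the signs of $k$ in \eqref{def-k} and $\zeta$ in \eqref{def:zeta} rather than from convexity. I would expect the most delicate technical point to be the justification of the verification computation in the repelling case, where the reduced regularity of $U$ has to be handled carefully.
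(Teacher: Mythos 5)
You correctly observe that Proposition \ref{prop:backmat} is stated in the paper without proof: it is an explicit restatement of results from \cite{DeAFeMo14} (Props.~2.5, 3.4 and Thms.~2.8, 3.1, 3.5 there), announced by ``We therefore collect here the solutions proved in Sections 2 and 3 of \cite{DeAFeMo14}.'' So there is no in-paper argument to compare against, and the citation you give is precisely what the authors offer. Your completeness sketch is consistent with what the present paper discloses about those proofs: in Section \ref{sec:chatneg} the authors indeed write $U_c(x,c)=-x+u(x,c)$ for an associated optimal stopping value $u$ (i.e.\ $U_c$ has obstacle $-x$ and $u$ has obstacle $0$), exactly the differentiate-the-HJB route you describe, and the bang-bang structure of the repelling regime is confirmed by \eqref{eq:Uanalyt2} and \eqref{op-contr01}.

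One inaccuracy in the sketch is worth correcting. You attribute the reduced $W^{2,1,\infty}_{loc}$ regularity in the repelling case to a possible discontinuity of $U_c$ across $\gamma_*$. In fact, once $\gamma_*$ is $C^1$ and value matching and smooth fit in $x$ hold, the identity
\[
\tfrac{d}{dc}\,U(\gamma_*(c)\pm,c)=U_x(\gamma_*(c)\pm,c)\,\gamma_*'(c)+U_c(\gamma_*(c)\pm,c)
\]
evaluated on both sides forces $U_c$ to be continuous across $\gamma_*$ as well; the obstruction to $C^{2,1}$ regularity is the jump of $U_{xx}$ there. This is precisely what the present paper quantifies: Proposition \ref{prop:discont} computes the jump $\Delta^{\mathcal L}(\gamma_*(c),c)$ of $(\mathbb{L}_X-\lambda)U$, and since $U$ and $U_x$ are continuous that jump is carried entirely by $U_{xx}$. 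The rest of your outline is unaffected, but this point is worth getting right because the jump of $U_{xx}$ at $\gamma_*$ is exactly what produces the sign change of $\mathcal L+\lambda P_0$, and hence the kinked boundary and possibly disconnected continuation region, analysed in Section \ref{sec:chatgo}.
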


We now perform the decoupling into two sub-problems, one of control and one of stopping.

\begin{prop}\label{decoupling}
If $\hat{c}<0$ or $\hat{c}>1$ then the value function $V$ of \eqref{Vfun0} can be equivalently rewritten as
\begin{align}\label{dec01}
V(x,c)=\inf_{\tau\ge0}\EE\Big[e^{-\lambda \tau}\Big(U(X^x_\tau,c)-P_0\Big)\Big],
\end{align}
with the convention $e^{-\lambda \tau}(U(X^x_\tau,c)-P_0):=\liminf_{t \uparrow \infty} e^{-\lambda t}(U(X^x_t,c)-P_0) =0$ on $\{\tau=\infty\}$.
\end{prop}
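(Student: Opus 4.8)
The plan is to prove the two inequalities ``$\le$'' and ``$\ge$'' in \eqref{dec01} separately, using the time-homogeneous strong Markov property of $X^x$ together with the explicit optimal controls for $U$ recorded in Proposition \ref{prop:backmat}. On $\{\tau=+\infty\}$ both sides of \eqref{dec01} vanish by the stated convention (which is consistent since $|U(x,c)|\le C(1+|x|)$ and standard estimates for the Ornstein--Uhlenbeck process give $e^{-\lambda t}(U(X^x_t,c)-P_0)\to 0$ $\PP$-a.s.), so it suffices to argue on $\{\tau<\infty\}$, where $e^{-\lambda\tau}>0$.

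For ``$\ge$'', fix $\tau\in\mathcal{T}$ and $\nu\in\cS^c_\tau$. Since $\nu$ is nonnegative, nondecreasing and $\nu_\tau=0$, it vanishes on $[0,\tau]$, so that $C^{c,\nu}\equiv c$ there; changing variable $t=\tau+u$ and writing $\tilde\nu_u:=\nu_{\tau+u}$ we get
\begin{align*}
\cJ_{x,c}(\tau,\nu)=\EE\Big[e^{-\lambda\tau}\Big(\int_0^\infty e^{-\lambda u}X^x_{\tau+u}\,d\tilde\nu_u+\int_0^\infty e^{-\lambda u}\lambda X^x_{\tau+u}\Phi(c+\tilde\nu_u)\,du-P_0\Big)\Big].
\end{align*}
Conditioning on $\cF_\tau$ and using the strong Markov property of $X^x$ together with the facts that, given $\cF_\tau$, $(X^x_{\tau+u})_{u\ge0}$ is an Ornstein--Uhlenbeck process started at the $\cF_\tau$-measurable point $X^x_\tau$ and that $\tilde\nu$ is nondecreasing, left-continuous, $(\cF_{\tau+u})_{u\ge0}$-adapted with $\tilde\nu_0=0$ and $c+\tilde\nu_u\le1$, a standard conditioning argument in the spirit of \cite{PhamGuo} yields
\begin{align*}
\EE\Big[\int_0^\infty e^{-\lambda u}X^x_{\tau+u}\,d\tilde\nu_u+\int_0^\infty e^{-\lambda u}\lambda X^x_{\tau+u}\Phi(c+\tilde\nu_u)\,du\,\Big|\,\cF_\tau\Big]\ge U(X^x_\tau,c)\quad\PP\text{-a.s.}
\end{align*}
Hence $\cJ_{x,c}(\tau,\nu)\ge\EE[e^{-\lambda\tau}(U(X^x_\tau,c)-P_0)]$, and taking the infimum over $\tau$ and $\nu$ gives ``$\ge$''.

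For ``$\le$'' it suffices, given any $\tau\in\mathcal{T}$, to exhibit a control $\nu\in\cS^c_\tau$ with $\cJ_{x,c}(\tau,\nu)=\EE[e^{-\lambda\tau}(U(X^x_\tau,c)-P_0)]$. I would take $\nu\equiv 0$ on $[0,\tau]$ and, for $t>\tau$, let $\nu$ be the optimal control of Proposition \ref{prop:backmat} for the investment problem started at $X^x_\tau$ with inventory $c$, run from time $\tau$ onwards: explicitly $\nu_t=\big[\,g_*\big(\inf_{\tau\le s\le t}X^x_s\big)-c\,\big]^+$ when $\hat c<0$, and $\nu_t=(1-c)\mathds{1}_{\{t>\tau+\tau_*\}}$ with $\tau_*:=\inf\{u\ge0:X^x_{\tau+u}\ge\gamma_*(c)\}$ when $\hat c>1$. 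One checks directly that in each case $\nu$ is nondecreasing, left-continuous, $(\cF_t)$-adapted, satisfies $c+\nu_t\le1$ and $\nu_\tau=0$, so $\nu\in\cS^c_\tau$; then, repeating the change of variable and the conditioning on $\cF_\tau$ as above and using that this shifted control attains the value $U(X^x_\tau,c)$ by Proposition \ref{prop:backmat}, one obtains $\cJ_{x,c}(\tau,\nu)=\EE[e^{-\lambda\tau}(U(X^x_\tau,c)-P_0)]$. Minimising over $\tau$ and combining with ``$\ge$'' proves \eqref{dec01}.

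The main obstacle is the conditioning step in ``$\ge$'': one must make rigorous that, conditionally on $\cF_\tau$, the shifted dynamics $(X^x_{\tau+u})_{u\ge0}$ driven by $(B_{\tau+u}-B_\tau)_{u\ge0}$ together with the shifted control $\tilde\nu$ form an admissible instance of the problem defining $U$ with the random initial datum $X^x_\tau$, so that the conditional cost dominates $U(X^x_\tau,c)$. This calls for working with the shifted filtration $(\cF_{\tau+u})_{u\ge0}$ and a regular-conditional-probability argument, together with the integrability estimates (relying on the linear growth of $U$ and standard moment bounds for $X^x$) that justify the conditioning and the change of variables. By comparison, checking that the shifted controls used for ``$\le$'' belong to $\cS^c_\tau$ (in particular left-continuity at $\tau$, where a jump is allowed) and computing the resulting cost are routine.
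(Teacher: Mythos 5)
Your proposal is correct in outline, but it follows a genuinely different route from the paper. Where you prove ``$\ge$'' by shifting time by $\tau$, conditioning on $\cF_\tau$, and invoking the strong Markov property to recognise the conditional cost as an admissible cost for the problem defining $U$ started at $X^x_\tau$, the paper instead applies It\^o's formula to $U$ (classical in case $i)$, generalised in case $ii)$, using the regularity from Proposition~\ref{prop:backmat}) and exploits the HJB inequality \eqref{HJB-U} to obtain the bound \eqref{DDP1} directly, with no conditioning. The paper in fact explicitly contrasts these two strategies just after the proof: a proof ``based on an application of the Dynamic Programming Principle'' is possible, ``although in that case it is well known that some delicate measurability issues should be addressed as well (see \cite{PhamGuo}, Appendix A).'' You correctly identify this as the main obstacle in your own argument; making it rigorous requires showing that, under a regular conditional probability given $\cF_\tau$, the triple $(X^x_\tau, (B_{\tau+u}-B_\tau)_u, \tilde\nu)$ is an admissible datum for the problem defining $U$ with the usual filtration, which is exactly the delicate point the paper sidesteps. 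The trade-off is clear: your approach is more robust (it does not rely on $U$ being a $C^{2,1}$ or $W^{2,1,\infty}_{loc}$ solution of a variational inequality and would extend to settings where such regularity is unavailable), while the paper's It\^o-based proof is shorter and avoids regular-conditional-probability technicalities entirely, at the cost of presupposing the smoothness results of \cite{DeAFeMo14}. Both proofs use the explicit optimal controls of Proposition~\ref{prop:backmat} for the ``$\le$'' direction, so that part is essentially identical. One small overclaim worth flagging: you assert $e^{-\lambda t}(U(X^x_t,c)-P_0)\to 0$ $\PP$-a.s., whereas the paper proves (and only needs) the weaker $\liminf_{t\uparrow\infty} e^{-\lambda t}|U(X^x_t,c)-P_0|=0$ $\PP$-a.s.\ via Fatou's lemma (Lemma~\ref{lem:convX}); the full a.s.\ limit does hold for the OU process by a law-of-the-iterated-logarithm argument, but it is unnecessary here and was not what the paper established.
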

\begin{proof}
Let us set
\begin{align}\label{eq:w}
w(x,c):=\inf_{\tau\ge0}\EE\Big[e^{-\lambda \tau}\Big(U(X^x_\tau,c)-P_0\Big)\Big],\qquad\text{for $(x,c)\in\RR\times[0,1]$.}
\end{align}
Thanks to the results of Proposition \ref{prop:backmat} we can apply It\^o's formula to $U$, in the classical sense in case $i)$ and in its generalised version (cf.~\cite[Ch.~8, Sec.~VIII.4, Thm.~4.1]{FlemingSoner}) in case $ii)$. In particular for an arbitrary stopping time $\tau$, an arbitrary admissible control $\nu\in\cS^c_\tau$ and with $\tau_n:=\tau\vee n$, $n\in\mathbb{N}$ we get 
\begin{align}\label{itoU}
\EE\Big[e^{-\lambda\tau_n}U(X^x_{\tau_n},C^{c,\nu}_{\tau_n})\Big] = {} & \EE\Big[ e^{-\lambda\tau}U(X^x_{\tau},c)\Big]+\EE\bigg[\int^{\tau_n}_\tau{e^{-\lambda t}\big(\mathbb{L}_XU-\lambda U\big)(X^x_t,C^{c,\nu}_t)dt}\bigg] \nonumber \\
&+\EE\bigg[\int_{\tau}^{\tau_n}{e^{-\lambda t}U_c(X^x_t,C^{c,\nu}_t)d\nu^{cont}_t}\bigg]\nonumber\\
&+\EE\bigg[\sum_{\tau\le t<\tau_n}e^{-\lambda t}\Big(U(X^x_t,C^{c,\nu}_{t+})-U(X^x_t,C^{c,\nu}_{t})\Big)\bigg],
\end{align}
where we have used standard localisation techniques to remove the martingale term, and decomposed the control into its continuous and jump parts, i.e.~$d\nu_t=d\nu_t^{cont}+\Delta\nu_t$, with $\Delta\nu_t:=\nu_{t+}-\nu_t$. Since $U$ solves the HJB equation \eqref{HJB-U} it is now easy to prove (cf.~for instance \cite[Thm.~2.8]{DeAFeMo14}) that, in the limit as $n\to\infty$, one has
\begin{align}\label{DDP1}
\EE\Big[e^{-\lambda\tau}U(X^x_{\tau},c)\Big]\le \EE\Big[\int^\infty_{\tau}{e^{-\lambda t}\lambda X^x_t\Phi(C^{c,\nu}_t)dt}+\int^{\infty}_\tau{ e^{-\lambda t}X^x_t d\nu_t}\Big],
\end{align}
and therefore
\begin{align}\label{deco01}
\EE \Big[e^{-\lambda\tau}\big(U(X^x_\tau,c)-P_0\big)\Big]\le\EE\Big[\int^\infty_{\tau}{e^{-\lambda t}\lambda X^x_t\Phi(C^{c,\nu}_t)dt}+\int^{\infty}_\tau{ e^{-\lambda t}X^x_t d\nu_t}-e^{-\lambda\tau}P_0\Big],
\end{align}
for an arbitrary stopping time $\tau$ and an arbitrary control $\nu\in\cS^c_\tau$. Hence by taking the infimum over all possible stopping times and over all $\nu\in\cS^c_\tau$, \eqref{Vfun0}, \eqref{eq:w} and \eqref{deco01} give $w(x,c)\le V(x,c)$.

To prove that equality holds, let us fix an arbitrary stopping time $\tau$. In case $i)$ of Proposition \ref{prop:backmat}, one can pick a control $\nu^\tau\in\cS^c_\tau$ of the form
\begin{align}\label{eq:nu-i}
\text{$\nu^\tau_t=0$ for $t\le\tau$ \,\, and \,\, $\nu^\tau_t=\nu^*_t$ for $t>\tau$}
\end{align}
with $\nu^*$ as in \eqref{optcontr00}, to obtain equality in \eqref{DDP1} and hence in \eqref{deco01}. In case $ii)$ instead we define $\sigma^*_\tau:=\inf\{t\ge\tau\,:\,X^x_t\ge\gamma_*(c)\}$ and pick $\nu^\tau\in\cS^c_\tau$ of the form
\begin{align}\label{eq:nu-ii}
\text{$\nu^\tau_t=0$ for $t\le\sigma^*_\tau$ \,\, and \,\, $\nu^\tau_t=1-c$ for $t>\sigma^*_\tau$}
\end{align}
to have again equality in \eqref{DDP1} and hence in \eqref{deco01}. Now taking the infimum over all $\tau$ we find $w(x,c)\ge V(x,c)$.

To complete the proof we need to prove the last claim; that is, $\liminf_{t \uparrow \infty} e^{-\lambda t}(U(X^x_t,c)-P_0) =0$ a.s. It suffices to show that $\liminf_{t \uparrow \infty} e^{-\lambda t}|U(X^x_t,c)-P_0| =0$ a.s. To this end recall that $|U(x,c)|\le C(1+|x|)$, for $(x,c)\in\RR\times[0,1]$ and a suitable constant $C>0$ (cf.\ Proposition \ref{prop:backmat}), and then apply Lemma \ref{lem:convX} in Appendix \ref{factsOU}.
\end{proof}

\begin{remark}\label{rem:obvssol}
	The optimal stopping problems \eqref{dec01} depend only parametrically on the inventory level $c$ (the case $c=1$ is trivial as $U(\,\cdot\,,1)=0$ on $\RR$ and the optimal strategy is to stop at once for all initial points $x\in\RR$).
\end{remark}

It is worth noting that we were able to perform a very simple proof of the decoupling knowing the structure of the optimal control for problem \eqref{def:U}. In wider generality one could obtain a proof based on an application of the \emph{Dynamic Programming Principle} although in that case it is well known that some delicate measurability issues should be addressed as well (see \cite{PhamGuo}, Appendix A).
Although each of the optimal stopping problems \eqref{dec01} is for a one-dimensional diffusion over an infinite time horizon, standard methods find only limited application since no explicit expression is available for their gain function $U(x,c)-P_0$. 

In the next section we show that the cases $\hat{c}<0$ and $\hat{c}>1$, which are the regimes solved rigorously in \cite{DeAFeMo14}, have substantially different optimal entry policies.
To conclude with the background we prove a useful concavity result.
\begin{lemma}\label{lem:concave}
The maps $x\mapsto U(x,c)$ and $x\mapsto V(x,c)$ are concave for fixed $c\in[0,1]$.
\end{lemma}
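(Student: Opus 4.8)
The plan is to prove concavity of $x \mapsto U(x,c)$ first, and then deduce concavity of $x \mapsto V(x,c)$ essentially for free from the representation \eqref{dec01}, since the infimum of concave functions is concave. For the harder statement about $U$, I would exploit the linearity of the Ornstein-Uhlenbeck dynamics in the initial datum $x$: for $x_1, x_2 \in \RR$ and $\alpha \in [0,1]$, writing $x_\alpha := \alpha x_1 + (1-\alpha) x_2$, the strong solution satisfies $X^{x_\alpha}_t = \alpha X^{x_1}_t + (1-\alpha) X^{x_2}_t$ pathwise (this is immediate from \eqref{OU}, as the difference of two OU solutions with the same noise solves a deterministic linear ODE, so $X^{x}_t = x e^{-\theta t} + (\text{term independent of } x)$). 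Thus $x \mapsto X^x_t(\omega)$ is affine for every $t$ and $\omega$.

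Next I would fix an admissible control $\nu \in \cS^c_0$ and observe that the same $\nu$ is admissible for all three initial prices $x_1, x_2, x_\alpha$ (the set $\cS^c_0$ does not depend on $x$), and moreover the inventory process $C^{c,\nu}_t = c + \nu_t$ does not depend on $x$. Then in the cost functional \eqref{def:J0},
\begin{align*}
\cJ^0_{x_\alpha,c}(\nu) = \EE\Big[\int_0^\infty e^{-\lambda t} X^{x_\alpha}_t \, d\nu_t + \int_0^\infty e^{-\lambda t}\lambda X^{x_\alpha}_t \Phi(C^{c,\nu}_t)\, dt\Big].
\end{align*}
Using $X^{x_\alpha}_t = \alpha X^{x_1}_t + (1-\alpha)X^{x_2}_t$, both integrands are affine in $x$, so $\cJ^0_{x_\alpha,c}(\nu) = \alpha\,\cJ^0_{x_1,c}(\nu) + (1-\alpha)\,\cJ^0_{x_2,c}(\nu)$; in particular $\cJ^0_{\cdot,c}(\nu)$ is (affine, hence) concave in $x$ for each fixed $\nu$. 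Taking the infimum over $\nu \in \cS^c_0$ preserves concavity, giving $U(x_\alpha,c) \ge \alpha U(x_1,c) + (1-\alpha)U(x_2,c)$. For $V$, apply the same reasoning directly to \eqref{dec01}: fix a stopping time $\tau$, note $e^{-\lambda\tau}(U(X^x_\tau,c)-P_0)$ is concave in $x$ because $U(\cdot,c)$ is concave, $x \mapsto X^x_\tau$ is affine, composition of a concave function with an affine map is concave, and multiplication by the nonnegative $\cF_\tau$-measurable random variable $e^{-\lambda\tau}$ and taking expectation both preserve concavity; then take the infimum over $\tau$.

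The main technical point to be careful about is integrability and the interchange of expectation with the affine combination — but this is not a real obstacle, since Proposition \ref{prop:backmat} already guarantees the finiteness of $U$ (indeed $|U(x,c)| \le C(1+|x|)$) and the construction in \cite{DeAFeMo14} ensures the relevant integrals are well-defined for admissible controls; the affine identity $\cJ^0_{x_\alpha,c}(\nu) = \alpha\cJ^0_{x_1,c}(\nu) + (1-\alpha)\cJ^0_{x_2,c}(\nu)$ holds in $\RR \cup \{+\infty\}$ regardless. One should also handle the degenerate endpoint $c=1$ (trivial, $U(\cdot,1)\equiv 0$) and the convention on $\{\tau=\infty\}$ in \eqref{dec01}, where the integrand is set to $0$ and concavity is preserved trivially. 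I do not expect any serious difficulty; the entire argument rests on the affine dependence of $X^x$ on $x$ together with the $x$-independence of the admissible set and the inventory.
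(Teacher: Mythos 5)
Your proof is correct and follows essentially the same route as the paper: both exploit the affine dependence $X^{px+(1-p)y}_t = pX^x_t + (1-p)X^y_t$ to show that $\cJ^0_{\cdot,c}(\nu)$ is affine in $x$ for each fixed control, infimize over $\nu$ to get concavity of $U$, and then for $V$ combine concavity of $U(\cdot,c)$ with the affine map $x \mapsto X^x_\tau$ and infimize over stopping times. The extra remarks you make about integrability, the $c=1$ endpoint, and the $\{\tau=\infty\}$ convention are sensible hygiene but do not change the argument.
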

\begin{proof}
We begin by observing that $X^{px+(1-p)y}_t=pX^x_t+(1-p)X^y_t$ for all $t\ge0$ and any $p\in(0,1)$. Hence \eqref{def:J0} gives
\begin{align*}
\cJ^0_{px+(1-p)y,c}(\nu)=p\cJ^0_{x,c}(\nu)+(1-p)\cJ^0_{y,c}(\nu)\ge p U(x,c)+(1-p)U(y,c), \qquad \forall\nu\in\cS^c_0
\end{align*}
and therefore taking the infimum over all admissible $\nu$ we easily find $U(px+(1-p)y,c)\ge p U(x,c)+(1-p)U(y,c)$ as claimed.

For $V$ we argue in a similar way and use concavity of $U(\,\cdot\,,c)$ as follows: let $\tau\ge0$ be an arbitrary stopping time, then
\begin{align*}
\EE\Big[e^{-\lambda \tau}\Big(U(X^{px+(1-p)y}_\tau,c)-P_0\Big)\Big]
= {} & \EE\Big[e^{-\lambda \tau}\Big(U(pX^{x}_\tau+(1-p)X^y_\tau,c)-P_0\Big)\Big]\\[+4pt]
\ge {} & \EE\Big[e^{-\lambda \tau}\Big(pU(X^{x}_\tau,c)+(1-p)U(X^{y}_\tau,c)-P_0\Big)\Big]\\[+4pt]
= {} & p \times \EE\Big[e^{-\lambda \tau}\Big(U(X^{x}_\tau,c)-P_0\Big)\Big] \\
& +(1-p) \times \EE\Big[e^{-\lambda \tau}\Big(U(X^{y}_\tau,c)-P_0\Big)\Big]\\[+4pt]
\ge {} & \,\,p\,V(x,c)+(1-p)V(y,c).
\end{align*}
We conclude the proof by taking the infimum over all stopping times $\tau\ge0$.

\end{proof}

\section{Timing the Entry Decision}
\label{sec:Timing}
\label{sec:heuristics}

We first examine the optimal entry policy via a standard argument based on exit times from small intervals of $\RR$. 
An application of Dynkin's formula gives that the instantaneous `cost of continuation' in our optimal entry problem is given by the function 
\begin{equation}\label{eq:inst}
\mathcal{L}(x,c)+\lambda P_0:=(\mathbb{L}_X-\lambda)(U-P_0)(x,c).
\end{equation}
In the case $\hat c < 0$, which is covered in Section \ref{sec:chatneg}, the function \eqref{eq:inst} is monotone decreasing (see \eqref{signLU} below). Since problem \eqref{vfun00} is one of minimisation, it is never optimal to stop at points $(x,c)\in\RR\times[0,1]$ such that $\mathcal{L}(x,c)+\lambda P_0<0$; an easy comparison argument then shows there is a unique lower threshold that determines the optimal stopping rule in this case.

When $\hat{c}>1$ the picture is more complex. The function \eqref{eq:inst} is decreasing and continuous everywhere except at a single point where it has a positive jump (cf. Proposition \ref{prop:discont} below) and so can change sign twice. The comparison argument now becomes more subtle: continuation should not be optimal when the function \eqref{eq:inst} is positive in a `large neighbourhood containing the initial value $x$'. Indeed it will turn out in Section \ref{sec:chatgo} that there are multiple possible optimal stopping regimes depending on parameter values. In particular the continuation region of the optimal stopping problem may be disconnected, which 
is unusual in the literature on optimal entry problems. The resulting optimal entry region has a kinked shape (Figure \ref{fig:examples}). The jump in the function \eqref{eq:inst} arises from the `bang-bang' nature of the optimal investment plan when $\hat c > 1$, and so this may be understood as causing the unusual shape of the optimal entry boundary.

\begin{figure}[htbp]
\begin{center}
\includegraphics[scale=0.5]{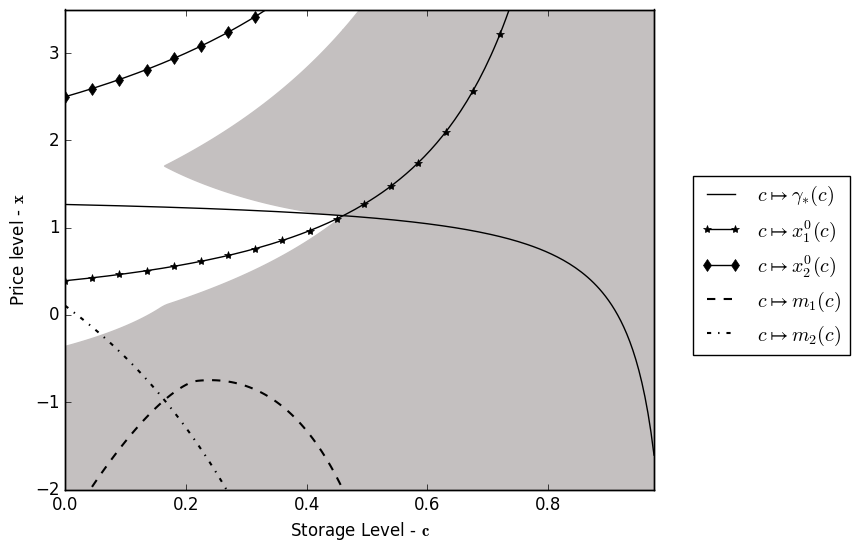}
\caption{An indicative example of an optimal entry region (shaded) when $\hat c > 1$, together with the functions $\gamma_*$ and $x^0_1$, $x^0_2$ (introduced in Prop.~\ref{prop:discont} below). The functions $m_{1}$ and $m_{2}$ (not drawn to scale) are important determinants for the presence of the kinked shape (see Remark~\ref{rem:red} below). This plot was generated using $\mu = 1$, $\theta = 1$, $\sigma = 3$, $\lambda = 1$, $P_{0} = 4$ and $\Phi(c) = 2.2(1 - c) + 8(1 - c)^{2}$.}
\label{fig:examples}
\end{center}
\end{figure}

\subsection{The case $\hat{c}<0$}\label{sec:chatneg}
Let us now assume that $\hat{c}<0$, i.e.~$k(c)>0$ for all $c\in[0,1]$ (cf.~\eqref{def-k}). We first recall from Section 2.2 of \cite{DeAFeMo14} that in this case
\begin{align}\label{eq:U&u}
U(x,c)=x(1-c)-\int_c^1{u(x;y)dy},\qquad\text{for $(x,c)\in\RR\times[0,1]$,}
\end{align}
where $u$ is the value function of an associated optimal stopping problem with (cf. Sections 2.1 and 2.2 of \cite{DeAFeMo14})
\begin{align}
\label{eq:u01}& \text{$(i)$~~$u(\,\cdot\,,c)\in W^{2,\infty}_{loc}(\RR)$ for any $c\in[0,1]$}\\[+3pt]
\label{eq:u02}& \text{$(ii)$~$u(x,c)> 0$ for $x>\beta_*(c)$ and $u(x,c)=0$ for $x\le\beta_*(c)$, $c\in[0,1],$}
\end{align}
and with $\beta_*$ given as in Proposition \ref{prop:backmat}-$i)$. Moreover, defining
\begin{align}\label{def:G}
G(x,c) := \frac{\mu(k(c)-\theta)}{\lambda} + \frac{k(c)(x-\mu)}{\lambda+\theta},
\end{align}
and recalling $\phi_{\lambda}$ from Definition~\ref{def:FundamentalSolutions}, $u$ is expressed analytically as
\begin{align}\label{def:u-analyt}
u(x,c)=\left\{\begin{array}{ll}
G(x,c)-\frac{G({\beta_*(c)},c)}{\phi_{\lambda}({\beta_*(c)})}\phi_{\lambda}(x), & x > {\beta_*(c)} \\ [+4pt]
0, & x \leq {\beta_*(c)}\end{array}
\right.
\end{align}
for $c\in[0,1]$, and it solves the variational problem
\begin{align}
&\big(\LL_X-\lambda \big)u(x,c)=\theta\mu-k(c)x & x>\beta_*(c),\,c\in[0,1]\\[+3pt]
&\big(\LL_X-\lambda \big)u(x,c)=0 & x\le \beta_*(c),\,c\in[0,1]\\[+3pt]
& u(\beta_*(c),c)=u_x(\beta_*(c),c)=0 & c\in[0,1].
\end{align}
By the regularity of $u$ and dominated convergence we have
\begin{equation}\label{eq:LXU}
(\mathbb{L}_X-\lambda )U(x,c)=(1-c)(\theta\mu-(\lambda+\theta)x)-\int_c^1(\mathbb{L}_X-\lambda)u(x;y)dy
\end{equation}
for $(x,c)\in\RR\times[0,1]$.

As is usual, for each $c\in[0,1]$ we define the continuation region $\CC^c_V$ and stopping region $\DD^c_V$ for the optimal stopping problem \eqref{dec01} as
\begin{align}\label{def:CD}
\CC^c_V=\{x\in\RR\,:\,V(x,c) < U(x,c)-P_0\}\,,\quad\DD^c_V=\{x\in\RR\,:\,V(x,c) = U(x,c)-P_0\}.
\end{align}
With the aim of characterising the geometry of $\CC^c_V$ and $\DD^c_V$ we start by providing some preliminary results on $U-P_0$ that will help to formulate an appropriate free-boundary problem for $V$.
\begin{prop}
\label{prop-sign-LU}
For any given $c\in[0,1]$, there exists a unique $x^0(c)\in\mathbb{R}$ such that
\begin{align}
\label{signLU}
\big(\mathbb{L}_X-\lambda\big)\big(U(x,c)-P_0)\left\{
\begin{array}{ll}
<0 & \text{for $x>x^0(c)$}\\[+3pt]
=0 & \text{for $x=x^0(c)$}\\[+3pt]
>0 & \text{for $x<x^0(c)$}
\end{array}
\right.
\end{align}
\end{prop}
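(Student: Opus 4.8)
The plan is to analyze the sign of the function $\mathcal{L}(x,c)+\lambda P_0 = (\mathbb{L}_X-\lambda)(U-P_0)(x,c)$ using the explicit representation \eqref{eq:LXU}, showing it is strictly decreasing in $x$ for each fixed $c$ and then using continuity plus asymptotic behaviour to conclude that it has a unique zero. First I would substitute $P_0$ into \eqref{eq:LXU}, writing $(\mathbb{L}_X-\lambda)(U(x,c)-P_0) = (1-c)(\theta\mu-(\lambda+\theta)x) - \int_c^1 (\mathbb{L}_X-\lambda)u(x;y)\,dy + \lambda P_0$. On the region $\{x>\beta_*(y)\}$ we know from the variational problem that $(\mathbb{L}_X-\lambda)u(x;y) = \theta\mu - k(y)x$, while on $\{x\le\beta_*(y)\}$ it equals $0$. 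Since $\beta_*$ is decreasing, for fixed $x$ the set of $y\in[c,1]$ for which $x>\beta_*(y)$ is an interval of the form $[\max(c,\beta_*^{-1}(x)),1]$ (with the obvious conventions at the endpoints), so the integral is a sum of explicit pieces.

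Next I would differentiate in $x$. On each of the smooth pieces the $x$-derivative of the integrand is either $0$ (where $u\equiv 0$) or $-k(y)$ (where the PDE holds), and since $\hat c<0$ forces $k(y)>0$ throughout $[0,1]$, we get $\partial_x\!\int_c^1 (\mathbb{L}_X-\lambda)u(x;y)\,dy = -\int_{\{y:\,x>\beta_*(y)\}} k(y)\,dy \le 0$; the boundary terms from differentiating the variable limit vanish because $u$ and $u_x$ (hence $(\mathbb{L}_X-\lambda)u$, by the smooth-fit and continuity of the PDE data across $\beta_*$) match to zero at $x=\beta_*(y)$. Meanwhile $\partial_x[(1-c)(\theta\mu-(\lambda+\theta)x)] = -(1-c)(\lambda+\theta) < 0$ for $c<1$. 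Adding these, $x\mapsto (\mathbb{L}_X-\lambda)(U(x,c)-P_0)$ is strictly decreasing for $c\in[0,1)$; the degenerate case $c=1$ gives $U(\,\cdot\,,1)\equiv 0$ so the expression is the constant $\lambda P_0>0$ minus — wait, one must treat $c=1$ by noting $(\mathbb{L}_X-\lambda)(U(x,1)-P_0) = -\lambda(-P_0)=\lambda P_0$, which is a positive constant, so strictly speaking the claimed unique $x^0(1)$ would be $+\infty$; I would either restrict to $c\in[0,1)$ or handle this edge case explicitly in a remark, consistent with Remark \ref{rem:obvssol}.

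Finally, to get existence and uniqueness of the zero I combine strict monotonicity with the limits as $x\to\pm\infty$. As $x\to+\infty$ the dominant term is $-(1-c)(\lambda+\theta)x + \int x\,k(y)\,dy$-type growth; using the concavity/linear-growth bound $|U(x,c)|\le C(1+|x|)$ from Proposition \ref{prop:backmat} together with the fact that for $x$ large enough $x>\beta_*(0)$ so the whole interval $[c,1]$ is in the PDE region, the expression becomes an explicit affine function of $x$ with slope $-(1-c)(\lambda+\theta)+\int_c^1 k(y)\,dy = -(1-c)(\lambda+\theta)+\zeta(c)\cdot(\text{sign})$ — more directly, slope $= -(1-c)(\lambda+\theta) + \int_c^1 k(y)\,dy$, and since $k(y)=\lambda+\theta+\lambda\Phi'(y)$ with $\Phi'<0$ on $[0,1)$ this slope is $\int_c^1\lambda\Phi'(y)\,dy = \lambda(\Phi(1)-\Phi(c)) = -\lambda\Phi(c) < 0$ (using $\Phi(1)=0$), so the function $\to-\infty$; symmetrically, for $x$ very negative we are in the region $x<\beta_*(1)$ so all of $u(x;y)=0$ and the expression reduces to $(1-c)(\theta\mu-(\lambda+\theta)x)+\lambda P_0 \to +\infty$. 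By the intermediate value theorem there is a zero, and by strict monotonicity it is unique; call it $x^0(c)$, and the three-way sign statement \eqref{signLU} follows immediately. The main obstacle I anticipate is the careful bookkeeping at the moving boundary $y\mapsto\beta_*(y)$ inside the integral: one must justify differentiation under the integral sign and the vanishing of the boundary contributions, which relies on the $W^{2,\infty}_{loc}$ regularity of $u(\,\cdot\,,c)$ in \eqref{eq:u01}, the smooth-fit conditions $u(\beta_*(c),c)=u_x(\beta_*(c),c)=0$, and continuity of $(\mathbb{L}_X-\lambda)u$ across the boundary (equivalently, that $\theta\mu-k(y)\beta_*(y)=0$ would be needed for $C^1$-matching of the PDE value, but in fact only the weaker fact that the jump in $(\mathbb{L}_X-\lambda)u$ integrates harmlessly is required since it sits under an integral in $y$).
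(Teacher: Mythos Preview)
Your strategy is the paper's: compute $\mathcal{L}(x,c):=(\mathbb{L}_X-\lambda)U(x,c)$ from \eqref{eq:LXU}, show it is continuous and strictly decreasing in $x$, and conclude via the limits at $\pm\infty$. The asymptotics and the $c=1$ caveat are handled correctly. However, two steps in the monotonicity argument are wrong as written.

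\textbf{The boundary term does not vanish.} With $F(x):=\int_c^1(\mathbb{L}_X-\lambda)u(x;y)\,dy=\int_{c\vee g_*(x)}^1(\theta\mu-k(y)x)\,dy$, Leibniz gives, for $\beta_*(1)<x<\beta_*(c)$,
\[
F'(x)=-\int_{g_*(x)}^1 k(y)\,dy\;-\;(\theta\mu-k(g_*(x))\,x)\,g_*'(x).
\]
Smooth fit forces $u=u_x=0$ at $\beta_*(y)$, but $(\mathbb{L}_X-\lambda)u$ involves $u_{xx}$, which jumps there; hence $(\mathbb{L}_X-\lambda)u(\beta_*(y)^+,y)=\theta\mu-k(y)\beta_*(y)\neq 0$ and the second term above survives. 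It does \emph{not} ``integrate harmlessly''. This is exactly the term $[\theta\mu-k(g_*(x))x]\,g_*'(x)$ in the paper's \eqref{prop-Lx02}; its sign is right (use $\beta_*\le\hat x_0$ from Proposition~\ref{prop:backmat} to get $\theta\mu-k(g_*(x))x>0$, and $g_*'\le 0$), so including it only strengthens the decrease.

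\textbf{The ``adding these'' step is a sign slip.} In \eqref{eq:LXU} the integral enters with a \emph{minus} sign, so your computation $\partial_x\int(\mathbb{L}_X-\lambda)u\,dy\le 0$ contributes $+\int k(y)\,dy\ge 0$ to $\partial_x\mathcal{L}$, not a negative term. You cannot conclude $\partial_x\mathcal{L}<0$ just by noting both pieces are $\le 0$; you must actually combine them. Doing so gives, for $\beta_*(1)<x<\beta_*(c)$,
\[
\partial_x\mathcal{L}(x,c)=-(\lambda+\theta)(g_*(x)-c)-\lambda\Phi(g_*(x))+(\theta\mu-k(g_*(x))x)\,g_*'(x),
\]
which is indeed negative because $g_*(x)>c$ there and the boundary term is $\le 0$; the regions $x>\beta_*(c)$ and $x<\beta_*(1)$ give $-\lambda\Phi(c)$ and $-(1-c)(\lambda+\theta)$ respectively, matching \eqref{prop-Lx01}--\eqref{prop-Lx03}. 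Once these two points are fixed your argument is complete and coincides with the paper's.
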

\noindent We refer to Appendix \ref{someproofs} for the proof of the previous proposition.

As discussed at the beginning of Section \ref{sec:heuristics}, it is never optimal in problem \eqref{dec01} to stop in $(x^0(c),\infty)$, $c\in[0,1]$, for $x^0(c)$ as in Proposition \ref{prop-sign-LU}, i.e.
\begin{align}\label{eq:inclC}
(x^0(c),\infty)\subseteq \CC^c_V\qquad\text{for $c\in[0,1]$,}
\end{align}
and consequently 
\begin{align}\label{eq:inclD}
\DD^c_V \subset [-\infty,x^0(c)]\qquad\text{for $c\in[0,1]$.}
\end{align}
Hence we conjecture that the optimal stopping strategy should be of single threshold type.
In what follows we aim at finding $\ell_*(c)$, $c\in[0,1]$, such that $\DD^c_V=[-\infty,\ell_*(c)]$ and
\begin{align}\label{opt-st02}
\tau^*(x,c) = \inf \{ t \geq 0: X^x_t \leq \ell_*(c)\}
\end{align}
is optimal for $V(x,c)$ in \eqref{dec01} with $(x,c)\in\RR\times[0,1]$. The methodology adopted in \cite[Sec.~2.1]{DeAFeMo14} does not apply directly to this problem due to the semi-explicit expression of the gain function $U-P_0$.

\subsubsection{Formulation of Auxiliary Optimal Stopping Problems}
\label{sec:aux1}

To work out the optimal boundary $\ell_*$ we will introduce auxiliary optimal stopping problems and employ a \emph{guess-and-verify} approach in two frameworks with differing technical issues. We first observe that since $U$ is a classical solution of \eqref{HJB-U}, an application of Dynkin's formula to \eqref{dec01} provides a lower bound for $V$, that is
\begin{align}\label{aux01}
V(x,c) \geq U(x,c)-P_0 + \Gamma(x,c),\qquad (x,c)\in\RR\times[0,1],
\end{align}
with
\begin{align}\label{def-gamma}
\Gamma(x,c):=\inf_{\tau \geq 0}
 \EE\bigg[ \int_0^{\tau}e^{-\lambda s}\big(\lambda P_0-\lambda X^x_s \Phi(c)\big)ds
\bigg]\qquad (x,c)\in\RR\times[0,1].
\end{align}
On the other hand, for $(x,c)\in\RR\times[0,1]$ fixed, set $\sigma^*_\beta:=\inf\{t\ge0\,:\,X^x_t\le\beta_*(c)\}$ with $\beta_*$ as in Proposition \ref{prop:backmat}, then for arbitrary stopping time $\tau$ one also obtains
\begin{align}\label{eq:Vupbdd00}
\EE&\Big[e^{-\lambda(\tau\wedge\sigma^*_\beta)}\Big(U(X^x_{\tau\wedge\sigma^*_\beta},c)-P_0\Big)\Big]\\
&=U(x,c)-P_0+\EE\bigg[ \int_0^{\tau \wedge \sigma^*_\beta}e^{-\lambda s}\big(\lambda P_0-\lambda X^x_s \Phi(c)\big)ds
\bigg]\nonumber
\end{align}
by using the fact that $U$ solves \eqref{HJB-U} and Dynkin's formula. We can now obtain an upper bound for $V$ by setting
\begin{align}\label{def-gammab}
\Gamma_\beta(x,c):=\inf_{\tau \geq 0}\EE\bigg[ \int_0^{\tau \wedge \sigma^*_\beta}e^{-\lambda s}\big(\lambda P_0-\lambda X^x_s \Phi(c)\big)ds
\bigg],\quad(x,c)\in\RR\times[0,1],
\end{align}
so that taking the infimum over all $\tau$ in \eqref{eq:Vupbdd00} one obtains
\begin{align}\label{aux02}
V(x,c) &\leq U(x,c)-P_0 +  \Gamma_\beta(x,c)\qquad(x,c)\in\RR\times[0,1].
\end{align}

It turns out that \eqref{aux01} and \eqref{aux02} allow us to find a simple characterisation of the optimal boundary $\ell_*$ and of the function $V$ in some cases. Let us first observe that $0 \geq \Gamma_\beta(x,c) \geq \Gamma (x,c)$ for all $(x,c) \in \mathbb{R} \times [0,1]$. Defining for each fixed $c \in [0,1]$ the stopping regions
\beas
\DD^c_{\Gamma}=\{x\in\RR\,:\, \Gamma (x,c)=0\} \qquad\text{and}\qquad
\DD^c_{\Gamma_\beta}=\{x\in\RR\,:\, \Gamma_\beta (x,c)=0\}
\eeas
it is easy to see that $\DD^c_{\Gamma} \subset \DD^c_{\Gamma_\beta}$. Moreover, by the monotonicity of $x\mapsto X^x_\cdot$ it is not hard to verify that $x \mapsto \Gamma(x,c)$ and $x\mapsto\Gamma_\beta(x,c)$ are decreasing. Hence we again expect optimal stopping strategies of threshold type, i.e.
\begin{align}\label{def:Dg}
\DD^c_{\Gamma}=\{x\in\RR\,:\, x\le\alpha^*_1(c)\} \qquad\text{and}\qquad
\DD^c_{\Gamma_\beta}=\{x\in\RR\,:\, x\le \alpha^*_2(c)\}
\end{align}
for $c\in[0,1]$ and for suitable functions $\alpha^*_i(\,\cdot\,)$, $i=1,2$ to be determined.

Assume for now that $\alpha^*_1$ and $\alpha^*_2$ are indeed optimal, then we must have
\begin{align}\label{eq:bineq}
\alpha^*_1(c) \leq \ell_*(c) \leq \alpha_2^*(c)\qquad\text{for $c\in[0,1]$}.
\end{align}
Indeed, for all $(x,c) \in \RR\times[0,1]$ we have $\DD^c_{\Gamma} \subset \DD^c_{V}$ since $\Gamma (x,c) \leq V(x,c)-U(x,c)+P_0 \leq 0$, and $\DD^c_{V} \subset \DD^c_{\Gamma_\beta}$ since $V(x,c)-U(x,c)+P_0 \leq \Gamma_\beta (x,c) \le 0$. Notice also that since the optimisation problem in \eqref{def-gammab} is the same as the one in \eqref{def-gamma} except that in the former the observation is stopped when $X$ hits $\beta_*$, we must have
\begin{equation}\label{eq:ThresholdStrategyProof}
\alpha^*_2(c)=\beta_*(c)\vee \alpha_1^*(c) \quad \text{for} \enskip c\in[0,1].
\end{equation}
Thus for each $c\in[0,1]$ we can now consider two cases:
\begin{enumerate}
\item if $\alpha_1^*(c) > \beta_*(c)$ we have $\Gamma (x,c)= \Gamma_\beta(x,c)=\big(V-U+P_0\big)(x,c)$ for $x\in\RR$ and $\ell_*(c) = \alpha_1^*(c)$,\label{gamma-c02}
\item if $\alpha_1^*(c) \le \beta_*(c)$ we have
$\alpha^*_2(c)=\beta_*(c)$, implying that $\ell_*(c) \leq \beta_*(c)$.\label{gamma-c01}
\end{enumerate}

Both $1.$\ and $2.$\ above need to be studied in order to obtain a complete characterisation of $\ell_*$, however we note that case $1.$\ is particularly interesting as it identifies $V$ and $\ell_*$ with $\Gamma+U-P_0$ and $\alpha^*_1$, respectively. As we will clarify in what follows, solving problem \eqref{def-gamma} turns out to be theoretically simpler and computationally less demanding than dealing directly with problem \eqref{dec01}.

\subsubsection{Solution of the Auxiliary Optimal Stopping Problems}
\label{sec:solaux}

To make our claims rigorous we start by analysing problem \eqref{def-gamma}. This is accomplished by largely relying on arguments already employed in \cite[Sec.~2.1]{DeAFeMo14} and therefore we omit proofs here whenever a precise reference can be provided. Moreover, the majority of the proofs of new results are provided in Appendix \ref{someproofs} to simplify the exposition. Let us now introduce two functions $\phi_{\lambda}$ and $\psi_\lambda$ which feature frequently below.
\begin{definition}\label{def:FundamentalSolutions}
	Let $\phi_{\lambda} : \RR\to\RR^+$ and $\psi_\lambda:\RR\to\RR^+$ denote respectively the decreasing and increasing fundamental solutions of the differential equation $\mathbb{L}_Xf=\lambda f$ on $\RR$ (see Appendix \ref{factsOU} for details).
\end{definition}

In problem \eqref{def-gamma} we conjecture an optimal stopping time of the form
\begin{align}
\tau_\alpha(x,c) := \inf \{ t \geq 0\,:\, X^x_t \leq \alpha(c)\}
\end{align}
for $(x,c)\in\RR\times[0,1]$ and $\alpha$ to be determined. Under this conjecture $\Gamma$ should be found in the class of functions of the form
\beq\label{gammaeq}
\Gamma^{\alpha}(x,c)=\left\{\begin{array}{ll}
\displaystyle \EE\bigg[ \int_0^{\tau_{\alpha}}e^{-\lambda s}\lambda\big(P_0- X^x_s \Phi(c)\big)ds\bigg], & x > \alpha(c) \\[+3pt]
0, & x \leq \alpha(c)\end{array}
\right.
\eeq
for each $c\in[0,1]$. Now, repeating the same arguments of proof of \cite[Thm.~2.1]{DeAFeMo14} we obtain
\begin{lemma}\label{lem:candgamma}
One has
\begin{align}\label{eq:analitg}
\Gamma^{\alpha}(x,c)=\left\{\begin{array}{ll}
\big(P_0-\hat{G}(x,c)\big)-\big(
P_0-\hat{G}(\alpha(c),c)
\big)\frac{\phi_{\lambda}(x)}{\phi_{\lambda}(\alpha(c))}, & x > \alpha(c) \\[+5pt]
0, & x \leq \alpha(c)\end{array}
\right.
\end{align}
for each $c\in[0,1]$, with
\begin{align}\label{def-G-hat}
\hat{G}(x,c):=\mu\Phi(c)+(x-\mu)\tfrac{\lambda\Phi(c)}{\lambda+\theta}\qquad(x,c)\in\RR\times[0,1].
\end{align}
\end{lemma}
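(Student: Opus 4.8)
The statement is an explicit solution formula for the candidate value $\Gamma^\alpha(\cdot,c)$ of the auxiliary stopping problem, given that one stops when $X$ reaches the level $\alpha(c)$. Since this is claimed to follow by ``repeating the same arguments of proof of \cite[Thm.~2.1]{DeAFeMo14}'', my plan is to reconstruct that computation directly. Fix $c\in[0,1]$ and work in the region $x>\alpha(c)$. By the strong Markov property and the definition of $\tau_\alpha$, the function $\Gamma^\alpha(\cdot,c)$ must satisfy the inhomogeneous ODE $(\mathbb L_X-\lambda)\Gamma^\alpha(x,c) = -\lambda\big(P_0-x\Phi(c)\big)$ on $(\alpha(c),\infty)$, together with the boundary condition $\Gamma^\alpha(\alpha(c),c)=0$ and an appropriate growth condition as $x\to+\infty$.

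\textbf{Step 1: a particular solution.} First I would look for a particular solution of $(\mathbb L_X-\lambda)f(x) = -\lambda P_0 + \lambda x\Phi(c)$ that is affine in $x$, i.e.\ $f(x)=a+bx$. Using $\mathbb L_X f(x) = \theta(\mu-x)b$ one gets $(\mathbb L_X-\lambda)(a+bx) = \theta\mu b - (\lambda+\theta)bx - \lambda a$. Matching the coefficient of $x$ forces $b=\lambda\Phi(c)/(\lambda+\theta)$, and matching the constant term then gives $a = \mu\Phi(c) - \mu\lambda\Phi(c)/(\lambda+\theta) = \mu\Phi(c)\theta/(\lambda+\theta)$. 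A quick rearrangement shows this particular solution is exactly $P_0 - \big(P_0-\hat G(x,c)\big)$ with $\hat G(x,c)=\mu\Phi(c)+(x-\mu)\tfrac{\lambda\Phi(c)}{\lambda+\theta}$ as in \eqref{def-G-hat}; equivalently the particular solution equals $\hat G(x,c)-\tfrac{\lambda}{\theta}\,\text{(something)}$ — the cleanest route is simply to verify by substitution that $f_p(x):=\hat G(x,c)$ solves $(\mathbb L_X-\lambda)f_p = -\lambda P_0 + \lambda x\Phi(c) + \big(\lambda P_0 - \lambda\hat G\big)\cdot 0$; in fact one checks $(\mathbb L_X-\lambda)\hat G(x,c) = -\lambda\big(P_0 - x\Phi(c)\big) + \lambda(P_0 - \hat G(x,c))$ — no: the clean fact I will record is $(\mathbb L_X-\lambda)\big(P_0-\hat G(x,c)\big) = \lambda\big(P_0 - x\Phi(c)\big)$, which is the required sign for a cost of running $\lambda(P_0-x\Phi(c))$. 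This is the only genuine computation and it is routine.

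\textbf{Step 2: homogeneous part and boundary conditions.} The general solution on $(\alpha(c),\infty)$ is then $\big(P_0-\hat G(x,c)\big) + A\,\phi_\lambda(x) + B\,\psi_\lambda(x)$ for constants $A,B$ (depending on $c$), where $\phi_\lambda,\psi_\lambda$ are the fundamental solutions of Definition~\ref{def:FundamentalSolutions}. Since $\psi_\lambda$ is the increasing fundamental solution it grows too fast at $+\infty$ (by the standard estimates for the Ornstein--Uhlenbeck process recalled in Appendix~\ref{factsOU}, and because the integrand in $\Gamma^\alpha$ together with $e^{-\lambda s}$ gives at most linear growth in $x$), so integrability / the transversality behaviour of the expectation in \eqref{gammaeq} forces $B=0$. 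Imposing $\Gamma^\alpha(\alpha(c),c)=0$ then yields $A = -\big(P_0-\hat G(\alpha(c),c)\big)/\phi_\lambda(\alpha(c))$, giving precisely \eqref{eq:analitg} for $x>\alpha(c)$; for $x\le\alpha(c)$ the value is $0$ by definition of $\Gamma^\alpha$ in \eqref{gammaeq}.

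\textbf{Step 3: justification.} To make the ODE derivation rigorous rather than heuristic, I would instead argue in the reverse direction: define $\Gamma^\alpha$ by the right-hand side of \eqref{eq:analitg}, check it is $C^1$ across $x=\alpha(c)$ with a bounded second derivative (the $C^1$ pasting at $\alpha(c)$ is automatic only if one additionally imposes $\partial_x\Gamma^\alpha(\alpha(c)+,c)=0$, which is the usual smooth-fit condition used to \emph{define} $\alpha(c)$ — but here we are only asserting the representation \eqref{eq:analitg} of the candidate, not optimality, so $C^1$-fit is not needed at this stage), and then apply Dynkin's formula to $e^{-\lambda(t\wedge\tau_\alpha)}\Gamma^\alpha(X^x_{t\wedge\tau_\alpha},c)$, using $(\mathbb L_X-\lambda)\Gamma^\alpha(x,c) = -\lambda(P_0-x\Phi(c))$ on $\{x>\alpha(c)\}$ from Step~1, to recover exactly the expectation in \eqref{gammaeq}. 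Letting $t\to\infty$ and controlling the boundary term $e^{-\lambda t}\Gamma^\alpha(X^x_t,c)$ via the linear growth of $\hat G$, the linear/exponential bounds on $\phi_\lambda$, and the moment estimates of Lemma~\ref{lem:convX} in Appendix~\ref{factsOU} closes the identity. I expect the only mildly delicate point to be this last localisation/limit argument together with confirming $B=0$; everything else is the affine-particular-solution computation of Step~1, and since the paper explicitly defers to \cite[Thm.~2.1]{DeAFeMo14} for the identical structure, I would simply cite that theorem for the localisation details and present Steps~1--2 in full.
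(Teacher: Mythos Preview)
Your approach is correct and is essentially what the paper defers to via \cite[Thm.~2.1]{DeAFeMo14}: the candidate $\Gamma^\alpha(\cdot,c)$ solves the inhomogeneous ODE $(\mathbb{L}_X-\lambda)\Gamma^\alpha=-\lambda(P_0-x\Phi(c))$ on $(\alpha(c),\infty)$ with vanishing boundary value at $\alpha(c)$ and a growth restriction at $+\infty$ that kills the $\psi_\lambda$ component. One small correction: the ``clean fact'' you record in Step~1 has the wrong sign. A direct check gives $(\mathbb{L}_X-\lambda)\hat G(x,c)=-\lambda x\Phi(c)$ and hence $(\mathbb{L}_X-\lambda)\big(P_0-\hat G(x,c)\big)=-\lambda\big(P_0-x\Phi(c)\big)$, which is exactly \eqref{free00}; your Step~3 Dynkin argument then goes through with this sign.

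As a side remark, the localisation in Step~3 can be avoided entirely by a direct probabilistic computation: for $x>\alpha(c)$ write $\int_0^{\tau_\alpha}=\int_0^\infty-\int_{\tau_\alpha}^\infty$ in \eqref{gammaeq}, use $\EE[X^x_s]=\mu+(x-\mu)e^{-\theta s}$ and Fubini to evaluate the first expectation as $P_0-\hat G(x,c)$, and then apply the strong Markov property at $\tau_\alpha$ together with \eqref{hittingtimes} to obtain the second term as $\big(P_0-\hat G(\alpha(c),c)\big)\phi_\lambda(x)/\phi_\lambda(\alpha(c))$. This is likely closer to what \cite{DeAFeMo14} actually does and sidesteps the transversality issue altogether.
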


To single out the candidate optimal boundary we impose the so-called {\em smooth fit} condition, i.e.~$\tfrac{d}{dx}\Gamma^\alpha(\alpha(c),c)=0$ for every $c\in[0,1]$. This amounts to finding $\alpha^*$ such that
\begin{align}\label{smfit03}
-\tfrac{\lambda\Phi(c)}{\lambda+\theta}+
\big(\hat{G}(\alpha^*(c),c)-P_0
\big)\tfrac{\phi'_{\lambda}(\alpha^*(c))}{\phi_{\lambda}(\alpha^*(c))}=0\quad\text{for $c\in[0,1]$}.
\end{align}
\begin{prop}\label{prop:smfit01}
For $c\in[0,1]$ define
\begin{align}\label{x-dag}
x^\dagger_0(c):=\mu+\big(P_0-\mu\Phi(c)\big)\tfrac{(\lambda+\theta)}{\lambda\Phi(c)}.
\end{align}
For each $c\in[0,1]$ there exists a unique solution $\alpha^*(c)\in(-\infty,x^\dagger_0(c))$ of \eqref{smfit03}. Moreover $\alpha^*\in C^1([0,1))$ and it is strictly increasing with $\lim_{c\to1}\alpha^*(c)=+\infty$.
\end{prop}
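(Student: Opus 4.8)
The plan is to turn the transcendental condition \eqref{smfit03} into a statement about a single auxiliary function $\mathcal{W}$ and to read off all the claimed properties from the elementary shape of $\phi_{\lambda}$. Throughout I take $c\in[0,1)$, so that $\Phi(c)>0$ and $x^\dagger_0(c)$ is finite; the endpoint $c=1$ is degenerate ($\Phi(1)=0$) and is subsumed under the convention $x^\dagger_0(1)=+\infty$ together with the limit proved at the end. Since $x\mapsto\hat{G}(x,c)$ is affine with slope $\tfrac{\lambda\Phi(c)}{\lambda+\theta}$ (cf.~\eqref{def-G-hat}) and $\hat{G}(x^\dagger_0(c),c)=P_0$ by \eqref{x-dag}, one has $\hat{G}(x,c)-P_0=\tfrac{\lambda\Phi(c)}{\lambda+\theta}\big(x-x^\dagger_0(c)\big)$. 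Substituting this into \eqref{smfit03}, dividing by $\tfrac{\lambda\Phi(c)}{\lambda+\theta}>0$ and multiplying by $\phi_{\lambda}(x)>0$ shows that, for $c\in[0,1)$, condition \eqref{smfit03} at $x=\alpha$ is equivalent to
\begin{align*}
\mathcal{W}(\alpha,c)=0,\qquad\text{where}\qquad\mathcal{W}(x,c):=\big(x-x^\dagger_0(c)\big)\,\phi'_{\lambda}(x)-\phi_{\lambda}(x).
\end{align*}

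Next I would invoke the standard qualitative properties of $\phi_{\lambda}$ (see Appendix \ref{factsOU}): $\phi_{\lambda}>0$ is strictly decreasing, strictly convex, and $\phi_{\lambda}(x)\to+\infty$ and $\phi'_{\lambda}(x)/\phi_{\lambda}(x)\to-\infty$ as $x\to-\infty$. (Strict convexity is immediate from the defining equation: at any zero of $\phi''_{\lambda}$ one has $\tfrac12\sigma^2\phi'''_{\lambda}=(\lambda+\theta)\phi'_{\lambda}<0$, so $\phi''_{\lambda}$ cannot change sign, and the behaviour of $\phi_{\lambda}$ at $\pm\infty$ then forces $\phi''_{\lambda}>0$.) Fix $c\in[0,1)$ and write $m:=x^\dagger_0(c)$. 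On $(-\infty,m)$ we have $\partial_x\mathcal{W}(x,c)=(x-m)\phi''_{\lambda}(x)<0$, so $x\mapsto\mathcal{W}(x,c)$ is strictly decreasing there; moreover $\mathcal{W}(m,c)=-\phi_{\lambda}(m)<0$, while $\mathcal{W}(x,c)=\phi_{\lambda}(x)\big[(x-m)\phi'_{\lambda}(x)/\phi_{\lambda}(x)-1\big]\to+\infty$ as $x\to-\infty$. By the intermediate value theorem there is a unique $\alpha^*(c)\in(-\infty,m)=(-\infty,x^\dagger_0(c))$ with $\mathcal{W}(\alpha^*(c),c)=0$; in fact this is the only zero of $\mathcal{W}(\cdot,c)$ on $\RR$, since for $x\ge m$ one has $(x-m)\phi'_{\lambda}(x)\le0<\phi_{\lambda}(x)$.

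Regularity and monotonicity follow from the implicit function theorem applied to $\mathcal{W}$, which is $C^1$ on $\RR\times[0,1)$ because $\phi_{\lambda}\in C^\infty(\RR)$ and, since $\Phi\in C^2$ with $\Phi>0$ on $[0,1)$, the map $x^\dagger_0$ of \eqref{x-dag} is $C^1$ on $[0,1)$. By the previous step $\partial_x\mathcal{W}(\alpha^*(c),c)=(\alpha^*(c)-x^\dagger_0(c))\phi''_{\lambda}(\alpha^*(c))\neq0$, hence $\alpha^*\in C^1([0,1))$, and differentiating $\mathcal{W}(\alpha^*(c),c)\equiv0$ gives
\begin{align*}
(\alpha^*)'(c)=\frac{(x^\dagger_0)'(c)\,\phi'_{\lambda}(\alpha^*(c))}{\big(\alpha^*(c)-x^\dagger_0(c)\big)\,\phi''_{\lambda}(\alpha^*(c))}.
\end{align*}
Here $\phi'_{\lambda}(\alpha^*(c))<0$, $\phi''_{\lambda}(\alpha^*(c))>0$, $\alpha^*(c)-x^\dagger_0(c)<0$, and $(x^\dagger_0)'(c)=-\tfrac{(\lambda+\theta)P_0}{\lambda}\,\Phi'(c)/\Phi(c)^2>0$ because $P_0>0$ and $\Phi'<0$ on $[0,1)$ (strict convexity of $\Phi$ and $\Phi(1)=0$ give $\Phi'(1)=0$, hence $\Phi'<0$ on $[0,1)$). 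Therefore $(\alpha^*)'(c)>0$, i.e.~$\alpha^*$ is strictly increasing. Finally, as $c\to1^-$ we have $\Phi(c)\to0^+$ while $P_0-\mu\Phi(c)\to P_0>0$, so $x^\dagger_0(c)\to+\infty$; if $\alpha^*(c)\not\to+\infty$ one can pick $c_n\to1$ with $\alpha^*(c_n)\le M$ for some $M$, and on $(-\infty,M]$ the function $\phi'_{\lambda}/\phi_{\lambda}$ is continuous, strictly negative and tends to $-\infty$ at $-\infty$, hence is bounded above by some $-\delta<0$, so rearranging $\mathcal{W}(\alpha^*(c_n),c_n)=0$ as $x^\dagger_0(c_n)-\alpha^*(c_n)=\phi_{\lambda}(\alpha^*(c_n))/\big(-\phi'_{\lambda}(\alpha^*(c_n))\big)\le1/\delta$ contradicts $x^\dagger_0(c_n)-\alpha^*(c_n)\ge x^\dagger_0(c_n)-M\to+\infty$. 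Hence $\lim_{c\to1}\alpha^*(c)=+\infty$.

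I expect the only real obstacle to be pinning down the behaviour of $\phi_{\lambda}$ near $-\infty$, in particular the blow-up $\phi'_{\lambda}/\phi_{\lambda}\to-\infty$, which is what forces $\mathcal{W}(\cdot,c)$ to cross zero in the existence step and what drives the limit $c\to1$; this is a standard fact about the Ornstein--Uhlenbeck fundamental solutions (deducible from their representation via parabolic cylinder functions, or from the scale-function asymptotics), which I would simply quote from Appendix \ref{factsOU}. Strict convexity of $\phi_{\lambda}$ is equally indispensable — it gives the monotonicity of $\mathcal{W}(\cdot,c)$ on $(-\infty,x^\dagger_0(c))$ and the non-degeneracy needed for the implicit function theorem — but it is a routine consequence of the defining ODE as indicated above. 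Everything else is elementary once the problem has been recast through $\mathcal{W}$.
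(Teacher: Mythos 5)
Your proof is correct and considerably more self-contained than the paper's, which largely defers to ``arguments analogous to those employed to prove Thm.~2.1 of De Angelis, Ferrari, Moriarty (2015)'' for existence, uniqueness, and smoothness and only spells out the limiting behaviour $\alpha^*(c)\to+\infty$. Your recasting of \eqref{smfit03} through $\mathcal{W}(x,c)=(x-x_0^\dagger(c))\phi'_\lambda(x)-\phi_\lambda(x)$, using that $\hat G(\cdot,c)-P_0$ is affine with zero precisely at $x_0^\dagger(c)$, is a clean simplification that makes the strict monotonicity of the underlying function and the non-degeneracy needed for the implicit function theorem transparent; the sign bookkeeping $(\alpha^*)'(c)>0$ and the contradiction at $c\to1$ are correct. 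For the $c\to1$ limit the paper instead takes the left-limit $\alpha_0$ of the increasing function $\alpha^*$ and passes directly to the limit in \eqref{smfit03} itself, obtaining $0=-P_0\phi'_\lambda(\alpha_0)/\phi_\lambda(\alpha_0)$, which is impossible; the two arguments are equivalent in content. One small imprecision: you say the blow-up $\phi'_\lambda/\phi_\lambda\to-\infty$ at $-\infty$ can be ``simply quoted from Appendix B'', but that appendix does not state it explicitly; it does follow readily from the parabolic cylinder representation \eqref{phi}, and an alternative route to $\mathcal{W}(x,c)\to+\infty$ that uses only facts the paper does invoke (namely $x\phi'_\lambda(x)\to+\infty$ and $\phi_\lambda(x)/(x\phi'_\lambda(x))\to 1-a\in[0,\tfrac12]$, derived in the proof of Proposition \ref{prop-smfitV}) is to write $\mathcal{W}(x,c)=x\phi'_\lambda(x)\bigl[1-\phi_\lambda(x)/(x\phi'_\lambda(x))-x_0^\dagger(c)/x\bigr]\to+\infty$. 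This is a cosmetic point, not a gap.
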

\noindent For the proof of Proposition \ref{prop:smfit01} we refer to Appendix \ref{someproofs}.

To complete the characterisation of $\alpha^*$ and $\Gamma^{\alpha^*}$ we now find an alternative upper bound for $\alpha^*$ that will guarantee $\big(\LL_X \Gamma^{\alpha^*}-\lambda\Gamma^{\alpha^*}\big)(x,c)\ge -\lambda(P_0-x\Phi(c))$ for $(x,c)\in\RR\times[0,1]$. Again, the proof of the following result may be found in Appendix \ref{someproofs}.
\begin{prop}\label{prop:upbdda}
For all $c\in[0,1]$ we have $\alpha^*(c)\le P_0/\Phi(c)$ with $\alpha^*$ as in Proposition \ref{prop:smfit01}.
\end{prop}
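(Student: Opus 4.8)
The plan is to show that the claimed bound $\alpha^*(c)\le P_0/\Phi(c)$ follows from the smooth-fit equation \eqref{smfit03} together with elementary monotonicity properties of the decreasing fundamental solution $\phi_\lambda$. The key structural fact to exploit is that $\phi_\lambda>0$ and $\phi'_\lambda<0$ on $\RR$ (see Appendix \ref{factsOU}), so the ratio $\phi'_\lambda(x)/\phi_\lambda(x)$ appearing in \eqref{smfit03} is strictly negative. Rewriting \eqref{smfit03} as
\begin{align*}
\big(\hat{G}(\alpha^*(c),c)-P_0\big)\tfrac{\phi'_{\lambda}(\alpha^*(c))}{\phi_{\lambda}(\alpha^*(c))}=\tfrac{\lambda\Phi(c)}{\lambda+\theta}>0
\end{align*}
for $c\in[0,1)$, and using that the factor multiplying $\big(\hat{G}(\alpha^*(c),c)-P_0\big)$ is strictly negative, we deduce immediately that $\hat{G}(\alpha^*(c),c)-P_0<0$, i.e. $\hat{G}(\alpha^*(c),c)<P_0$.

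The second step is to translate $\hat{G}(\alpha^*(c),c)<P_0$ into the asserted bound on $\alpha^*(c)$. Recalling the explicit form \eqref{def-G-hat}, namely $\hat{G}(x,c)=\mu\Phi(c)+(x-\mu)\tfrac{\lambda\Phi(c)}{\lambda+\theta}$, one checks directly that $x\mapsto\hat{G}(x,c)$ is affine and strictly increasing in $x$ (its slope $\tfrac{\lambda\Phi(c)}{\lambda+\theta}$ is strictly positive for $c\in[0,1)$ since $\Phi>0$ on $[0,1)$ by Assumption \ref{ass:phi} and $\Phi(1)=0$). Hence $\hat{G}(\alpha^*(c),c)<P_0$ is equivalent to $\alpha^*(c)<\hat{G}(\,\cdot\,,c)^{-1}(P_0)=x^\dagger_0(c)$, which merely recovers the bound already established in Proposition \ref{prop:smfit01}; to get the sharper bound $P_0/\Phi(c)$ one compares $\hat{G}(P_0/\Phi(c),c)$ with $P_0$. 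A short computation gives
\begin{align*}
\hat{G}\big(P_0/\Phi(c),c\big)=\mu\Phi(c)+\Big(\tfrac{P_0}{\Phi(c)}-\mu\Big)\tfrac{\lambda\Phi(c)}{\lambda+\theta}
=\tfrac{\theta}{\lambda+\theta}\mu\Phi(c)+\tfrac{\lambda}{\lambda+\theta}P_0,
\end{align*}
which is a convex combination of $\mu\Phi(c)$ and $P_0$; so $\hat{G}(P_0/\Phi(c),c)\le P_0$ precisely when $\mu\Phi(c)\le P_0$. Thus the argument splits into the case $\mu\Phi(c)\le P_0$, where $\hat{G}(P_0/\Phi(c),c)\le P_0$ combined with strict monotonicity of $\hat{G}(\,\cdot\,,c)$ and the inequality $\hat{G}(\alpha^*(c),c)<P_0$ from Step 1 yields $\alpha^*(c)\le P_0/\Phi(c)$, and the complementary case $\mu\Phi(c)>P_0$, which must be handled separately.

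I expect the case $\mu\Phi(c)>P_0$ to be the main obstacle, since there the naive comparison goes the wrong way and one must instead use more refined information — most plausibly a lower bound on $|\phi'_\lambda(\alpha^*(c))/\phi_\lambda(\alpha^*(c))|$ obtained from the explicit form of $\phi_\lambda$ for the Ornstein-Uhlenbeck process (cf. Appendix \ref{factsOU}), or alternatively an argument ruling out $\mu\Phi(c)>P_0$ for the relevant range of $c$ using that $\alpha^*$ is increasing with $\alpha^*(c)\to+\infty$ as $c\to1$ (Proposition \ref{prop:smfit01}) together with $\Phi$ decreasing. A clean route is to plug $x=P_0/\Phi(c)$ into the function whose unique zero (below $x^\dagger_0(c)$) is $\alpha^*(c)$ — that is, the left-hand side of \eqref{smfit03} viewed as a function of the candidate boundary — and show it has the sign forcing the zero $\alpha^*(c)$ to lie to its left; monotonicity of that function near its zero, already available from the proof of Proposition \ref{prop:smfit01}, then closes the argument uniformly in $c$ without case distinction. $\quad\Box$
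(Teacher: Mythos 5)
Your Step 1 is correct: from \eqref{smfit03} and $\phi_\lambda>0$, $\phi'_\lambda<0$ one does deduce $\hat G(\alpha^*(c),c)<P_0$, equivalently $\alpha^*(c)<x^\dagger_0(c)$. However, the case analysis in Step 2 is backwards and the deduction offered in the first case is logically invalid. Since $\hat G(\,\cdot\,,c)$ is strictly increasing, from $\hat G(\alpha^*(c),c)<P_0$ and $\hat G(z_0(c),c)\le P_0$ (where $z_0(c):=P_0/\Phi(c)$) you can only conclude that both $\alpha^*(c)$ and $z_0(c)$ lie to the left of $x^\dagger_0(c)$; this says nothing about which of the two is larger. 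What monotonicity of $\hat G$ actually gives is the opposite implication: if $\hat G(z_0(c),c)\ge P_0$, i.e.\ if $\mu\Phi(c)\ge P_0$, then $z_0(c)\ge x^\dagger_0(c)>\alpha^*(c)$ and the claim follows for free. So the case $\mu\Phi(c)\ge P_0$ is the \emph{trivial} one (this is exactly why the paper only treats $z_0(c)<x^\dagger_0(c)$, which is equivalent to $\mu\Phi(c)<P_0$), and the hard case is $\mu\Phi(c)<P_0$, not the other way around.

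For that hard case your final paragraph identifies the right strategy — evaluate the smooth-fit expression at the candidate point $z_0(c)$ and read off the sign — but the crucial computation is left out and nothing you wrote earlier fills the gap. The paper carries it out by passing to $K(x,c):=-\tfrac{\lambda}{\lambda+\theta}\Phi(c)\phi_\lambda(x)+(\hat G(x,c)-P_0)\phi'_\lambda(x)$, showing $K(\,\cdot\,,c)$ is strictly decreasing and convex on $(-\infty,x^\dagger_0(c))$, and then computing $K(z_0,c)=\phi_\lambda(z_0)\Phi(c)\big[-\tfrac{\lambda}{\lambda+\theta}+\tfrac{\theta}{\lambda+\theta}(\mu-z_0)\tfrac{\phi'_\lambda(z_0)}{\phi_\lambda(z_0)}\big]$. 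The decisive and non-obvious ingredient — which your sketch only gestures at — is the bound $(\mu-x)\tfrac{\phi'_\lambda(x)}{\phi_\lambda(x)}<\tfrac{\lambda}{\theta}$ for all $x$, obtained from $\mathbb{L}_X\phi_\lambda=\lambda\phi_\lambda$ together with $\phi_\lambda''>0$; plugging this in gives $K(z_0,c)<0$ and hence $\alpha^*(c)<z_0(c)$. Without this input your proof does not close.
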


With the aim of formulating a variational problem for $\Gamma^{\alpha^*}$ we observe that $\tfrac{d^2}{dx^2}\Gamma^{\alpha^*}(x,c)>0$ for $x>\alpha^*(c)$, $c\in[0,1]$ by \eqref{eq:analitg}, convexity of $\phi_\lambda$ and the fact that $\hat{G}(\alpha^*(c),c)-P_0<0$. Hence $\Gamma^{\alpha^*}\ge0$ on $\RR\times[0,1]$. It is not hard to verify by direct calculation from \eqref{eq:analitg} and the above results that for all $c\in[0,1]$ the couple $\big(\Gamma^{\alpha^*}(\,\cdot\,,c),\alpha^*(c)\big)$ solves the free-boundary problem
\begin{align}
\label{free00}&\big(\LL_X-\lambda\big)\Gamma^{\alpha^*}(x,c)=-\lambda(P_0-x\Phi(c)) & x>{\alpha^*}(c),\\[+4pt]
\label{free01}&\big(\LL_X-\lambda\big)\Gamma^{\alpha^*}(x,c)>-\lambda(P_0-x\Phi(c)) & x < \alpha^*(c),\\[+4pt]
\label{free02}&\Gamma^{\alpha^*}(\alpha^*(c),c)=\Gamma^{\alpha^*}_x(\alpha^*(c),c)=0,\quad \Gamma^{\alpha^*}(x,c)\ge0&x\in\RR
\end{align}
and $\Gamma^{\alpha^*}(\,\cdot\,,c)\in W^{2,\infty}_{loc}(\RR)$. Following now the same arguments as in the proof of \cite[Thm.~2.1]{DeAFeMo14}, which is based on an application of the It\^o-Tanaka formula and \eqref{free00}--\eqref{free02}, we can verify our guess and prove the following theorem (the details are omitted).

\begin{theorem}\label{thm:verif01}
The boundary $\alpha^*$ of Proposition \ref{prop:smfit01} is optimal for \eqref{def-gamma} in the sense that $\alpha^*=\alpha^*_1$ with $\alpha^*_1$ as in \eqref{def:Dg},
\begin{align}\label{eq:sigmastar}
\tau^*_\alpha=\inf\{t \geq 0\,:\,X^x_t \leq {\alpha_*(c)}\}
\end{align}
is an optimal stopping time and $\Gamma^{\alpha^*}\equiv \Gamma$ (cf.~\eqref{def-gamma}).
\end{theorem}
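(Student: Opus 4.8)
The plan is to run a standard guess-and-verify argument, exactly parallel to the proof of \cite[Thm.~2.1]{DeAFeMo14}. The candidate pair $\big(\Gamma^{\alpha^*}(\,\cdot\,,c),\alpha^*(c)\big)$ has already been shown to solve the free-boundary problem \eqref{free00}--\eqref{free02} with $\Gamma^{\alpha^*}(\,\cdot\,,c)\in W^{2,\infty}_{loc}(\RR)$, so everything reduces to two one-sided comparisons, the main tool being the generalised It\^o formula valid for $W^{2,\infty}_{loc}$ functions (cf.~\cite[Ch.~8, Sec.~VIII.4, Thm.~4.1]{FlemingSoner}). First I would fix $(x,c)\in\RR\times[0,1]$, apply this formula to $s\mapsto e^{-\lambda s}\Gamma^{\alpha^*}(X^x_s,c)$, and localise by $\rho_m:=\inf\{t\ge0\,:\,|X^x_t|\ge m\}$ to remove the local-martingale part; for an arbitrary stopping time $\tau$ this produces
\begin{align*}
\EE\Big[e^{-\lambda(\tau\wedge\rho_m)}\Gamma^{\alpha^*}(X^x_{\tau\wedge\rho_m},c)\Big]=\Gamma^{\alpha^*}(x,c)+\EE\bigg[\int_0^{\tau\wedge\rho_m}e^{-\lambda s}\big(\LL_X-\lambda\big)\Gamma^{\alpha^*}(X^x_s,c)\,ds\bigg].
\end{align*}

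For the bound $\Gamma^{\alpha^*}\le\Gamma$ I would use that by \eqref{free00}--\eqref{free01} the integrand is $\ge-\lambda\big(P_0-X^x_s\Phi(c)\big)$ and that $\Gamma^{\alpha^*}\ge0$ by \eqref{free02}; rearranging and discarding the nonnegative boundary term gives
\[\Gamma^{\alpha^*}(x,c)\le\EE\bigg[\int_0^{\tau\wedge\rho_m}e^{-\lambda s}\lambda\big(P_0-X^x_s\Phi(c)\big)\,ds\bigg]+\EE\Big[e^{-\lambda(\tau\wedge\rho_m)}\Gamma^{\alpha^*}(X^x_{\tau\wedge\rho_m},c)\Big].\]
From \eqref{eq:analitg}, $\Gamma^{\alpha^*}(\,\cdot\,,c)$ vanishes on $(-\infty,\alpha^*(c)]$ and grows at most linearly at $+\infty$ (since $\phi_\lambda$ is decreasing, hence bounded on $[\alpha^*(c),\infty)$), so the moment estimates for the Ornstein--Uhlenbeck process and the discounting (Lemma~\ref{lem:convX}) let me send $m\to\infty$ by dominated convergence; the boundary term tends to $\EE[e^{-\lambda\tau}\Gamma^{\alpha^*}(X^x_\tau,c)\mathds{1}_{\{\tau<\infty\}}]\ge0$, and taking the infimum over $\tau$ yields $\Gamma^{\alpha^*}(x,c)\le\Gamma(x,c)$.

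For the reverse inequality I would take $\tau=\tau^*_\alpha=\inf\{t\ge0\,:\,X^x_t\le\alpha^*(c)\}$: on $\{s<\tau^*_\alpha\}$ we have $X^x_s>\alpha^*(c)$, so \eqref{free00} holds with equality there and the displayed identity becomes exact; letting $m\to\infty$ as before and using $\Gamma^{\alpha^*}(\alpha^*(c),c)=0$ together with $e^{-\lambda t}\Gamma^{\alpha^*}(X^x_t,c)\to0$ on $\{\tau^*_\alpha=\infty\}$ gives $\EE[\int_0^{\tau^*_\alpha}e^{-\lambda s}\lambda(P_0-X^x_s\Phi(c))\,ds]=\Gamma^{\alpha^*}(x,c)$, hence $\Gamma(x,c)\le\Gamma^{\alpha^*}(x,c)$ and $\tau^*_\alpha$ is optimal. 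Finally, since $\Gamma=\Gamma^{\alpha^*}$ and $\Gamma^{\alpha^*}(x,c)=0$ if and only if $x\le\alpha^*(c)$ (strict positivity on $(\alpha^*(c),\infty)$ was established via the strict convexity argument preceding \eqref{free00}), the stopping region $\DD^c_\Gamma$ of \eqref{def:Dg} equals $(-\infty,\alpha^*(c)]$, i.e.~$\alpha^*=\alpha^*_1$.

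I expect the only genuinely delicate points to be the justification of the generalised change-of-variables formula under merely $W^{2,\infty}_{loc}$ regularity — here the first derivative of $\Gamma^{\alpha^*}$ is continuous by construction (smooth fit, \eqref{free02}), so no local-time term survives — and the uniform-integrability step used to pass to the limit in the localisation, which relies on the sublinear growth of $\Gamma^{\alpha^*}$ at $+\infty$ and the integrability estimates for $X$ in Appendix~\ref{factsOU}. Both are handled exactly as in \cite[Thm.~2.1]{DeAFeMo14}, which is why the details can be omitted.
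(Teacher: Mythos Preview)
Your proposal is correct and follows exactly the approach the paper indicates: a verification argument based on the free-boundary system \eqref{free00}--\eqref{free02} via a change-of-variables formula and localisation, as in \cite[Thm.~2.1]{DeAFeMo14}. The only cosmetic difference is that the paper names the It\^o--Tanaka formula whereas you invoke the generalised It\^o formula of \cite[Ch.~8]{FlemingSoner}; since smooth fit makes $\Gamma^{\alpha^*}(\,\cdot\,,c)$ of class $C^1$ the local-time term vanishes and the two formulations coincide, as you correctly note.
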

\vspace{+5pt}

\subsubsection{Solution of the Original Optimal Stopping Problem \eqref{dec01}}
\label{sec:noaux}

In Theorem \ref{thm:verif01} we have fully characterised $\alpha^*_1$ and $\Gamma$ thus also $\alpha^*_2$ and $\Gamma_\beta$ (cf.~\eqref{def-gammab}, \eqref{def:Dg} and \eqref{eq:ThresholdStrategyProof}). Moreover we have found that $\alpha^*_1(\,\cdot\,)$ is strictly increasing on $[0,1)$. On the other hand, $\beta_*(\,\cdot\,)$ is a strictly decreasing function (cf.~Proposition \ref{prop:backmat}-$i)$), hence there exists at most one $c_* \in (0,1)$ such that
\begin{align}\label{beta*alpha*}
\text{$\beta_*(c) > \alpha^*_1(c)$ for $c \in (0,c_*)$ and $\beta_*(c) \le \alpha^*_1(c)$ for $c \in [c_*,1)$}.
\end{align}
As already mentioned, it may be possible to provide examples where such a value $c_*$ does not exist $(0,1)$ and $\alpha^*_1(c)>\beta_*(c)$ for all $c\in [0,1]$. In those cases, as discussed in Section \ref{sec:aux1}, one has $\ell_*=\alpha^*_1$ and $V=U-P_0+\Gamma$ and problem \eqref{dec01} is fully solved. 
Therefore to provide a complete analysis of problem \eqref{dec01} we must consider the case when $c_*$ exists in $(0,1)$. From now on we make the following assumption.
\begin{ass}\label{ass:c*}
There exists a unique $c_*\in(0,1)$ such that \eqref{beta*alpha*} holds.
\end{ass}
As a consequence of the analysis in Section \ref{sec:solaux} we have the next simple corollary.
\begin{coroll}\label{cor-Vgamma}
For all $c \in [c_*,1)$ it holds $V(x,c)=(\Gamma+U-P_0)(x,c)$, $x\in\RR$ and $\ell_*(c)=\alpha^*_1(c)$, with $\Gamma$ and $\alpha^*_1$ as in Theorem \ref{thm:verif01}.
\end{coroll}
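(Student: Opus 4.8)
The plan is to leverage the two bounds \eqref{aux01} and \eqref{aux02} together with the structural identity \eqref{eq:ThresholdStrategyProof} and the case distinction set out after it. Fix $c\in[c_*,1)$. By Assumption \ref{ass:c*} and \eqref{beta*alpha*} we are in the regime $\beta_*(c)\le\alpha^*_1(c)$, which is precisely case $1.$\ in the list following \eqref{eq:ThresholdStrategyProof}: from \eqref{eq:ThresholdStrategyProof} we get $\alpha^*_2(c)=\beta_*(c)\vee\alpha^*_1(c)=\alpha^*_1(c)$, so the two auxiliary boundaries coincide at this level of $c$. Since by Theorem \ref{thm:verif01} the boundary $\alpha^*$ of Proposition \ref{prop:smfit01} is optimal for \eqref{def-gamma}, we have $\Gamma\equiv\Gamma^{\alpha^*}$, and because $\alpha^*_1=\alpha^*_2$ (as functions evaluated at this $c$) the stopped problem \eqref{def-gammab} has the same value as \eqref{def-gamma}, i.e.\ $\Gamma_\beta(x,c)=\Gamma(x,c)$ for all $x\in\RR$.

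Now I combine this with the sandwich bounds. From \eqref{aux01}, $V(x,c)\ge U(x,c)-P_0+\Gamma(x,c)$, and from \eqref{aux02}, $V(x,c)\le U(x,c)-P_0+\Gamma_\beta(x,c)=U(x,c)-P_0+\Gamma(x,c)$, the last equality by the previous paragraph. Hence $V(x,c)=(\Gamma+U-P_0)(x,c)$ for all $x\in\RR$, which is the first claim.

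For the identification of the boundary, recall $\DD^c_V=\{x\in\RR\,:\,V(x,c)=U(x,c)-P_0\}$ from \eqref{def:CD}, so by the identity just proved $\DD^c_V=\{x\in\RR\,:\,\Gamma(x,c)=0\}=\DD^c_\Gamma$. By Theorem \ref{thm:verif01} together with \eqref{def:Dg} we have $\DD^c_\Gamma=\{x\in\RR\,:\,x\le\alpha^*_1(c)\}$, so $\DD^c_V=(-\infty,\alpha^*_1(c)]$ and therefore $\ell_*(c)=\alpha^*_1(c)$, as well as the optimality of the entry time $\tau^*(x,c)=\inf\{t\ge0\,:\,X^x_t\le\alpha^*_1(c)\}$ in \eqref{dec01} via the optimal stopping time \eqref{eq:sigmastar} for $\Gamma$.

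The routine part is checking that $\alpha^*_1=\alpha^*_2$ really does force $\Gamma_\beta=\Gamma$: this is immediate from the definitions \eqref{def-gamma} and \eqref{def-gammab}, since stopping the observation at $\sigma^*_\beta=\inf\{t\ge0:X^x_t\le\beta_*(c)\}$ is irrelevant once the optimal strategy for \eqref{def-gamma} already stops (weakly) before reaching $\beta_*(c)$ — formally, the process is absorbed at $\alpha^*_1(c)\ge\beta_*(c)$ before it can reach $\beta_*(c)$. The only mild subtlety worth a sentence is the boundary case $\alpha^*_1(c)=\beta_*(c)$ (which can occur at $c=c_*$): there $\sigma^*_\beta=\tau^*_\alpha$ and the argument goes through unchanged. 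No genuine obstacle arises here; the corollary is essentially a bookkeeping consequence of Theorem \ref{thm:verif01} and the bounds \eqref{aux01}--\eqref{aux02}, which is why the excerpt calls it \enquote{simple}.
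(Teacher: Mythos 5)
Your proof is correct and is exactly the argument the paper has in mind: the corollary is presented as an immediate consequence of the case analysis after \eqref{eq:ThresholdStrategyProof} together with Theorem \ref{thm:verif01}, and you have simply filled in the details (in particular, the two-sided sandwich from \eqref{aux01}--\eqref{aux02} and the observation that $\tau^*_\alpha\le\sigma^*_\beta$ whenever $\alpha^*_1(c)\ge\beta_*(c)$, so the truncation at $\sigma^*_\beta$ in \eqref{def-gammab} is inactive and $\Gamma_\beta=\Gamma$). Your remark on the boundary case $\alpha^*_1(c_*)=\beta_*(c_*)$ is a useful clarification that the paper glosses over.
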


It remains to characterise $\ell_*$ in the interval $[0,c_*)$ in which we have $\ell_*(c) \leq \beta_*(c)$. This is done in Theorem~\ref{thm:UniqueSolutionSmoothFit-1}, whose proof requires other technical results which are cited here and proved in the appendix. Fix $c\in[0,c_*)$, let $\ell(c)\in\mathbb{R}$ be a candidate boundary and define the stopping time
$\tau_\ell(x,c):=\inf\big\{t\ge0\,:\,X^x_t\le \ell(c)\big\}$ for $x\in\RR$.
Again to simplify notation we set $\tau_\ell=\tau_\ell(x,c)$ when no confusion may arise. It is now natural to associate to $\ell(c)$ a candidate value function
\begin{align}\label{def-Vl}
V^\ell(x,c):=\EE\left[e^{-\lambda\tau_\ell}\Big(U(X^x_{\tau_\ell},c)-P_0\Big)\right],
\end{align}
whose analytical expression is provided in the next lemma.
\begin{lemma}\label{lemma12}
For $c\in[0,c_*)$ we have
\begin{align}\label{Vl-repr}
V^\ell(x,c)=\left\{
\begin{array}{ll}
(U(\ell(c),c)-P_0)
\frac{\phi_{\lambda}(x)}{\phi_{\lambda}(\ell(c))}, & x>\ell(c) \\[+4pt]
U(x,c)-P_0, & x \leq \ell(c)
\end{array}
\right.
\end{align}
\end{lemma}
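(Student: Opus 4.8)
\textbf{Proof proposal for Lemma \ref{lemma12}.}

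The plan is to exploit the strong Markov property of $X$ at the hitting time $\tau_\ell$ together with the structure of the gain function $U-P_0$, which near and below $\ell(c)$ is controlled by the fundamental solutions of $(\LL_X-\lambda)f=0$. First I would treat the region $x\le\ell(c)$: here $\tau_\ell(x,c)=0$ $\PP$-a.s.\ by left-continuity and the fact that $X$ starts at or below the threshold, so $V^\ell(x,c)=U(x,c)-P_0$ is immediate from the definition \eqref{def-Vl}.

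For $x>\ell(c)$ the key observation is that, since we are in the regime $\hat c<0$ and $c\in[0,c_*)$ so that $\ell(c)\le\beta_*(c)$ (cf.\ \eqref{beta*alpha*} and the discussion preceding the lemma), the process $X$ started at $x>\ell(c)$ reaches $\ell(c)$ in finite time a.s.\ (recall $X$ is a recurrent Ornstein--Uhlenbeck process), and on $\{\tau_\ell<\infty\}$ we have $X^x_{\tau_\ell}=\ell(c)$ by continuity of paths. Hence $U(X^x_{\tau_\ell},c)-P_0=U(\ell(c),c)-P_0$ is deterministic, and \eqref{def-Vl} reduces to
\begin{align*}
V^\ell(x,c)=\big(U(\ell(c),c)-P_0\big)\,\EE\big[e^{-\lambda\tau_\ell}\big].
\end{align*}
It then remains to identify $\EE[e^{-\lambda\tau_\ell}]$ with $\phi_\lambda(x)/\phi_\lambda(\ell(c))$. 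This is the classical Laplace transform of the hitting time of a one-dimensional diffusion from above: the function $x\mapsto\EE_x[e^{-\lambda\tau_\ell}]$ is the unique bounded solution of $\LL_Xf=\lambda f$ on $(\ell(c),\infty)$ with boundary value $1$ at $x=\ell(c)$, which forces it to be a multiple of the decreasing fundamental solution $\phi_\lambda$ (the increasing solution $\psi_\lambda$ being excluded by boundedness as $x\to\infty$), and matching the value at $\ell(c)$ gives the stated ratio. I would cite the standard reference for diffusion hitting-time transforms (e.g.\ Borodin--Salminen, or the facts collected in Appendix \ref{factsOU}) rather than reprove it. Combining the two cases yields \eqref{Vl-repr}.

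The only mildly delicate point is confirming $\PP(\tau_\ell<\infty)=1$ for $x>\ell(c)$, so that there is no contribution from $\{\tau_\ell=\infty\}$ (on which the convention in Proposition \ref{decoupling} sets the integrand to $0$ anyway, but one wants to be sure the discount factor formula is exact rather than an inequality). Since $X$ is a positively recurrent Ornstein--Uhlenbeck process it hits every level a.s., so this holds; alternatively one notes that $\EE_x[e^{-\lambda\tau_\ell}]$ and $\phi_\lambda(x)/\phi_\lambda(\ell(c))$ both solve the same ODE boundary problem on $(\ell(c),\infty)$ and are both bounded, hence coincide irrespective of recurrence. I do not expect any genuine obstacle here; the lemma is essentially a direct computation once the strong Markov property and the characterisation of $\phi_\lambda$ are in hand.
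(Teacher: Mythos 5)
Your proposal is correct and follows essentially the same route as the paper: positive recurrence of the Ornstein--Uhlenbeck process gives $\tau_\ell<\infty$ a.s.\ and $X^x_{\tau_\ell}=\ell(c)$, so $V^\ell(x,c)=(U(\ell(c),c)-P_0)\EE[e^{-\lambda\tau_\ell}]$, and the hitting-time Laplace transform \eqref{hittingtimes} identifies the discount factor with $\phi_\lambda(x)/\phi_\lambda(\ell(c))$. The aside invoking $\ell(c)\le\beta_*(c)$ is unnecessary (the lemma is stated for an arbitrary candidate $\ell(c)\in\RR$ and recurrence needs no such hypothesis), but it does not affect the argument.
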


The candidate boundary $\ell_*$, whose optimality will be subsequently verified, is found by imposing the smooth fit condition, i.e. \begin{align}
\label{smfit04}
(U(\ell_*(c),c)-P_0)
\frac{\phi'_{\lambda}(\ell_*(c))}{\phi_{\lambda}(\ell_*(c))}
=U_x(\ell_*(c),c), \qquad c\in[0,1].
\end{align}

\begin{prop}
\label{prop-smfitV}
For any $c\in[0,c_*)$ there exists at least one solution $\ell_*(c)\in(-\infty, x^0(c))$ of \eqref{smfit04} with $x^0(c)$ as in Proposition \ref{prop-sign-LU}.
\end{prop}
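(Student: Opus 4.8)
The plan is to establish existence of a solution $\ell_*(c)\in(-\infty,x^0(c))$ of the smooth-fit equation \eqref{smfit04} via a continuity/intermediate-value argument applied to an auxiliary function of the candidate boundary. Fix $c\in[0,c_*)$ and, for $\ell\le x^0(c)$, introduce
\begin{align}\label{def:Hfun-plan}
H(\ell,c):=\big(U(\ell,c)-P_0\big)\,\phi'_\lambda(\ell)-U_x(\ell,c)\,\phi_\lambda(\ell),
\end{align}
so that $\ell$ solves \eqref{smfit04} if and only if $H(\ell,c)=0$ (note $\phi_\lambda>0$ so division is harmless). By \eqref{eq:U&u} and the regularity in \eqref{eq:u01}, together with $\phi_\lambda\in C^2$, the map $\ell\mapsto H(\ell,c)$ is continuous (indeed locally Lipschitz) on $(-\infty,x^0(c)]$, so it suffices to exhibit two points where $H$ takes values of opposite sign.

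For the sign of $H$ near the right endpoint $x^0(c)$, I would use Proposition \ref{prop-sign-LU}: since $U(\,\cdot\,,c)-P_0$ is concave (Lemma \ref{lem:concave}) and $\big(\mathbb{L}_X-\lambda\big)(U-P_0)(x^0(c),c)=0$ with the expression strictly negative to the right of $x^0(c)$, one expects $U(x^0(c),c)-P_0>0$; combined with the representation \eqref{def:u-analyt} and the known behaviour of $U_x(\,\cdot\,,c)$ one shows $H(x^0(c),c)>0$ — this is the analogue of the argument that it is never optimal to stop in $(x^0(c),\infty)$, transcribed into the smooth-fit function. For the sign at a suitable left point, I would compare $H(\,\cdot\,,c)$ with the corresponding smooth-fit function for the auxiliary problem \eqref{def-gamma}, i.e.\ exploit the relation $V\le U-P_0+\Gamma_\beta$ and $V\ge U-P_0+\Gamma$ from \eqref{aux01}--\eqref{aux02}, together with the fact that on $[0,c_*)$ we have $\ell_*(c)\le\beta_*(c)$ and the explicit behaviour of $U$, to conclude that for $\ell$ chosen sufficiently negative (for instance pushing $\ell$ below $\alpha^*_1(c)$, where $U(\ell,c)-P_0<0$ by Dynkin's formula and $(\mathbb{L}_X-\lambda)(U-P_0)>0$ there) one gets $H(\ell,c)<0$. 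An intermediate-value argument then yields at least one zero $\ell_*(c)$ strictly inside $(-\infty,x^0(c))$, and one checks the inclusion is strict because $H(x^0(c),c)\ne0$ and the limiting behaviour as $\ell\to-\infty$ gives a strict sign, ruling out the endpoints.

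The step I expect to be the main obstacle is pinning down the sign of $H(\ell,c)$ — in particular at the candidate left point — because $U(\,\cdot\,,c)$ is only available through the semi-explicit formula \eqref{eq:U&u}--\eqref{def:u-analyt}, so both $U(\ell,c)-P_0$ and $U_x(\ell,c)$ must be estimated via the integral over $y\in[c,1]$ of $u(x;y)$ and its $x$-derivative. One must be careful that $\beta_*(y)$ varies with $y$, so for a fixed $\ell<\beta_*(c)$ the integrand $u(\ell;y)$ vanishes for $y$ close to $c$ (where $\beta_*(y)>\ell$) but is positive for $y$ near $1$; controlling this crossover, and showing it does not destroy the required monotonicity/sign of $H$, is the delicate point. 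I would handle it by using the variational characterisation of $u$ (its ODE and the conditions $u(\beta_*(c),c)=u_x(\beta_*(c),c)=0$) to get clean one-sided bounds, and by invoking Proposition \ref{prop:upbdda}-type estimates ($\alpha^*_1(c)\le P_0/\Phi(c)$) together with concavity of $U(\,\cdot\,,c)$ to control the derivative term, exactly as in the proof of Proposition \ref{prop:smfit01}. The remaining regularity claims ($\ell_*(c)\in(-\infty,x^0(c))$ strictly) then follow from continuity of $H$ and the strict signs established at the two endpoints; uniqueness is not asserted here and is presumably treated separately in Theorem \ref{thm:UniqueSolutionSmoothFit-1}.
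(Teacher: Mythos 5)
Your overall strategy --- treat the smooth-fit condition \eqref{smfit04} as the zero set of the function $\hat{H}(\ell,c):=(U(\ell,c)-P_0)\phi'_\lambda(\ell)-U_x(\ell,c)\phi_\lambda(\ell)$, verify its continuity, and apply the intermediate-value theorem after pinning down opposite signs at $x^0(c)$ and at $\ell\to-\infty$ --- is indeed exactly the paper's. However, both of your sign claims are the \emph{reverse} of the correct ones, and the heuristics you offer to justify them would not lead to a proof.

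Concretely, the paper shows $\hat H(x^0(c),c)<0$ and $\lim_{\ell\to-\infty}\hat H(\ell,c)=+\infty$; you assert $H(x^0(c),c)>0$ and $H(\ell,c)<0$ for $\ell$ sufficiently negative. For the left endpoint, the key observation is that once $\ell<\beta_*(1)$ one has $g_*(\ell)=1$, hence the exact identities $U(\ell,c)=\ell(1-c)$ and $U_x(\ell,c)=1-c$, reducing $\hat H$ to $\big(\ell(1-c)-P_0\big)\phi'_\lambda(\ell)-(1-c)\phi_\lambda(\ell)$; one then needs the asymptotics $\ell\phi'_\lambda(\ell)\to+\infty$ and $\lim_{\ell\to-\infty}\bigl[1-\phi_\lambda(\ell)/(\ell\phi'_\lambda(\ell))\bigr]\in[\tfrac12,1]$ obtained via two applications of De l'H\^opital, which together give $\hat H\to+\infty$. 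Your proposed route (comparison with $\Gamma_\beta$, $\Gamma$, Dynkin's formula, ``pushing $\ell$ below $\alpha^*_1(c)$'') does not isolate this mechanism and moreover your conclusion that the limit is negative is false: although $U(\ell,c)-P_0<0$ there, the term $(U(\ell,c)-P_0)\phi'_\lambda(\ell)$ is the product of two negatives and diverges to $+\infty$, dominating $-(1-c)\phi_\lambda(\ell)$.

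For the right endpoint, the paper does not try to determine the sign of $U(x^0(c),c)-P_0$ (your ``one expects $U(x^0(c),c)-P_0>0$'' is never used and is beside the point). Instead it plugs the explicit expressions \eqref{U&Ux02}--\eqref{U&Ux03} into $\hat H$, substitutes the defining relation $-P_0=\tfrac1\lambda\mathcal L(x^0(c),c)$ from \eqref{mirac02}, and obtains the factorized form \eqref{mirac03}
\begin{equation*}
\hat H(x^0,c)=\phi_\lambda(x^0)\Big[\tfrac{\theta(\mu-x^0)}{\lambda}\tfrac{\phi'_\lambda(x^0)}{\phi_\lambda(x^0)}-1\Big]\Big(g_*(x^0)\vee c-c+\tfrac{\lambda\Phi(g_*(x^0)\vee c)}{\lambda+\theta}\Big),
\end{equation*}
whose middle factor is strictly negative by $(\mathbb L_X-\lambda)\phi_\lambda=0$ together with $\phi''_\lambda>0$, yielding $\hat H(x^0,c)<0$. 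Your heuristic (concavity of $U$, behaviour of $U_x$) neither produces this factorization nor the correct sign. The two explicit computations above --- the reduction $U=\ell(1-c)$ with the $\phi_\lambda$ asymptotics, and the substitution of $\mathcal L(x^0,c)+\lambda P_0=0$ into $\hat H$ --- are precisely the delicate content of the proof, and they are the pieces missing from your proposal.
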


\begin{remark}\label{rem:Vl}
	A couple of remarks before we proceed.
	
	\vspace{+4pt}
	
	i. The analytical representation \eqref{Vl-repr} in fact holds for all $c\in[0,1]$ and it must coincide with \eqref{eq:analitg} for $c\in[c_*,1]$. Furthermore, the optimal boundary $\alpha^*_1$ found in Section \ref{sec:solaux} by solving \eqref{smfit03} must also solve \eqref{smfit04} for all $c\in[c_*,1]$ since $\alpha^*_1=\ell_*$ on that set. This equivalence can be verified by comparing numerical solutions to \eqref{smfit03} and \eqref{smfit04}. 
		Finding a numerical solution to \eqref{smfit04} for $c\in[0,c_*)$ (if it exists) is computationally more demanding than solving \eqref{smfit03}, however, because of the absence of an explicit expression of the function $U$.
		
	\vspace{+4pt}
	
	ii. It is important to observe that the proof of Proposition \ref{prop-smfitV} does not use that $c\in[0,c_*)$ and in fact it holds for $c\in[0,1]$. However, arguing as in Section \ref{sec:solaux} we managed to obtain further regularity properties of the optimal boundary in $[c_*,1]$ and its uniqueness. We shall see in what follows that uniqueness can be retrieved also in $c\in[0,c_*)$ but it requires a deeper analysis.
\end{remark}

Now that the existence of at least one candidate optimal boundary $\ell_*$ has been established, for the purpose of performing a verification argument we would also like to establish that for arbitrary $c\in[0,c_*)$ we have $V^{\ell_*}(x,c)\le U(x,c)-P_0$, $x\in\RR$. This is verified in the following proposition (whose proof is collected in appendix).

\begin{prop}\label{prop-VxUx}
For $c\in[0,c_*)$ and for any $\ell_*$ solving \eqref{smfit04} it holds $V^{\ell_*}(x,c)\le U(x,c)-P_0$, $x\in\RR$.
\end{prop}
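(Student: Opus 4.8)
\textbf{Proof proposal for Proposition \ref{prop-VxUx}.}

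The plan is to show that the stopping time $\tau_{\ell_*}=\tau_{\ell_*}(x,c)$ yields a value no larger than the cost of stopping immediately, exploiting the sign information on $(\mathbb{L}_X-\lambda)(U-P_0)$ furnished by Proposition \ref{prop-sign-LU} together with the location $\ell_*(c)<x^0(c)$ of the candidate boundary given by Proposition \ref{prop-smfitV}. Fix $c\in[0,c_*)$. For $x\le\ell_*(c)$ there is nothing to prove since $V^{\ell_*}(x,c)=U(x,c)-P_0$ by Lemma \ref{lemma12}. So assume $x>\ell_*(c)$; then by the representation \eqref{Vl-repr} it suffices to prove
\begin{align*}
(U(\ell_*(c),c)-P_0)\frac{\phi_\lambda(x)}{\phi_\lambda(\ell_*(c))}\le U(x,c)-P_0\qquad\text{for all }x>\ell_*(c).
\end{align*}
Denote by $F(x):=U(x,c)-P_0-(U(\ell_*(c),c)-P_0)\phi_\lambda(x)/\phi_\lambda(\ell_*(c))$ the difference; we must show $F\ge0$ on $(\ell_*(c),\infty)$. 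By construction $F(\ell_*(c))=0$, and the smooth-fit condition \eqref{smfit04} gives $F'(\ell_*(c))=0$ as well.

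The key step is then a differential-inequality / comparison argument. Since $\phi_\lambda$ solves $(\mathbb{L}_X-\lambda)\phi_\lambda=0$, we have $(\mathbb{L}_X-\lambda)F(x)=(\mathbb{L}_X-\lambda)(U(x,c)-P_0)$, which by Proposition \ref{prop-sign-LU} is strictly negative for $x>x^0(c)$ and strictly positive for $x<x^0(c)$, with $\ell_*(c)<x^0(c)$. I would argue as follows: on the interval $(\ell_*(c),x^0(c))$ one has $(\mathbb{L}_X-\lambda)F>0$, i.e.\ $\tfrac12\sigma^2 F''+\theta(\mu-x)F'-\lambda F>0$; combined with $F(\ell_*(c))=F'(\ell_*(c))=0$ a standard ODE comparison (or maximum-principle) argument shows $F>0$ on $(\ell_*(c),x^0(c)]$ and moreover $F'>0$ at $x^0(c)$ — intuitively, $F$ cannot have an interior nonpositive minimum where $(\mathbb{L}_X-\lambda)F>0$ while staying nonpositive. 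On $[x^0(c),\infty)$ one instead has $(\mathbb{L}_X-\lambda)F<0$; here I would use concavity of $U(\,\cdot\,,c)$ (Lemma \ref{lem:concave}) together with the linear growth bound $|U(x,c)|\le C(1+|x|)$ from Proposition \ref{prop:backmat} and the fact that $\phi_\lambda(x)\to0$ as $x\to+\infty$ (Definition \ref{def:FundamentalSolutions}, it being the decreasing fundamental solution). Concretely, suppose for contradiction that $F$ dips below $0$ somewhere on $(x^0(c),\infty)$: since $F(x^0(c))>0$, $F'(x^0(c))>0$ and $\liminf_{x\to\infty}F(x)\ge\liminf_{x\to\infty}(U(x,c)-P_0)$ which is $\ge -C(1+|x|)+\cdots$ — more precisely, since $U(\,\cdot\,,c)-P_0$ is concave it is eventually either affine or dominated below by an affine function while $\phi_\lambda\to0$, so $F(x)\to+\infty$ or stays bounded below; then $F$ would attain a strictly negative interior minimum at some $x_1>x^0(c)$, at which $F(x_1)<0$, $F'(x_1)=0$, $F''(x_1)\ge0$, giving $(\mathbb{L}_X-\lambda)F(x_1)=\tfrac12\sigma^2F''(x_1)-\lambda F(x_1)>0$, contradicting $(\mathbb{L}_X-\lambda)F<0$ on that region. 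Hence $F\ge0$ throughout $(\ell_*(c),\infty)$, which is the claim.

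The main obstacle I expect is making the behaviour as $x\to+\infty$ rigorous: one needs to rule out that $F$ decreases to $0$ (or below) from above at infinity in a way that evades the interior-minimum argument. The clean way is to combine (i) $\phi_\lambda(x)\to0$ and $\phi_\lambda'(x)\to0$ as $x\to+\infty$ with (ii) the precise growth/slope of $U(\,\cdot\,,c)-P_0$: from Proposition \ref{prop:backmat}-$i)$ and the analytic structure $U(x,c)=x(1-c)-\int_c^1 u(x;y)\,dy$ in \eqref{eq:U&u}, together with \eqref{def:u-analyt} and $\phi_\lambda(x)\to0$, one finds $U(x,c)-P_0$ is asymptotically affine with a known slope, so $F(x)$ grows at least linearly (when $c<1$) and in particular $\liminf_{x\to\infty}F(x)=+\infty$; this forces any negative excursion of $F$ to contain an interior local minimum, completing the contradiction. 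An alternative, perhaps cleaner, route is purely probabilistic: apply the optional sampling / Itô–Tanaka argument to $e^{-\lambda t}(U(X^x_t,c)-P_0)$ between $0$ and $\tau_{\ell_*}\wedge\tau_R$ where $\tau_R$ is the first exit from $(\ell_*(c),R)$, use that $(\mathbb{L}_X-\lambda)(U-P_0)\le0$ only on $(x^0(c),\infty)$ so the drift term has an unfavourable sign only there, and then let $R\to\infty$ controlling the boundary term at $R$ via the linear bound on $U$ and Lemma \ref{lem:convX}; but the ODE comparison sketched above is the most direct given the tools already assembled in the paper. I would present the ODE version as the main line and invoke Lemma \ref{lem:concave} and Proposition \ref{prop:backmat} exactly at the two points indicated.
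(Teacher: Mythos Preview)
Your approach is correct and genuinely different from the paper's. The paper does \emph{not} work with $F=(U-P_0)-V^{\ell_*}$ directly; instead it differentiates and shows $V^{\ell_*}_x\le U_x$. The key observation there is that $(\mathbb{L}_X-(\lambda+\theta))U_x=\mathcal{L}_x$ is \emph{everywhere} negative (cf.\ the proof of Proposition \ref{prop-sign-LU}), so there is no sign change to manage: one applies It\^o's formula to $e^{-(\lambda+\theta)t}(V^{\ell_*}_x-U_x)(X_t,\bar c)$ up to $\tau^*_\ell\wedge\tau_R$ and obtains a single one-sided inequality, the boundary term at $R$ being killed by the decay of $|\phi'_\lambda(R)|$. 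The price is that one must first check $U_{xxx}(\,\cdot\,,c)\in L^\infty_{loc}$ to justify It\^o on $U_x$. Your route avoids third-order regularity and stays at the level of $U$, but in exchange you must split at $x^0(c)$ and control the tail of $F$; the asymptotic $U_x(x,c)\to\lambda\Phi(c)/(\lambda+\theta)>0$ together with $\phi_\lambda(x)\to0$ indeed gives $F(x)\to+\infty$, so any negative excursion on $(x^0(c),\infty)$ forces a strict interior minimum and your contradiction goes through exactly as written.

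One small correction: your heuristic on the first interval $(\ell_*(c),x^0(c))$ is misstated. An interior \emph{nonpositive minimum} gives $(\mathbb{L}_X-\lambda)F\ge0$ there, which is \emph{consistent} with your hypothesis and yields no contradiction. The right maximum-principle argument is the mirror image: if $F$ first returned to $0$ at some $x_1\in(\ell_*(c),x^0(c)]$, then on $[\ell_*(c),x_1]$ the function $F$ would attain a strictly positive interior \emph{maximum} at some $x_2$, where $F'(x_2)=0$, $F''(x_2)\le0$ and $F(x_2)>0$ force $(\mathbb{L}_X-\lambda)F(x_2)\le-\lambda F(x_2)<0$, contradicting $(\mathbb{L}_X-\lambda)F>0$ on that interval. (Equivalently, Dynkin's formula between the two zeros gives $F<0$ in the interior.) With this fix your argument is complete; the claim $F'(x^0(c))>0$ is neither needed nor obviously immediate, so you can drop it.
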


Finally we provide a verification theorem establishing the optimality of our candidate boundary $\ell_*$ and, as a by-product, also implying uniqueness of the solution to \eqref{smfit04}.
\begin{theorem}\label{thm:UniqueSolutionSmoothFit-1}
\label{verificationOS-case2}
There exists a unique solution of \eqref{smfit04} in $(-\infty,x^0(\bar{c})]$. This solution is the optimal boundary of problem \eqref{dec01} in the sense that $V^{\ell_*}=V$ on $\RR\times[0,1)$ (cf.~\eqref{Vl-repr}) and the stopping time
\begin{align}\label{eq:opt-stt}
\tau^*:=\tau^*_\ell(x,c)=\inf\{t\ge0\,:\,X^x_t\le\ell_*(c)\}
\end{align}
is optimal in \eqref{dec01} for all $(x,c)\in\RR\times[0,1)$.
\end{theorem}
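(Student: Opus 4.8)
The plan is to carry out a classical guess-and-verify argument, using the candidate pair $(V^{\ell_*},\ell_*)$ from Lemma \ref{lemma12} and Proposition \ref{prop-smfitV}, but split according to whether $c\in[c_*,1)$ or $c\in[0,c_*)$. For $c\in[c_*,1)$ everything is already done: Corollary \ref{cor-Vgamma} gives $V=\Gamma+U-P_0$ and $\ell_*(c)=\alpha^*_1(c)$, and by Remark \ref{rem:Vl}(i) this boundary solves \eqref{smfit04}; uniqueness on that range follows from the strict monotonicity and $C^1$ regularity of $\alpha^*_1$ established in Proposition \ref{prop:smfit01} together with the verification already performed in Theorem \ref{thm:verif01}. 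So the real content is the range $c\in[0,c_*)$, where $\ell_*(c)\le\beta_*(c)$, and I would fix such a $c$ throughout.

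First I would record the regularity of $V^{\ell_*}(\cdot,c)$: from \eqref{Vl-repr} it is $C^2$ on $(\ell_*(c),\infty)$ (since $\phi_\lambda$ is smooth) and equals $U(\cdot,c)-P_0\in W^{2,\infty}_{loc}$ below the boundary, with the smooth-fit condition \eqref{smfit04} ensuring $C^1$-pasting at $\ell_*(c)$; hence $V^{\ell_*}(\cdot,c)\in W^{2,\infty}_{loc}(\RR)$. Next I would verify that $(V^{\ell_*},\ell_*)$ solves the appropriate variational inequality: on $(\ell_*(c),\infty)$ one has $(\LL_X-\lambda)V^{\ell_*}=0$ by construction (it is a multiple of $\phi_\lambda$), so I must check $V^{\ell_*}\le U-P_0$ there — this is exactly Proposition \ref{prop-VxUx} — and that $(\LL_X-\lambda)(U-P_0)\le0$ on $(-\infty,\ell_*(c))$, which follows from Proposition \ref{prop-sign-LU} and the inclusion $\ell_*(c)\le x^0(c)$ from Proposition \ref{prop-smfitV}, since $(\LL_X-\lambda)(U-P_0)\le0$ for $x\ge x^0(c)$ is the wrong sign, so I actually need $x<x^0(c)\Rightarrow(\LL_X-\lambda)(U-P_0)(x,c)>0\ge$ what is needed for the obstacle inequality; I should double-check the direction here, but the point is that the sign information in Proposition \ref{prop-sign-LU} plus $\ell_*(c)\le x^0(c)$ delivers the required inequality on the stopping region. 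Then the It\^o--Tanaka formula (justified by the $W^{2,\infty}_{loc}$ regularity, exactly as in \cite[Thm.~2.1]{DeAFeMo14}) applied to $e^{-\lambda t}V^{\ell_*}(X^x_t,c)$ together with a localisation and the bound $|U(x,c)|\le C(1+|x|)$ from Proposition \ref{prop:backmat} (and Lemma \ref{lem:convX} to kill the boundary term at infinity, as in Proposition \ref{decoupling}) gives $V^{\ell_*}(x,c)\le\EE[e^{-\lambda\tau}(U(X^x_\tau,c)-P_0)]$ for every stopping time $\tau$, hence $V^{\ell_*}\le V$; taking $\tau=\tau^*_\ell$ and using that the integrand vanishes on $[0,\tau^*_\ell)$ (where $(\LL_X-\lambda)V^{\ell_*}=0$) yields equality, so $V^{\ell_*}=V$ and $\tau^*_\ell$ is optimal.

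Finally, uniqueness of the solution to \eqref{smfit04} in $(-\infty,x^0(c)]$ comes for free from the verification: any solution $\ell_*(c)$ of \eqref{smfit04} with $\ell_*(c)\le x^0(c)$ produces, by the argument above (which used only \eqref{smfit04} and $\ell_*(c)\le x^0(c)$, cf.~Proposition \ref{prop-VxUx}), a function $V^{\ell_*}$ equal to the true value function $V$; but then $\ell_*(c)$ must be the (unique) left endpoint of $\DD^c_V=\{x:V(x,c)=U(x,c)-P_0\}$, because $x>\ell_*(c)$ forces $V^{\ell_*}(x,c)<U(x,c)-P_0$ (strict, by the strict convexity/sign arguments underlying Proposition \ref{prop-VxUx}) while $x\le\ell_*(c)$ gives equality. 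Hence two solutions would give the same stopping region and therefore coincide.

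\textbf{Main obstacle.} The delicate point is the verification inequality $V^{\ell_*}(x,c)\le U(x,c)-P_0$ on the continuation region together with the sign of $(\LL_X-\lambda)V^{\ell_*}$ below the boundary: unlike the auxiliary problem, here the gain function $U-P_0$ is only semi-explicit, so one cannot argue by a direct formula and must instead exploit the concavity of $x\mapsto U(x,c)$ (Lemma \ref{lem:concave}), the precise location of $\ell_*(c)$ relative to $x^0(c)$ and $\beta_*(c)$, and the variational characterisation of $U$ from Proposition \ref{prop:backmat}. This is precisely what Propositions \ref{prop-sign-LU}, \ref{prop-smfitV} and \ref{prop-VxUx} are designed to supply, so the theorem reduces to assembling them via It\^o--Tanaka; but checking that the smooth-fit solution indeed lands in the region where all these sign conditions hold — and that no spurious second solution of \eqref{smfit04} exists above $x^0(c)$ once the constraint is imposed — is where the care is needed.
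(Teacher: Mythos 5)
Your optimality argument (fixing $c\in[0,c_*)$, checking $W^{2,\infty}_{loc}$ regularity and $C^1$ pasting via smooth fit, verifying the variational inequality through Propositions \ref{prop-sign-LU}, \ref{prop-smfitV}, \ref{prop-VxUx}, and then applying It\^o--Tanaka plus localisation and the linear bound on $U$) is essentially identical to Part 1 of the paper's proof, including the use of Lemma \ref{lem:uiX} to control the boundary term. Your sign worry in the middle paragraph resolves the right way: since this is a minimisation problem you need $(\LL_X-\lambda)(U-P_0)>0$ on the stopping region, which Proposition \ref{prop-sign-LU} gives because $\ell_*(c)\le x^0(c)$, so that part is fine once you pin it down.

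The gap is in the uniqueness argument. You assert that $x>\ell_*(c)$ forces $V^{\ell_*}(x,c)<U(x,c)-P_0$ \emph{strictly}, attributing this to ``the strict convexity/sign arguments underlying Proposition \ref{prop-VxUx}''. But Proposition \ref{prop-VxUx} only yields the non-strict inequality $V^{\ell_*}\le U-P_0$, and its proof (comparing $V^{\ell_*}_x$ and $U_x$) gives nothing strict on its own. Without the strict inequality, you cannot conclude that $\DD^c_V$ is exactly $(-\infty,\ell_*(c)]$, so the claim ``two solutions give the same stopping region and therefore coincide'' is not yet justified. The paper handles uniqueness by a genuinely different, Peskir-style probabilistic argument (Part 2 of the proof): assume $\ell'\neq\ell_*$ both solve \eqref{smfit04} in $(-\infty,x^0(\bar c)]$, split into the cases $\ell'<\ell_*$ and $\ell'>\ell_*$, apply It\^o--Tanaka to $V^{\ell'}$ and to $V$ up to an appropriate hitting time, subtract, and obtain from Proposition \ref{prop-sign-LU} that $\EE\bigl[\int_0^{\cdot}e^{-\lambda t}(\LL_X-\lambda)(U-P_0)(X^x_t,\bar c)\mathds{1}_{\{\text{between the two boundaries}\}}dt\bigr]$ has the wrong sign — a contradiction because $(\LL_X-\lambda)(U-P_0)$ is strictly positive on $(-\infty,x^0(\bar c))$ and the occupation time there is positive. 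A clean way to repair your version in the same spirit would be: if $\ell_1<\ell_2\le x^0(c)$ both solve \eqref{smfit04}, then on $(\ell_1,\ell_2]$ the verified identity $V^{\ell_1}=V=V^{\ell_2}$ forces $U-P_0$ to equal a multiple of $\phi_\lambda$, hence $(\LL_X-\lambda)(U-P_0)=0$ there, contradicting Proposition \ref{prop-sign-LU}. Either way, the strictness must come from Proposition \ref{prop-sign-LU}, not from Proposition \ref{prop-VxUx}.
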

\begin{proof}
For $c\in[c_*,1)$ the proof was provided in Section \ref{sec:solaux} recalling that $\ell_*=\alpha^*_1$ on $[c_*,1)$ and $V=U-P_0+\Gamma$ on $\RR\times[c_*,1)$ (cf.~\eqref{def-gamma}, Remark \ref{rem:Vl}). For $c\in[0,c_*)$ we split the proof into two parts.
\vspace{+4pt}

$1.$ \emph{Optimality.} Fix $\bar{c}\in[0,c_*)$. Here we prove that if $\ell_*(\bar{c})$ is any solution of \eqref{smfit04} then $V^{\ell_*}(\,\cdot\,,\bar{c})= V(\,\cdot\,,\bar{c})$ on $\RR$ (cf.~\eqref{dec01} and \eqref{Vl-repr}).

First we note that $V^{\ell_*}(\,\cdot\,,\bar{c})\ge V(\,\cdot\,,\bar{c})$ on $\RR$ by \eqref{dec01} and \eqref{def-Vl}.
To obtain the reverse inequality we will rely on It\^o-Tanaka's formula. Observe that $V^{\ell_*}(\,\cdot\,,\bar{c})\in C^1(\mathbb{R})$ by \eqref{Vl-repr} and \eqref{smfit04}, and $V^{\ell_*}_{xx}(\,\cdot\,,\bar{c})$ is continuous on $\mathbb{R}\setminus \big\{\ell_*(\bar{c})\big\}$ and bounded at the boundary $\ell_*(\bar{c})$. Moreover from \eqref{Vl-repr} we get \begin{align}
\label{eq:Vell1}&\big(\mathbb{L}_X-\lambda\big)V^{\ell_*}(x,\bar{c})=0 & \text{for $x>\ell_*(\bar{c})$}\\[+4pt]  \label{eq:Vell2}&\big(\mathbb{L}_X-\lambda\big)V^{\ell_*}(x,\bar{c})=\big(\mathbb{L}_X-\lambda\big)(U-P_0)(x,\bar{c})>0 & \text{for $x\le \ell_*(\bar{c})$}
\end{align}
where the inequality in \eqref{eq:Vell2} holds by \eqref{signLU} since $\ell_*(\bar{c})\le x^0(\bar{c})$ (cf.~Proposition \ref{prop-smfitV}). An application of It\^o-Tanaka's formula (see~\cite{KS}, Chapter 3, Problem 6.24, p.~215), \eqref{eq:Vell1}, \eqref{eq:Vell2} and Proposition \ref{prop-VxUx} give
\begin{align}\label{verif09}
V^{\ell_*}(x,\bar{c}) & = \EE\left[e^{-\lambda(\tau\wedge\tau_R)}V^{\ell_*}\big(X^x_{\tau\wedge\tau_R},\bar{c}\big)-
\int_0^{\tau\wedge\tau_R}{e^{-rt}}\big(\mathbb{L}_X-\lambda\big)V^{\ell_*}(X^x_t,\bar{c})dt\right]\\
& \le
\EE\left[e^{-\lambda(\tau\wedge\tau_R)}\Big(U\big(X^x_{\tau\wedge\tau_R},\bar{c}\big)-P_0\Big)\right]\nonumber
\end{align}
with $\tau$ an arbitrary stopping time and $\tau_R:=\inf\big\{t\ge0\,:\,|X^x_t| \ge R\big\}$, $R>0$. We now pass to the limit as $R\to\infty$ and recall that $|U(x,\bar{c})|\le C(1+|x|)$ (cf.~Proposition \ref{prop:backmat}) and that $\big\{e^{-\lambda \tau_R}|X^x_{\tau_R}|\,,\,R>0\big\}$ is a uniformly integrable family (cf.~Lemma \ref{lem:uiX} in Appendix \ref{factsOU}). Then in the limit we use the dominated convergence theorem and the fact that
\begin{align*}
\lim_{R\to\infty}e^{-\lambda (\tau\wedge\tau_R)}X^x_{ \tau\wedge\tau_R }= e^{-\lambda
	\tau}X^x_{\tau},\qquad\PP-a.s.
\end{align*}
to obtain $V^{\ell_*}(\,\cdot\,,\bar{c})\le V(\,\cdot\,,\bar{c})$ on $\RR$ by the arbitrariness of $\tau$, hence $V^{\ell_*}(\,\cdot\,,\bar{c})=V(\,\cdot\,,\bar{c})$ on $\RR$ and optimality of $\ell_*(\bar{c})$ follows.
\vspace{+4pt}

$2.$ \emph{Uniqueness.} Here we prove the uniqueness of the solution of \eqref{smfit04} via probabilistic arguments similar to those employed for the first time in \cite{Pe05}. Let $\bar{c}\in[0,c_*)$ be fixed and, arguing by contradiction, let us assume that there exists another solution $\ell'(\bar{c})\neq\ell_*(\bar{c})$ of \eqref{smfit04} with $\ell'(\bar{c})\le x^0(\bar{c})$. Then by \eqref{dec01} and \eqref{def-Vl} it follows that 
\begin{align}\label{ineq}
V^{\ell'}(\,\cdot\,,\bar{c})\ge V(\,\cdot\,,\bar{c})=V^{\ell_*}(\,\cdot\,,\bar{c})\qquad\text{ on $\RR$,}
\end{align}
$V^{\ell'}(\,\cdot\,,\bar{c})\in C^1(\RR)$ and $V^{\ell'}_{xx}(\,\cdot\,,\bar{c})\in L^\infty_{loc}(\RR)$ by the same arguments as in $1.$\ above. By construction $V^{\ell'}$ solves \eqref{eq:Vell1} and \eqref{eq:Vell2} with $\ell_*$ replaced by $\ell'$.

Assume for example that $\ell'(\bar{c})< \ell_*(\bar{c})$, take $x<\ell'(\bar{c})$ and set $\sigma^*_{\ell}:=\inf\big\{t\ge0\,:\,X^x_t\ge \ell_*(\bar{c})\big\}$, then an application of It\^o-Tanaka's formula gives (up to a localisation argument as in $1.$\ above)
\begin{align}\label{uniq01}
\EE  \left[
e^{-\lambda\sigma_{\ell}^*} V^{\ell'} \big( X^x_{\sigma^*_{\ell}}, \bar{c} \big)
\right]& = V^{\ell'}(x,\bar{c})+\EE\Big[\int^{\sigma^*_{\ell}}_0{e^{-\lambda t}\big(\mathbb{L}_X-\lambda\big)
V^{\ell'}\big(X^x_t,\bar{c}\big)dt}\Big]\\
& = V^{\ell'}(x,\bar{c})+\EE\Big[\int^{\sigma^*_{\ell}}_0{e^{-\lambda t}\big(\mathbb{L}_X-\lambda\big)
\big(U\big(X^x_t,\bar{c}\big)-P_0\big)\mathds{1}_{\{X^x_t<\ell'(\bar{c})\}}dt}\Big]\nonumber
\end{align}
and
\begin{align}\label{uniq02}
\EE \left[
e^{-\lambda\sigma^*_{\ell}} V \big( X^x_{\sigma^*_{\ell}}, \bar{c} \big)
\right]& = V(x,\bar{c})+\EE\Big[\int^{\sigma^*_{\ell}}_0{e^{-\lambda t}\big(\mathbb{L}_X-\lambda\big)
\big(U\big(X^x_t,\bar{c}\big)-P_0\big)dt}\Big].
\end{align}
Recall that $V^{\ell'}(X^x_{\sigma^*_{\ell}},\bar{c})\ge V(X^x_{\sigma^*_{\ell}},\bar{c})$ by \eqref{ineq} and that for $x<\ell'(\bar{c})\le \ell_*(\bar{c})$ one has $V(x,\bar{c})=V^{\ell'}(x,\bar{c})=U(x,\bar{c})-P_0$, hence subtracting \eqref{uniq02} from \eqref{uniq01} we get
\begin{align}\label{eq:contr1}
-\EE\left[\int^{\sigma^*_{\ell}}_0{e^{-\lambda t}\big(\mathbb{L}_X-\lambda\big)
\big(U\big(X^x_t,\bar{c}\big)-P_0\big)\mathds{1}_{\{\ell'(\bar{c})<X^x_t<\ell_*(\bar{c})\}}dt}\right] \ge 0.
\end{align}
By the continuity of paths of $X^x$ we must have $\sigma^*_\ell>0$, $\PP$-a.s.~and since the law of $X$ is absolutely continuous with respect to the Lebesgue measure we also have $\PP\big(\{\ell'(\bar{c})<X^x_t<\ell_*(\bar{c})\}\big)>0$ for all $t > 0$. Therefore \eqref{eq:contr1} and \eqref{eq:Vell2} lead to a contradiction and we conclude that $\ell'(\bar{c})\ge \ell_*(\bar{c})$.

Let us now assume that $\ell'(\bar{c})> \ell_*(\bar{c})$ and take $x\in\big(\ell_*(\bar{c}),\ell'(\bar{c})\big)$. We recall the stopping time $\tau^*$ of \eqref{eq:opt-stt} and again we use It\^o-Tanaka's formula to obtain
\begin{align}\label{uniq03}
\EE\left[e^{-\lambda\tau^*}V\big(X^x_{\tau^*},\bar{c}\big)\right]=V(x,\bar{c})
\end{align}
and
\begin{align}\label{uniq04}
\EE \left[
e^{-\lambda\tau^*} V^{\ell'} \big( X^x_{\tau^*}, \bar{c} \big)
\right]=V^{\ell'}(x,\bar{c})+\EE\bigg[\int^{\tau^*}_0{e^{-\lambda t}\big(\mathbb{L}_X-\lambda\big)
\big(U\big(X^x_t,\bar{c}\big)-P_0\big)\mathds{1}_{\{X^x_t<\ell'(\bar{c})\}}dt}\bigg]
\end{align}
Now, we have $V(x,\bar{c})\le V^{\ell'}(x,\bar{c})$ by \eqref{ineq} and $V^{\ell'} \big( X^x_{\tau^*}, \bar{c} \big)=V\big(X^x_{\tau^*},\bar{c}\big)=U(\ell_*(\bar{c}),\bar{c})-P_0$, $\PP$-a.s.~by construction, since $\ell'(\bar{c})>\ell_*(\bar{c})$ and $X$ is positively recurrent (cf.~Appendix \ref{factsOU}). Therefore subtracting \eqref{uniq03} from \eqref{uniq04} gives
\begin{align}\label{eq:contr2}
\EE\Big[\int^{\tau^*}_0{e^{-\lambda t}\big(\mathbb{L}_X-\lambda\big)
\big(U\big(X^x_t,\bar{c}\big)-P_0\big)\mathds{1}_{\{\ell_*(\bar{c})<X^x_t<\ell'(\bar{c})\}}dt}\Big]\le0.
\end{align}
Arguments analogous to those following \eqref{eq:contr1} can be applied to \eqref{eq:contr2} to find a contradiction. Then we have $\ell'(\bar{c})= \ell_*(\bar{c})$ and by the arbitrariness of $\bar{c}$ the first claim of the theorem follows.
\end{proof}

\begin{remark}
The arguments developed in this section hold for all $c\in[0,1]$. The reduction of \eqref{dec01} to the auxiliary problem of Section \ref{sec:aux1} is not necessary to provide an algebraic equation for the optimal boundary. Nonetheless, it seems convenient to resort to the auxiliary problem whenever possible due to its analytical and computational tractability. In contrast to Section \ref{sec:solaux}, here we cannot establish either the monotonicity or continuity of the optimal boundary $\ell_*$. 
\end{remark}

\section{The Case $\hat{c}>1$}
\label{sec:chatgo}
 
In what follows we assume that $\hat{c}>1$, i.e.~$k(c)<0$ for all $c\in[0,1]$. As pointed out in Proposition \ref{prop:backmat}-$ii)$ the solution of the control problem in this setting substantially departs from the one obtained for $\hat{c}<0$. Both the value function and the optimal control exhibit a structure that is fundamentally different, and we recall here some results from \cite[Sec.~3]{DeAFeMo14}.

The function $U$ has the following analytical representation:
\begin{align}\label{eq:Uanalyt2}
U(x,c)=\left\{
\begin{array}{ll}
\hs{-6pt}\displaystyle \tfrac{\psi_{\lambda}(x)}{\psi_{\lambda}(\gamma_*(c))}\hs{-2pt}\left[\gamma_*(c)(1\hs{-2pt}-\hs{-2pt}c)\hs{-2pt} -\hs{-2pt} \lambda\,\Phi(c)
\big(\tfrac{\gamma_*(c)-\mu}{\lambda+\theta}\hs{-1pt}+\hs{-1pt}\tfrac{\mu}{\lambda}\big)\right] 
\hs{-2pt}+\hs{-2pt}\lambda\,\Phi(c)\hs{-2pt}
\left[\tfrac{x-\mu}{\lambda+\theta}\hs{-1pt}+\hs{-1pt}\tfrac{\mu}
{\lambda}\right], & \text{for $x<{\gamma_*(c)}$} \\[+12pt]
\hs{-6pt} \displaystyle x(1-c), & \text{for $x\ge \gamma_*(c)$}
\end{array}
\right.
\end{align}
with $\gamma_*$ as in Proposition \ref{prop:backmat}-$ii)$. In this setting $U$ is less regular than the one for the case of $\hat{c}<0$, in fact here we only have $U(\,\cdot\,,c)\in W^{2,\infty}_{loc}(\RR)$ for all $c\in[0,1]$ (cf.~Proposition \ref{prop:backmat}-$ii)$) and hence we expect $x\mapsto\mathcal{L}(x,c)+\lambda P_0:=(\mathbb{L}_X-\lambda)(U-P_0)(x,c)$ to have a discontinuity at the optimal boundary $\gamma_*(c)$. For $c\in[0,1]$ we define
\begin{align}\label{def:deltaL}
\Delta^{\mathcal{L}}(x,c):=\mathcal{L}(x+,c)-\mathcal{L}(x-,c),\qquad x\in\RR,
\end{align}
where $\mathcal{L}(x+,c)$ denotes the right limit of $\mathcal{L}(\,\cdot\,,c)$ at $x$ and $\mathcal{L}(x-,c)$ its left limit.
\begin{prop}\label{prop:discont}
For each $c\in[0,1)$ the map $x\mapsto\mathcal{L}(x,c)+\lambda P_0$ is $C^\infty$ and strictly decreasing on $(-\infty,\gamma_*(c))$ and on $(\gamma_*(c),+\infty)$ whereas
\begin{align}
\Delta^{\mathcal{L}}(\gamma_*(c),c)=(1-c)\big[\theta\mu-(\lambda+\theta)\gamma_*(c)\big]+\lambda\gamma_*(c)\Phi(c)>0.
\end{align}

Moreover, define
\begin{equation}
\label{twozeros}
x^0_1(c):= \frac{P_0}{\Phi(c)}\quad\text{and}\quad x^0_2(c):= \frac{\theta\mu(1-c) + \lambda P_0}{(\lambda + \theta)(1-c)},\quad c\in[0,1);
\end{equation}
then for each $c\in[0,1)$ there are three possible settings, that is
\begin{enumerate}
	\item $\gamma_*(c) \le x^0_1(c)$ hence $\mathcal{L}(x,c)+\lambda P_0> 0$ if and only if $x< x^0_2(c)$;
	\item $\gamma_*(c) \ge x^0_2(c)$ hence $\mathcal{L}(x,c)+\lambda P_0>0$ if and only if $x< x^0_1(c)$;
	\item $x^0_1(c) < \gamma_*(c) < x^0_2(c)$ hence $\mathcal{L}(x,c)+\lambda P_0>0$ if and only if $x\in (-\infty,x^0_1(c))\cup(\gamma_*(c),x^0_2(c))$.
\end{enumerate}
\end{prop}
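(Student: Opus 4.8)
The plan is to compute $\mathcal{L}(x,c)+\lambda P_0 = (\mathbb{L}_X-\lambda)(U-P_0)(x,c)$ explicitly in the two regions $x<\gamma_*(c)$ and $x>\gamma_*(c)$ using the representation \eqref{eq:Uanalyt2}, then read off monotonicity, the sign of the jump, and the three cases by elementary algebra.

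First I would treat the region $x>\gamma_*(c)$, where $U(x,c)=x(1-c)$. Then $(\mathbb{L}_X-\lambda)(U-P_0)(x,c)=(1-c)\theta(\mu-x)-\lambda x(1-c)+\lambda P_0 = (1-c)\big[\theta\mu-(\lambda+\theta)x\big]+\lambda P_0$, which is affine and strictly decreasing in $x$ (since $1-c>0$ for $c\in[0,1)$), with its unique zero at $x^0_2(c)=\frac{\theta\mu(1-c)+\lambda P_0}{(\lambda+\theta)(1-c)}$ as in \eqref{twozeros}. Next, for $x<\gamma_*(c)$, I would write $U(x,c)=A(c)\psi_\lambda(x)+\lambda\Phi(c)\big(\tfrac{x-\mu}{\lambda+\theta}+\tfrac{\mu}{\lambda}\big)$ with $A(c)$ the bracketed prefactor of $\psi_\lambda$; since $(\mathbb{L}_X-\lambda)\psi_\lambda=0$ and $\mathbb{L}_X$ applied to the affine term $\tfrac{\lambda\Phi(c)}{\lambda+\theta}(x-\mu)+\mu\Phi(c)$ gives $\theta(\mu-x)\cdot\tfrac{\lambda\Phi(c)}{\lambda+\theta}-\lambda\big[\tfrac{\lambda\Phi(c)}{\lambda+\theta}(x-\mu)+\mu\Phi(c)\big]$, one obtains after simplification $(\mathbb{L}_X-\lambda)U(x,c)=-\lambda x\Phi(c)$ on $x<\gamma_*(c)$ — indeed $U$ solves the HJB equation \eqref{HJB-U} in the inaction set, so $(\mathbb{L}_X-\lambda)U=-\lambda x\Phi(c)$ there automatically. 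Hence $\mathcal{L}(x,c)+\lambda P_0=\lambda\big(P_0-x\Phi(c)\big)$ for $x<\gamma_*(c)$, which is $C^\infty$, strictly decreasing (as $\Phi(c)>0$ for $c\in[0,1)$ by strict convexity and $\Phi(1)=0$), with unique zero $x^0_1(c)=P_0/\Phi(c)$. This already gives the $C^\infty$/strict-monotonicity claim on each of the two half-lines.

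For the jump, I would subtract the left and right limits at $\gamma_*(c)$: using continuity of $U$ and $U_x$ at $\gamma_*(c)$ (smooth fit, from Proposition~\ref{prop:backmat}-$ii)$), only the second-derivative term contributes, so $\Delta^{\mathcal{L}}(\gamma_*(c),c)=\tfrac{1}{2}\sigma^2\big(U_{xx}(\gamma_*(c)+,c)-U_{xx}(\gamma_*(c)-,c)\big)$, equivalently $\Delta^{\mathcal{L}}(\gamma_*(c),c)=\big[\lambda(P_0-x\Phi(c))\big]_{x=\gamma_*(c)}$ minus $\big[(1-c)(\theta\mu-(\lambda+\theta)x)+\lambda P_0\big]_{x=\gamma_*(c)}$, wait — more cleanly, $\Delta^{\mathcal{L}}(\gamma_*(c),c)=\mathcal{L}(\gamma_*(c)+,c)-\mathcal{L}(\gamma_*(c)-,c)=\big[(1-c)(\theta\mu-(\lambda+\theta)\gamma_*(c))+\lambda P_0\big]-\big[\lambda P_0-\lambda\gamma_*(c)\Phi(c)\big]=(1-c)\big[\theta\mu-(\lambda+\theta)\gamma_*(c)\big]+\lambda\gamma_*(c)\Phi(c)$, which is exactly the claimed expression. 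To see it is strictly positive I would invoke the lower bound $\gamma_*(c)\ge\tilde x(c)\vee\overline{x}_0(c)$ from Proposition~\ref{prop:backmat}-$ii)$: from $\gamma_*(c)\ge\overline{x}_0(c)=\theta\mu\Phi(c)/\zeta(c)$ with $\zeta(c)=(\lambda+\theta)(1-c)-\lambda\Phi(c)>0$ on $[0,1)$, one deduces $\gamma_*(c)\zeta(c)\ge\theta\mu\Phi(c)$, i.e. $\gamma_*(c)(\lambda+\theta)(1-c)-\lambda\gamma_*(c)\Phi(c)\ge\theta\mu\Phi(c)\ge 0$; combining with $\gamma_*(c)\ge\tilde x(c)=\theta\mu(1-c)/\zeta(c)$ I would rearrange to get precisely $(1-c)(\theta\mu-(\lambda+\theta)\gamma_*(c))+\lambda\gamma_*(c)\Phi(c)$ — this requires a short sign-tracking computation but no subtlety. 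I expect this positivity argument to be the main (minor) obstacle, since it is the only place where one must use the quantitative bounds on $\gamma_*$ rather than a direct formula.

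Finally, for the three cases I would compare $\gamma_*(c)$ with the two zeros $x^0_1(c),x^0_2(c)$. Since $\mathcal{L}(\cdot,c)+\lambda P_0$ equals $\lambda(P_0-x\Phi(c))$ (zero at $x^0_1$, positive iff $x<x^0_1$) on $(-\infty,\gamma_*(c))$ and equals the affine function (zero at $x^0_2$, positive iff $x<x^0_2$) on $(\gamma_*(c),\infty)$, and the jump at $\gamma_*(c)$ is \emph{upward}: if $\gamma_*(c)\le x^0_1(c)$ then $P_0-\gamma_*(c)\Phi(c)\ge 0$ so the left piece is nonnegative up to $\gamma_*(c)$ and positivity of the whole function is governed by the right piece, i.e. by $x<x^0_2(c)$ (here one checks $x^0_2(c)>\gamma_*(c)$, which follows from the upward jump being nonnegative at $\gamma_*(c)$ together with strict decrease of the affine piece); if $\gamma_*(c)\ge x^0_2(c)$ then the right piece is already negative at $\gamma_*(c)^+$, hence stays negative, and positivity is governed by the left piece, i.e. $x<x^0_1(c)$ (and $x^0_1(c)<\gamma_*(c)$ follows symmetrically); in the intermediate case $x^0_1(c)<\gamma_*(c)<x^0_2(c)$ the left piece is positive on $(-\infty,x^0_1(c))$ then negative on $(x^0_1(c),\gamma_*(c))$, the upward jump at $\gamma_*(c)$ makes it positive again on $(\gamma_*(c),x^0_2(c))$ and finally negative on $(x^0_2(c),\infty)$, giving the stated disconnected positivity set. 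These case distinctions are routine once the two explicit expressions and the sign of the jump are in hand. $\quad\Box$
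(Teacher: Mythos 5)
Your overall strategy — compute $(\mathbb{L}_X-\lambda)(U-P_0)$ explicitly on each side of $\gamma_*(c)$ from \eqref{eq:Uanalyt2} (or from the HJB equation), read off smoothness and strict decrease of the two affine pieces, identify the two zeros $x^0_1,x^0_2$, compute the jump by subtraction, and then do the case analysis — is exactly the paper's route, and your computation of $\mathcal L+\lambda P_0$ on the two half-lines, of $\Delta^{\mathcal L}(\gamma_*(c),c)$, and the three-case discussion are all correct.

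However, the one place you yourself flagged as the "main (minor) obstacle" contains a genuine sign error. You assert $\zeta(c)=(\lambda+\theta)(1-c)-\lambda\Phi(c)>0$ on $[0,1)$, but Proposition~\ref{prop:discont} sits in the regime $\hat{c}>1$, where $k(\,\cdot\,)<0$ on $[0,1]$ and hence $\zeta(c)=\int_c^1 k(y)\,dy<0$ for every $c\in[0,1)$. With that wrong sign, multiplying the inequality $\gamma_*(c)\ge\overline{x}_0(c)=\theta\mu\Phi(c)/\zeta(c)$ by $\zeta(c)$ goes the wrong way, and the chain you sketch would lead to $\Delta^{\mathcal L}(\gamma_*(c),c)\le 0$, the opposite of what is needed. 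The clean (and intended) argument is shorter and uses only $\tilde{x}$: expanding,
\[
\Delta^{\mathcal L}(\gamma_*(c),c)=\theta\mu(1-c)-\gamma_*(c)\big[(\lambda+\theta)(1-c)-\lambda\Phi(c)\big]=\theta\mu(1-c)-\gamma_*(c)\,\zeta(c),
\]
and since $\zeta(c)<0$ and $\gamma_*(c)>\tilde{x}(c)=\theta\mu(1-c)/\zeta(c)$ by Proposition~\ref{prop:backmat}-$ii)$, one gets $\gamma_*(c)\,\zeta(c)<\theta\mu(1-c)$, hence $\Delta^{\mathcal L}(\gamma_*(c),c)>0$. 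The detour through $\overline{x}_0(c)$ in your draft is unnecessary. Everything else in your proposal is correct and matches the paper's (very terse) proof.
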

\begin{proof}
The first claim follows by \eqref{eq:Uanalyt2} and the sign of $\Delta^\mathcal{L}(\gamma_*(c),c)$ may be verified by recalling that $\gamma_*(c)\ge \tilde{x}(c)$ (cf.~Proposition \ref{prop:backmat}-$ii)$). Checking $1$, $2$ and $3$ is matter of simple algebra.
\end{proof}
We may use Proposition \ref{prop:discont} to expand the discussion in Section \ref{sec:heuristics}. In particular, from the first and second parts we see that if either $\gamma_*(c) \ge x^0_2(c)$ or $\gamma_*(c) \le x^0_1(c)$ then the optimal stopping strategy must be of single threshold type. On the other hand, for $x^0_1(c) < \gamma_*(c) < x^0_2(c)$, as discussed in Section \ref{sec:heuristics}, there are two possible shapes for the continuation set. This is setting for the preliminary discussion which follows.

If the size of the interval $(\gamma_*(c),x^0_2(c))$ is ``small'' and/or the absolute value of $\mathcal{L}(x,c)+\lambda P_0$ in $(\gamma_*(c),x^0_2(c))$ is ``small'' compared to its absolute value in $(x^0_1(c),\gamma_*(c))\cup(x^0_2(c),+\infty)$ then, although continuation incurs a positive cost when the process is in the interval $(\gamma_*(c),x^0_2(c))$, the expected reward from subsequently entering the neighbouring intervals (where $\mathcal{L}(x,c)+\lambda P_0<0$) is sufficiently large that continuation may nevertheless be optimal in $(\gamma_*(c),x^0_2(c))$ so that there is a single lower optimal stopping boundary, which lies below $x^0_1(c)$ (see Figures~\ref{fig:examples} and \ref{fig:IG-Plot-1}).

\begin{figure}[htbp]
	\centering
	\begin{subfigure}[b]{0.45\textwidth}
		\includegraphics[width=\textwidth,height=0.175\textheight]{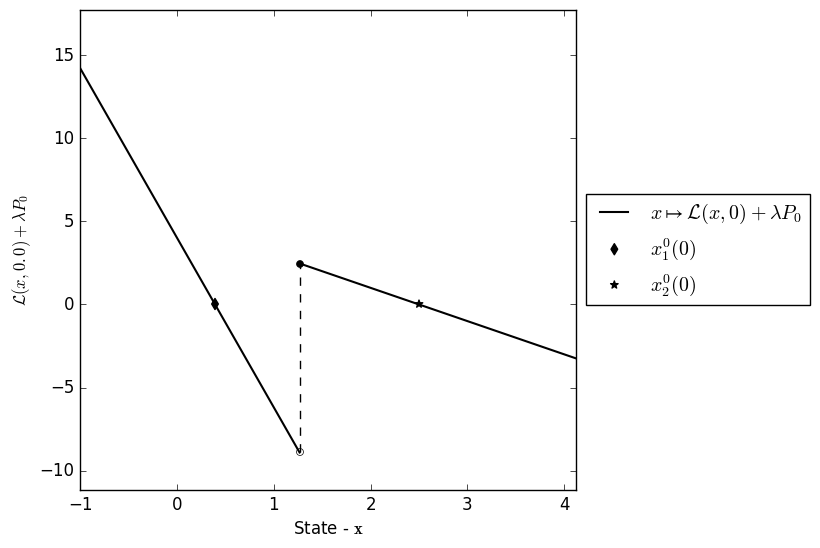}
		\caption{Illustration when $c = 0$}
		\label{fig:IG-Plot-1}
	\end{subfigure}
	~ 
	\begin{subfigure}[b]{0.45\textwidth}
		\includegraphics[width=\textwidth,height=0.175\textheight]{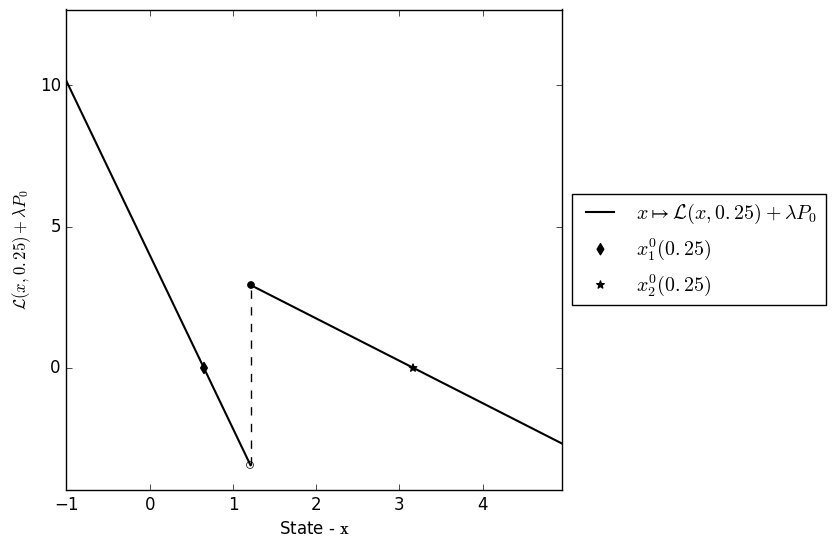}
		\caption{Illustration when $c = 0.25$}
		\label{fig:IG-Plot-2}
	\end{subfigure}
	\caption{The function $x \mapsto \mathcal{L}(x,c)+\lambda P_0$ changes sign in both plots but, with the visual aid of Figure~\ref{fig:examples}, the stopping region is connected in (a) and is disconnected in (b).}
	\label{fig:IG-Plots}
\end{figure}

If the size of $(\gamma_*(c),x^0_2(c))$ is ``big'' and/or the absolute value of $\mathcal{L}(x,c)+\lambda P_0$ in $(\gamma_*(c),x^0_2(c))$ is ``big'' compared to its absolute value in $(x^0_1(c),\gamma_*(c))\cup(x^0_2(c),+\infty)$ then we may find a portion of the stopping set below $x^0_1(c)$ and another portion inside the interval $(\gamma_*(c),x^0_2(c))$. In this case the loss incurred by continuation inside a certain subset of $(\gamma_*(c),x^0_2(c))$ may be too great to be mitigated by the expected benefit of subsequent entry into the profitable neighbouring intervals and it becomes optimal to stop at once. In the third case of Proposition \ref{prop:discont}, the continuation and stopping regions may therefore be disconnected sets (see Figures~\ref{fig:examples} and \ref{fig:IG-Plot-2}).

To make this discussion rigorous let us now recall $\CC_V^c$ and $\DD_V^c$ from \eqref{def:CD}. Note that for any fixed $c\in[0,1)$ and arbitrary stopping time $\tau$ the map $x\mapsto \EE[e^{-\lambda\tau}\big(U(X^x_\tau,c)-P_0\big)]$ is continuous, hence $x\mapsto V(x,c)$ is upper semicontinuous (being the infimum of continuous functions). Recall that $X$ is positively recurrent and therefore it hits any point of $\RR$ in finite time with probability one (see Appendix \ref{factsOU} for details). Hence according to standard  optimal stopping theory, if $\DD_V^c\neq\emptyset$ the first entry time of $X$ in $\DD_V^c$ is an optimal stopping time (cf.~e.g.~\cite[Ch.~1, Sec.~2, Corollary 2.9]{Pes-Shir}).
\begin{prop}\label{prop:struc-stop}
Let $c\in[0,1)$ be fixed. Then
\begin{itemize}
\item[$~~i)$] if $\gamma_*(c) \ge x^0_2(c)$, there exists $\ell_*(c)\in(-\infty,x^0_1(c))$ such that $\DD_V^c=(-\infty,\ell_*(c)]$ and $\tau_*=\inf\{t\ge 0\,:\,X^x_t\le \ell_*(c)\}$ is optimal in \eqref{dec01}
\item[$~ii)$] if $\gamma_*(c) \le x^0_1(c)$, there exists $\ell_*(c)\in(-\infty,x^0_2(c))$ such that $\DD_V^c=(-\infty,\ell_*(c)]$ and $\tau_*=\inf\{t\ge 0\,:\,X^x_t\le \ell_*(c)\}$ is optimal in \eqref{dec01}
\item[$iii)$] if $x^0_1(c) < \gamma_*(c) < x^0_2(c)$, there exists $\ell^{(1)}_*(c)\in(-\infty,x^0_1(c))$ such that $\DD_V^c\cap(-\infty,\gamma_*(c)]=(-\infty,\ell^{(1)}_*(c)]$. Moreover, either $(a)$: $\DD_V^c\cap[\gamma_*(c),\infty)=\emptyset$ and $\tau_*=\inf\{t\ge 0\,:\,X^x_t\le \ell^{(1)}_*(c)\}$ is optimal in \eqref{dec01}, or $(b)$: there exist $\ell_*^{(2)}(c)\le\ell_*^{(3)}(c)\le x^0_{2}(c)$ such that $\DD_V^c\cap[\gamma_*(c),\infty)=[\ell_*^{(2)}(c),\ell_*^{(3)}(c)]$ (with the convention that if $\ell_*^{(2)}(c)=\ell_*^{(3)}(c)=:\ell_*(c)$ then $\DD_V^c\cap[\gamma_*(c),\infty)=\{\ell_*(c)\}$) and the stopping time
\begin{align}\label{eq:ell*2}
\tau^{(II)}_*:=\inf\{t\ge0\,:\,X^x_t\le\ell^{(1)}_*(c)\:\:\text{or}\:\:X^x_t\in[\ell_*^{(2)}(c),\ell_*^{(3)}(c)]\}
\end{align}
is optimal in \eqref{dec01}.
\end{itemize}
\end{prop}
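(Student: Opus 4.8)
The plan is to recast \eqref{dec01} as a running-cost optimal stopping problem and then exploit the sign structure of Proposition~\ref{prop:discont}, the concavity of $V(\,\cdot\,,c)$ (Lemma~\ref{lem:concave}), and the strong Markov property of $X$. First I would establish, by applying the generalised It\^o formula to $s\mapsto e^{-\lambda s}U(X^x_s,c)$ (legitimate since $U(\,\cdot\,,c)\in W^{2,\infty}_{loc}(\RR)$, the discontinuity of $\mathcal{L}(\,\cdot\,,c)$ at $\gamma_*(c)$ being $ds$-negligible) and localising as in \eqref{aux01}--\eqref{eq:Vupbdd00}, the identity
\begin{align*}
\Psi(x,c):=V(x,c)-\big(U(x,c)-P_0\big)=\inf_{\tau\ge0}\EE\Big[\int_0^\tau e^{-\lambda s}\big(\mathcal{L}(X^x_s,c)+\lambda P_0\big)\,ds\Big]\le0 ,
\end{align*}
so that $\DD^c_V=\{x:\Psi(x,c)=0\}$. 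Since it is never optimal to stop where $\mathcal{L}(\,\cdot\,,c)+\lambda P_0<0$ (the exit-time argument of Section~\ref{sec:heuristics}), Proposition~\ref{prop:discont} yields $\DD^c_V\subseteq(-\infty,x^0_1(c)]$ in case~$i)$, $\DD^c_V\subseteq(-\infty,x^0_2(c)]$ in case~$ii)$, and $\DD^c_V\subseteq(-\infty,x^0_1(c)]\cup[\gamma_*(c),x^0_2(c)]$ in case~$iii)$.

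Next I would prove the crucial monotonicity fact: if $x_0\in\DD^c_V$ lies below the \emph{first} zero of $\mathcal{L}(\,\cdot\,,c)+\lambda P_0$ (i.e.\ $x_0\le x^0_1(c)$ in cases~$i)$,~$iii)$, and $x_0\le x^0_2(c)$ in case~$ii)$), then $(-\infty,x_0]\subseteq\DD^c_V$. Indeed, for $x<x_0$ and any stopping time $\tau$, set $T:=\inf\{t:X^x_t=x_0\}$ (a.s.\ finite by recurrence) and split $\int_0^\tau=\int_0^{\tau\wedge T}+\int_{\tau\wedge T}^\tau$: on $[0,\tau\wedge T)$ the path stays strictly below $x_0$, where $\mathcal{L}(\,\cdot\,,c)+\lambda P_0>0$, so that part is nonnegative; by the strong Markov property at $T$ the remaining part is $\ge\EE[\mathds{1}_{\{T<\tau\}}e^{-\lambda T}\Psi(x_0,c)]=0$, whence $\Psi(x,c)\ge0$, i.e.\ $x\in\DD^c_V$. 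Thus $\DD^c_V\cap(-\infty,x^0_1(c)]$ (resp.\ $\cap(-\infty,x^0_2(c)]$) is a half-line $(-\infty,\ell]$ or empty; emptiness is excluded because if $\DD^c_V\cap(-\infty,\gamma_*(c)]=\emptyset$ then $(\mathbb{L}_X-\lambda)V=0$ on $(-\infty,\gamma_*(c))$, so $V(\,\cdot\,,c)=A\phi_\lambda+B\psi_\lambda$ there, and the linear growth of $V(\,\cdot\,,c)$ (from $|U|\le C(1+|x|)$, cf.~Proposition~\ref{prop:backmat} and Lemma~\ref{lem:uiX}) together with the superpolynomial growth of $\phi_\lambda$ at $-\infty$ force $A=0$, giving $V(x,c)=B\psi_\lambda(x)\to0$ as $x\to-\infty$, which contradicts $V\le U-P_0\to-\infty$ (cf.~\eqref{eq:Uanalyt2}). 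In cases~$i)$ and~$ii)$ this already gives $\DD^c_V=(-\infty,\ell_*(c)]$; the strict inequality $\ell_*(c)<x^0_{1}(c)$ (resp.\ $<x^0_2(c)$) then follows from the $C^1$ smooth fit of $V$ at $\ell_*(c)$ and the strict sign $\tfrac{d}{dx}(\mathcal{L}(\,\cdot\,,c)+\lambda P_0)<0$ at the relevant zero (which force the third $x$-derivative of $(U-P_0)-V$ to be strictly negative on the $\CC^c_V$-side of that zero, contradicting $V\le U-P_0$), and optimality of $\tau_*$ is the quoted first-entry statement.

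For case~$iii)$ the left piece $\DD^c_V\cap(-\infty,\gamma_*(c)]$ will equal $(-\infty,\ell^{(1)}_*(c)]$ with $\ell^{(1)}_*(c)\in(-\infty,x^0_1(c))$ once I show $\gamma_*(c)\notin\DD^c_V$ (then it coincides with $\DD^c_V\cap(-\infty,x^0_1(c)]$, a half-line with strict right endpoint exactly as above, since $x^0_1(c)$ sits on the smooth branch of $U$). To rule out $\gamma_*(c)\in\DD^c_V$ I would argue locally: the adjacent interval $(x^0_1(c),\gamma_*(c))$ is in $\CC^c_V$, so $(\mathbb{L}_X-\lambda)V=0$ there, and combining this with $C^1$ smooth fit at $\gamma_*(c)$, with $V=U-P_0$ at $\gamma_*(c)$, and with the jump of $U_{xx}$ at $\gamma_*(c)$, a short computation gives $\big((U-P_0)-V\big)_{xx}(\gamma_*(c)-,c)=\tfrac{2}{\sigma^2}\big(\mathcal{L}(\gamma_*(c)-,c)+\lambda P_0\big)=\tfrac{2\lambda}{\sigma^2}\big(P_0-\gamma_*(c)\Phi(c)\big)<0$ (using $\gamma_*(c)>x^0_1(c)$), so $(U-P_0)-V<0$ just below $\gamma_*(c)$, contradicting $V\le U-P_0$. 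For the upper component, on $[\gamma_*(c),\infty)$ one has $U(x,c)-P_0=x(1-c)-P_0$ affine, so $(U-P_0)-V$ is \emph{convex} there (affine minus the concave map $V(\,\cdot\,,c)$); being nonnegative and vanishing at any two points $p<q$ of $\DD^c_V\cap[\gamma_*(c),\infty)$ it must vanish on $[p,q]$, so this set is an interval, hence either empty (case~$(a)$) or a closed interval $[\ell^{(2)}_*(c),\ell^{(3)}_*(c)]\subseteq(\gamma_*(c),x^0_2(c)]$, possibly a single point (case~$(b)$). In either case $\DD^c_V$ is nonempty and closed, so the first entry time of $X$ into it, i.e.\ $\tau_*$ in case~$(a)$ and $\tau^{(II)}_*$ in case~$(b)$, is optimal in \eqref{dec01}.

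The hard part will be the local analysis at the discontinuity $\gamma_*(c)$ of $\mathcal{L}(\,\cdot\,,c)$: justifying $C^1$ smooth fit of $V$ there despite the second-derivative jump of $U$, and extracting from that jump the sign of the one-sided second derivative $\big((U-P_0)-V\big)_{xx}(\gamma_*(c)-,c)$ — this is precisely what forces $\gamma_*(c)\in\CC^c_V$ and hence the clean ``kinked'' decomposition of $\DD^c_V$ in case~$iii)$. The remaining ingredients (the strong Markov splitting, the fundamental-solution dichotomy used for nonemptiness, and the convexity argument for the upper component) are routine given Proposition~\ref{prop:discont} and Lemma~\ref{lem:concave}.
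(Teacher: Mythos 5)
Your proof is essentially correct and its skeleton matches the paper's: the running-cost identity (the paper's \eqref{set00}), the sign structure from Proposition~\ref{prop:discont} to confine $\DD^c_V$, a strong-Markov down-closure argument for the lower component, and first-entry optimality into a nonempty closed stopping set. You do make three genuinely different choices worth noting. First, for nonemptiness of $\DD^c_V\cap(-\infty,\gamma_*(c)]$ you argue via the fundamental-solution decomposition $V=A\phi_\lambda+B\psi_\lambda$ plus growth rates; the paper's Step~1 instead bounds $V-(U-P_0)$ below by $R(x,c)$ and concludes only $\DD^c_V\neq\emptyset$ — its Step~2 opener (``it is optimal to stop at once for sufficiently negative $x$'') slightly overstates what Step~1 proved, so your version is arguably more airtight. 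Second, for the interval structure of $\DD^c_V\cap[\gamma_*(c),\infty)$ you exploit that $U-P_0$ is affine there so $(U-P_0)-V$ is convex by Lemma~\ref{lem:concave}; the paper reruns the same running-cost/strong-Markov contradiction as in its Step~2. Your convexity argument is shorter and gives closedness and connectedness in one stroke. Third, you explicitly derive the strict inclusions $\ell^{(1)}_*(c)<x^0_1(c)$ and $\gamma_*(c)\in\CC^c_V$ via one-sided second/third-derivative expansions combined with smooth fit, whereas the paper's proof only obtains $\ell^{(1)}_*(c)\le x^0_1(c)$ and leaves $\gamma_*(c)\notin\DD^c_V$ implicit (even though the stated proposition needs both). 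The only soft spot in your argument, which you yourself flag, is the appeal to $C^1$ smooth fit of $V$ at the putative boundary point $\gamma_*(c)$ where $U_{xx}$ jumps; this does hold (by the same mean-value/concavity argument the paper runs in Theorem~\ref{thm:singlebd}, which relies only on Lipschitzness of $U_x$ and continuity of paths, not on $U\in C^2$) but should be carried out rather than merely invoked, since it is the crux of the exclusion $\gamma_*(c)\in\CC^c_V$.
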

\begin{proof}
We provide a detailed proof only for $iii)$ as the other claims follow by analogous arguments. Let us fix $c\in[0,1)$ and assume $x^0_1(c) < \gamma_*(c) < x^0_2(c)$.
\vspace{+4pt}

Step $1.$ \enskip We start by proving that $\DD_V^c\neq\emptyset$. By localisation and an application of It\^o's formula in its generalised version (cf.~\cite[Ch.~8]{FlemingSoner}) to \eqref{dec01} and recalling Proposition \ref{prop:discont} we get
\begin{align}\label{set00}
V(x,c)=U(x,c)-P_0+\inf_\tau\Big[\int_0^\tau{e^{-\lambda t}\Big(\lambda P_0+\mathcal{L}(X^x_t,c)\Big)dt}\Big]\quad\text{for $x\in\RR$}.
\end{align}
Arguing by contradiction we assume that $\DD_V^c=\emptyset$ and hence the optimum in \eqref{set00} is obtained by formally setting $\tau=+\infty$. Moreover by recalling that $U$ solves \eqref{HJB-U} we observe that $\mathcal{L}(X^x_t,c)\ge-X^x_t\Phi(c)$ $\PP$-a.s.~for all $t\ge0$ and \eqref{set00} gives
\begin{align}\label{set01}
V(x,c)\ge U(x,c)-P_0+R(x,c)\qquad\text{for $x\in\RR$}
\end{align}
where
\begin{align}\label{set02}
R(x,c):=\EE\bigg[\int_0^\infty{e^{-\lambda t}\lambda\Big(P_0-X^x_t\Phi(c)\Big)dt}\bigg]\qquad\text{for $x\in\RR$}.
\end{align}
It is not hard to see from \eqref{set02} that for sufficiently negative values of $x$ we have $R(x,c)>0$ and \eqref{set01} implies that $\DD_V^c$ cannot be empty. \vspace{+4pt}

Step $2.$ \enskip Here we prove that $\DD_V^c\cap(-\infty,\gamma_*(c)]=(-\infty,\ell^{(1)}_*(c)]$ for suitable $\ell^{(1)}_*(c)\le x^0_1(c)$. The previous step has already shown that it is optimal to stop at once for sufficiently negative values of $x$. It now remains to prove that if $x\in \DD_V^c\cap(-\infty,\gamma_*(c)]$ then $x'\in \DD_V^c\cap(-\infty,\gamma_*(c)]$ for any $x'<x$. For this, fix $\bar{x}\in \DD_V^c\cap(-\infty,\gamma_*(c)]$ and let $x'<\bar{x}$. Note that the process $X^{x'}$ cannot reach a subset of $\RR$ where $\lambda P_0+\mathcal{L}(\,\cdot\,,c)<0$ (cf.~Proposition \ref{prop:discont}-$(3)$) without crossing $\bar{x}$ and hence entering $\DD_V^c$. Therefore, if $x'\in\CC_V^c$ and $\tau_*(x')$ is the associated optimal stopping time, i.e.~$\tau_*(x'):=\inf\{t\ge0\,:\,X^{x'}_t\in\DD^c_V\}$, we must have
\begin{align}\label{set05}
V(x',c)& =  U(x',c)-P_0+\EE\Big[\int_0^{\tau_*(x')} {e^{-\lambda t}\Big(\lambda P_0+\mathcal{L}(X^{x'}_t,c)\Big)dt}\Big]\ge U(x',c)-P_0,
\end{align}
 giving a contradiction and implying that $x'\in\DD_V^c$.
\vspace{+4pt}

Step $3.$ \enskip We now aim to prove that if $\DD_V^c\cap[\gamma_*(c),\infty)\neq\emptyset$ then $\DD_V^c\cap[\gamma_*(c),\infty)=[\ell_*^{(2)}(c),\ell_*^{(3)}(c)]$ for suitable $\ell_*^{(2)}(c)\le\ell_*^{(3)}(c)\le x^0_{2}(c)$. The case of $\DD_V^c\cap[\gamma_*(c),\infty)$ containing a single point is self-explanatory. We then assume that there exist $x<x'$ such that $x,x'\in\DD_V^c\cap[\gamma_*(c),\infty)$ and prove that also $[x,x']\subseteq\DD_V^c\cap[\gamma_*(c),\infty)$.

Looking for a contradiction, let us assume that there exists $y\in(x,x')$ such that $y\in\CC_V^c$. The process $X^y$ cannot reach a subset of $\RR$ where $\lambda P_0+\mathcal{L}(\,\cdot\,,c)<0$ without leaving the interval $(x,x')$ (cf.~Proposition \ref{prop:discont}-$(3)$). Then, by arguing as in \eqref{set05}, with the associated optimal stopping time $\tau_*(y):=\inf\{t\ge0\,:\,X^{y}_t\in\DD^c_V\}$, we inevitably reach a contradiction. Hence the claim follows.
\end{proof}
\vs{+4pt}

Before proceeding further we clarify the dichotomy in part $iii)$ of Proposition \ref{prop:struc-stop}, as follows. Lemma \ref{Lemma:NecessaryAndSufficientConditionSingleBoundary} below characterises the subcases $iii)(a)$ and $iii)(b)$ via condition \eqref{eq:NecessaryAndSufficientConditionSingleBoundary}. Remark \ref{rem:red} then shows that this condition does nothing more than to compare the minima of two convex functions.

\begin{lemma}\label{Lemma:NecessaryAndSufficientConditionSingleBoundary}
Fix $c \in [0,1)$ and suppose that $x^0_1(c) < \gamma_*(c) < x^0_2(c)$. 
Then $\mathcal{D}_V^c \cap [\gamma_*(c),\infty)=\emptyset$ if and only if there exists $\ell_*(c)\in(-\infty,x^0_1(c))$ such that for every $x \geq \gamma_*(c)$:
\begin{equation}\label{eq:NecessaryAndSufficientConditionSingleBoundary}
\frac{U(x,c)-P_0}{\phi_{\lambda}\left(x\right)} > \frac{U\left(\ell_*(c),c\right)-P_0} {\phi_{\lambda}\left(\ell_*(c)\right)}.
\end{equation}
\end{lemma}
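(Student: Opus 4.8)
The plan is to exploit the structure already obtained in Proposition \ref{prop:struc-stop}-$iii)$ together with the explicit analytic form \eqref{eq:Uanalyt2} of $U$ on $(-\infty,\gamma_*(c))$, which makes $U(x,c)-P_0$ a solution of $(\mathbb{L}_X-\lambda)(\,\cdot\,)=-\lambda(P_0-x\Phi(c))$ there, while for $x\ge\gamma_*(c)$ one has $U(x,c)=x(1-c)$. The key observation is that the first entry time of $X$ into $\DD_V^c$ is optimal (as noted before the proposition, using positive recurrence of $X$), and by Step~2 of the proof of Proposition \ref{prop:struc-stop} the set $\DD_V^c\cap(-\infty,\gamma_*(c)]$ is always a half-line $(-\infty,\ell^{(1)}_*(c)]$ with $\ell^{(1)}_*(c)\le x^0_1(c)$.

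\emph{Necessity.} Suppose $\DD_V^c\cap[\gamma_*(c),\infty)=\emptyset$. Then $\DD_V^c=(-\infty,\ell_*(c)]$ with $\ell_*(c)=\ell^{(1)}_*(c)\le x^0_1(c)<\gamma_*(c)$, and $\tau_*=\inf\{t\ge0:X^x_t\le\ell_*(c)\}$ is optimal. For $x\ge\gamma_*(c)$ we thus have $V(x,c)=\EE[e^{-\lambda\tau_*}(U(X^x_{\tau_*},c)-P_0)]=(U(\ell_*(c),c)-P_0)\,\EE[e^{-\lambda\tau_*}]$, and since $X$ must pass through every point above $\ell_*(c)$ before hitting $\ell_*(c)$, the classical identity $\EE[e^{-\lambda\tau_*}]=\phi_\lambda(x)/\phi_\lambda(\ell_*(c))$ holds (this is exactly the computation behind Lemma \ref{lemma12}). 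Hence $V(x,c)=(U(\ell_*(c),c)-P_0)\phi_\lambda(x)/\phi_\lambda(\ell_*(c))$ for all $x\ge\gamma_*(c)$. Because $x\in\CC_V^c$ for every such $x$, we have $V(x,c)<U(x,c)-P_0$, which upon dividing by $\phi_\lambda(x)>0$ and using $U(\ell_*(c),c)-P_0<0$ (note $\ell_*(c)<x^0_1(c)$ forces this, since $U(\ell_*(c),c)-P_0$ and $\phi_\lambda$ have been shown to satisfy the relevant sign; in fact $U(\ell_*(c),c)-P_0\le 0$ is immediate from $V\le U-P_0$ combined with $V\ge$ the continuation value) gives precisely \eqref{eq:NecessaryAndSufficientConditionSingleBoundary}. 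A minor point to verify carefully is that $U(\ell_*(c),c)-P_0$ is strictly negative so that the inequality is strict rather than weak; this follows because if it vanished, then $V(\,\cdot\,,c)\equiv 0$ on $[\gamma_*(c),\infty)$ would contradict $V\ge U-P_0+R$ type lower bounds near the boundary, or more simply from $\ell_*(c)<x^0_1(c)=P_0/\Phi(c)$ together with the explicit formula for $U$ on $(-\infty,\gamma_*(c))$.

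\emph{Sufficiency.} Conversely, assume such an $\ell_*(c)\in(-\infty,x^0_1(c))$ exists satisfying \eqref{eq:NecessaryAndSufficientConditionSingleBoundary} for all $x\ge\gamma_*(c)$. Define the candidate $V^{\ell_*}$ by $V^{\ell_*}(x,c)=U(x,c)-P_0$ for $x\le\ell_*(c)$ and $V^{\ell_*}(x,c)=(U(\ell_*(c),c)-P_0)\phi_\lambda(x)/\phi_\lambda(\ell_*(c))$ for $x>\ell_*(c)$; this is the value associated with the single-threshold rule $\tau_{\ell_*}$, so $V^{\ell_*}\ge V$ trivially. For the reverse inequality I would run the It\^o--Tanaka verification argument exactly as in the proof of Theorem \ref{thm:UniqueSolutionSmoothFit-1}: one checks $V^{\ell_*}(\,\cdot\,,c)\in C^1$ (smooth fit at $\ell_*(c)$ — here I would need $\ell_*(c)$ to satisfy \eqref{smfit04}, or alternatively absorb the failure of $C^1$ into the local-time term and show its sign is favourable), that $(\mathbb{L}_X-\lambda)V^{\ell_*}=0$ on $(\ell_*(c),\infty)$ and equals $(\mathbb{L}_X-\lambda)(U-P_0)(x,c)$ for $x<\ell_*(c)$, which is $>0$ there by \eqref{signLU}-type reasoning since $\ell_*(c)<x^0_1(c)$ and Proposition \ref{prop:discont}; and crucially that $V^{\ell_*}\le U-P_0$ everywhere. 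The last inequality is automatic for $x\le\ell_*(c)$ and for $x\in[\gamma_*(c),\infty)$ is precisely the hypothesis \eqref{eq:NecessaryAndSufficientConditionSingleBoundary}; for $x\in(\ell_*(c),\gamma_*(c))$ it must be deduced, and here the cleanest route is: on $(\ell_*(c),\gamma_*(c))$ the function $U(\,\cdot\,,c)-P_0$ solves $(\mathbb{L}_X-\lambda)(\,\cdot\,)=-\lambda(P_0-x\Phi(c))\le 0$ (as $x<x^0_1(c)=P_0/\Phi(c)$ fails to hold in general — one must split at $x^0_1(c)$) while $V^{\ell_*}$ solves the homogeneous equation, so a comparison/maximum-principle argument on this bounded interval, using the boundary values at $\ell_*(c)$ (equality) and at $\gamma_*(c)$ (inequality from the hypothesis) plus convexity of $\phi_\lambda$, yields $V^{\ell_*}\le U-P_0$ throughout. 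Then the It\^o--Tanaka estimate gives $V^{\ell_*}\le V$, hence $V^{\ell_*}=V$, so $\DD_V^c=(-\infty,\ell_*(c)]$ and in particular $\DD_V^c\cap[\gamma_*(c),\infty)=\emptyset$.

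\emph{Main obstacle.} The delicate point is the sufficiency direction on the transitional interval $(\ell_*(c),\gamma_*(c))$: one does not get $V^{\ell_*}\le U-P_0$ there for free, and the sign of $(\mathbb{L}_X-\lambda)(U-P_0)$ changes within $(-\infty,\gamma_*(c))$ at $x^0_1(c)$ (by Proposition \ref{prop:discont}), so the naive comparison argument needs the extra input that $\ell_*(c)<x^0_1(c)$ and a careful handling of the two sub-intervals $(\ell_*(c),x^0_1(c))$ and $(x^0_1(c),\gamma_*(c))$. A secondary technical nuisance is justifying the smooth-fit / $C^1$-pasting of the candidate $V^{\ell_*}$ at $\ell_*(c)$ and the discontinuity of $(\mathbb{L}_X-\lambda)U$ at $\gamma_*(c)$, which is why It\^o--Tanaka (rather than plain It\^o) is the right tool, mirroring the proof of Theorem \ref{thm:UniqueSolutionSmoothFit-1}.
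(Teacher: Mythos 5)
Your necessity argument is essentially the paper's: apply the optimality of the single-threshold rule $\tau_*=\inf\{t\ge0:X^x_t\le\ell_*(c)\}$ (guaranteed by Proposition \ref{prop:struc-stop}-$iii)$ once $\DD_V^c\cap[\gamma_*(c),\infty)=\emptyset$), use $V(X_{\tau_*},c)=U(\ell_*(c),c)-P_0$ together with the Laplace-transform formula \eqref{hittingtimes}, and divide by $\phi_\lambda(x)>0$. The side discussion about whether $U(\ell_*(c),c)-P_0$ is strictly negative is unnecessary: the inequality $U(x,c)-P_0 > (U(\ell_*(c),c)-P_0)\,\phi_\lambda(x)/\phi_\lambda(\ell_*(c))$ already holds, and dividing by the positive quantity $\phi_\lambda(x)$ gives \eqref{eq:NecessaryAndSufficientConditionSingleBoundary} regardless of sign.

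For sufficiency, however, you have badly overshot the target, and this is where the gap lies. The lemma asks only that $\DD_V^c\cap[\gamma_*(c),\infty)=\emptyset$, i.e.\ that every $x\ge\gamma_*(c)$ is a continuation point. For that it suffices to exhibit \emph{one} stopping time whose payoff strictly beats $U(x,c)-P_0$, and the hypothesis \eqref{eq:NecessaryAndSufficientConditionSingleBoundary} hands you exactly that: for $x\ge\gamma_*(c)$,
\begin{equation*}
V(x,c)\le \EE_x\bigl[e^{-\lambda\tau_*}(U(X_{\tau_*},c)-P_0)\bigr]
= (U(\ell_*(c),c)-P_0)\,\frac{\phi_\lambda(x)}{\phi_\lambda(\ell_*(c))} < U(x,c)-P_0,
\end{equation*}
where $\tau_*$ is the (possibly suboptimal) threshold rule, the middle equality is again \eqref{hittingtimes}, and the last inequality is precisely \eqref{eq:NecessaryAndSufficientConditionSingleBoundary}. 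This three-line argument is the paper's proof. Instead, you set out to prove the much stronger statement $V=V^{\ell_*}$ on all of $\RR$ via a full It\^o--Tanaka verification, which forces you to establish $V^{\ell_*}\le U-P_0$ on the transitional interval $(\ell_*(c),\gamma_*(c))$ and to handle smooth fit at $\ell_*(c)$; you yourself flag these as unresolved obstacles and the comparison argument you sketch is not carried out. None of that is needed: the conclusion of the lemma is a statement about $[\gamma_*(c),\infty)$ only, and the upper bound $V\le V^{\ell_*}$ (trivial from the definition of $V$ as an infimum) restricted to that half-line already settles it. So the necessity half is fine, but your sufficiency half contains a genuine gap precisely because you chose an unnecessarily ambitious route.
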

\begin{proof}
$i).$ \emph{Necessity}. If $\mathcal{D}_V^c \cap [\gamma_*(c),\infty)=\emptyset$, then by Proposition \ref{prop:struc-stop}-$iii)$ there exists a point $\ell_*(c)\in(-\infty,x^0_1(c))$ such that $\mathcal{D}_V^c=(-\infty,\ell_*(c)]$. Let $x \geq \gamma_*(c)$ be arbitrary and notice that $V(x,c) < U(x,c)-P_0$ since the current hypothesis implies $x \in [\gamma_*(c),\infty) \subset \mathcal{C}^c_V$. According to Proposition \ref{prop:struc-stop}-$iii)$, the stopping time $\tau_*$ defined by
\begin{equation}\label{eq:CandidateOptimalStoppingTime}
\tau_* := \inf\{t\geq 0\,:\,X^x_t\leq \ell_*(c)\}
\end{equation}
is optimal in \eqref{dec01}. On the other hand, since $X$ has continuous sample paths and $\mathsf{P}_{x}(\{\tau_* < \infty\}) = 1$ by positive recurrence of $X$, we can also show that
\begin{align}
U(x,c)-P_0 > V(x,c) & = \mathsf{E}_{x}\bigl[e^{-\lambda \tau_*}\left(U(X_{\tau_*},c)-P_0\right)\bigr] \nonumber \\
& = \mathsf{E}_{x}\bigl[e^{-\lambda \tau_*}\left(U(\ell_*(c),c)-P_0\right)\bigr] \nonumber \\
& = \left(U(\ell_*(c),c)-P_0\right)\mathsf{E}_{x}\bigl[e^{-\lambda \tau_*}\bigr]  \nonumber \\
& = \left(U(\ell_*(c),c)-P_0\right)\frac{\phi_{\lambda}\left(x\right)}{\phi_{\lambda}\left(\ell_*(c)\right)}\label{eq:GlobalMinimumLemma-1}
\end{align}
where the last line follows from \eqref{hittingtimes}.
Since $x \geq \gamma_*(c)$ was arbitrary we have proved the necessity of the claim.
\vspace{+4pt}

$ii).$ \emph{Sufficiency}. Suppose now that there exists a point $\ell_*(c)\in(-\infty,x^0_1(c))$ such that \eqref{eq:NecessaryAndSufficientConditionSingleBoundary} holds for every $x \geq \gamma_*(c)$. Using the same arguments establishing the right-hand side of \eqref{eq:GlobalMinimumLemma-1}, noting that $\tau_*$ as defined in \eqref{eq:CandidateOptimalStoppingTime} is no longer necessarily optimal, for every $x \geq \gamma_*(c)$ we have
\begin{align*}
V(x,c) & \leq \mathsf{E}_{x}\bigl[e^{-\lambda \tau_*}\left(U(X_{\tau_*},c)-P_0\right)\bigr]\\
& = \left(U(\ell_*(c),c)-P_0\right)\frac{\phi_{\lambda}\left(x\right)}{\phi_{\lambda}\left(\ell_*(c)\right)} \\
& < U(x,c)-P_0
\end{align*}
which shows $\mathcal{D}_V^c \cap [\gamma_*(c),\infty)=\emptyset$.
\end{proof}

\begin{remark}\label{rem:red} Let us fix $c \in [0,1)$ such that $x^0_1(c) < \gamma_*(c) < x^0_2(c)$, or equivalently part $iii)$ of Proposition \ref{prop:struc-stop} holds. Writing
\begin{eqnarray}
\mathcal{F}(x)&:= & \frac{U(x,c)-P_0}{\phi_{\lambda}\left(x\right)}, \\
F(x)&:= &  \psi(x)/\phi(x),\\
H(y)&:= &  \mathcal{F}\circ F^{-1}(y) \enskip \text{for} \enskip y > 0,
\end{eqnarray}
we will appeal to the discussion given at the start of Section 6 of \cite{DaKa03}. Since $\mathcal{L}(x,c)+\lambda P_0>0$ for $x\in(-\infty,x^0_1(c))$ (from Proposition \ref{prop:discont}), it follows from equation (*) in Section 6 of \cite{DaKa03} that the function $y \mapsto H(y)$ is convex for $y\in(0,F(x^0_1(c)))$. Define $y_m^1$ and $y_m^2$ by 
\begin{equation}
\begin{split}
y_m^1 & := \arg \min \{H(y): y\in(0,F(x^0_1(c)))\} \\
y_m^2 & := \arg \min \{H(y): y\in(F(\gamma_*(c)),F(x^0_2(c)))\}
\end{split}
\end{equation}
Since the function $F$ is monotone increasing we have 
\begin{equation}
\begin{split}
m_1 & := \inf_{x \leq x^0_1(c)} \mathcal F(x)= \mathcal F(F^{-1}(y_m^1)) \\
m_2 & :=  \inf_{x \geq \gamma_*(c)} \mathcal F(x)= \mathcal F(F^{-1}(y_m^2))
\end{split}
\end{equation}

It is clear from Lemma \ref{Lemma:NecessaryAndSufficientConditionSingleBoundary} that when $m_1<m_2$ then part $iii)(a)$ of Proposition \ref{prop:struc-stop} holds, while when $m_1>m_2$ part $iii)(b)$ of Proposition \ref{prop:struc-stop} holds. (Of course, when $m_1=m_2$ then the values of $\mathcal F$ at the boundaries should also be examined to determine whether the condition of Lemma \ref{Lemma:NecessaryAndSufficientConditionSingleBoundary} holds).
\end{remark}

\subsection{The Optimal Boundaries}

We will characterise the four cases $i$, $ii$, $iii)(a)$, $iii)(b)$ of Proposition \ref{prop:struc-stop} through direct probabilistic analysis of the value function and subsequently derive equations for the optimal boundaries obtained in the previous section. We first address cases $i$ and $ii$ of Proposition \ref{prop:struc-stop}.

\begin{theorem}\label{thm:singlebd}
Let $c\in[0,1)$ and $\mathscr{B}$ be a subset of $\RR$. Consider the following problem: Find $x \in \mathscr{B}$ such that
\begin{align}\label{eq:smf00}
\big(U(x,c)-P_0\big)\frac{\phi'_\lambda(x)}{\phi_\lambda(x)}=U_x(x,c).
\end{align}
\begin{itemize}
\item[$~~i)$] If $\gamma_*(c) \ge x^0_2(c)$, let $\ell_*(c)$ be given as in Proposition \ref{prop:struc-stop}-$i)$, then $V(x,c)=V^{\ell_*}(x,c)$ (cf.~\eqref{Vl-repr}), $x\in\RR$ and $\ell_*(c)$ is the unique solution to \eqref{eq:smf00} in $\mathscr{B} = (-\infty,x^0_1(c))$.
\item[$~ii)$] If $\gamma_*(c) \le x^0_1(c)$, let $\ell_*(c)$ be given as in Proposition \ref{prop:struc-stop}-$ii)$, then $V(x,c)=V^{\ell_*}(x,c)$, $x\in\RR$ (cf.~\eqref{Vl-repr}) and $\ell_*(c)$ is the unique solution to \eqref{eq:smf00} in $\mathscr{B} =(-\infty,x^0_2(c))$.
\end{itemize}
\end{theorem}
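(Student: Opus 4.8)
The strategy is to lean on Proposition~\ref{prop:struc-stop}, which has already fixed the single-threshold geometry of $\DD^c_V$, and then to proceed in three steps: first identify $V$ with the explicit function $V^{\ell_*}$ of \eqref{Vl-repr}; second show that the threshold $\ell_*(c)$ solves the smooth-fit equation \eqref{eq:smf00}; third establish uniqueness of the solution of \eqref{eq:smf00} in $\mathscr{B}$ via a convexity transformation. I treat case $i)$ throughout, case $ii)$ being entirely analogous with $x^0_2(c)$ in place of $x^0_1(c)$. For the first step, fix $c\in[0,1)$: by Proposition~\ref{prop:struc-stop}-$i)$ the stopping time $\tau_*=\inf\{t\ge0:X^x_t\le\ell_*(c)\}$ is optimal in \eqref{dec01}, so $V(x,c)=\EE\big[e^{-\lambda\tau_*}(U(X^x_{\tau_*},c)-P_0)\big]$ for all $x\in\RR$. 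Since $X$ is positively recurrent, $\tau_*<\infty$ $\PP$-a.s.\ and $X^x_{\tau_*}=\ell_*(c)$ for $x\ge\ell_*(c)$ by path-continuity, so the strong Markov property and $\EE[e^{-\lambda\tau_*}]=\phi_\lambda(x)/\phi_\lambda(\ell_*(c))$ (cf.~\eqref{hittingtimes}) give the first branch of \eqref{Vl-repr}; for $x\le\ell_*(c)$ one has $\tau_*=0$ and both sides equal $U(x,c)-P_0$. Hence $V(\,\cdot\,,c)=V^{\ell_*}(\,\cdot\,,c)$ on $\RR$, exactly as in Lemma~\ref{lemma12}.

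For the smooth-fit step, note that for every $\ell\in\RR$ the hitting time $\tau_\ell:=\inf\{t\ge0:X^x_t\le\ell\}$ belongs to $\mathcal{T}$, whence $V(x,c)\le\EE\big[e^{-\lambda\tau_\ell}(U(X^x_{\tau_\ell},c)-P_0)\big]=V^\ell(x,c)$, which for $\ell<x$ equals $\phi_\lambda(x)\,\mathcal{F}(\ell)$ with $\mathcal{F}$ as in Remark~\ref{rem:red}. Fixing any $x>\ell_*(c)$ and combining with the first step gives $\mathcal{F}(\ell)\ge\mathcal{F}(\ell_*(c))$ for all $\ell$ in a neighbourhood of $\ell_*(c)$, so $\ell_*(c)$ is an \emph{interior} local minimiser of $\mathcal{F}$ (interior because $\ell_*(c)\in(-\infty,x^0_1(c))$). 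Since $U(\,\cdot\,,c)\in W^{2,\infty}_{loc}(\RR)\subset C^1(\RR)$ and $\phi_\lambda\in C^\infty$ is strictly positive, $\mathcal{F}\in C^1(\RR)$ and hence $\mathcal{F}'(\ell_*(c))=0$; as $\mathcal{F}'=(U_x\phi_\lambda-(U-P_0)\phi'_\lambda)/\phi_\lambda^2$, this is precisely \eqref{eq:smf00}. In particular \eqref{eq:smf00} admits a solution in $\mathscr{B}=(-\infty,x^0_1(c))$.

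For uniqueness, observe that \eqref{eq:smf00} holds at $x$ if and only if $\mathcal{F}'(x)=0$. With $F:=\psi_\lambda/\phi_\lambda$ (strictly increasing, mapping $\mathscr{B}$ onto $(0,F(x^0_1(c)))$) and $H:=\mathcal{F}\circ F^{-1}$ as in Remark~\ref{rem:red}, the computation referred to there (equation~(*) of Section~6 of \cite{DaKa03}) expresses $H''$ on $F(\mathscr{B})$ as a strictly positive multiple of $(\mathbb{L}_X-\lambda)(U-P_0)=\mathcal{L}(\,\cdot\,,c)+\lambda P_0$, which by Proposition~\ref{prop:discont} is strictly positive on $\mathscr{B}\setminus\{\gamma_*(c)\}$. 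Thus $H$ is strictly convex on each of the (at most two) open subintervals into which $F(\gamma_*(c))$ splits $F(\mathscr{B})$, and since $U(\,\cdot\,,c)\in C^1(\RR)$ forces $\mathcal{F}\in C^1(\RR)$, hence $H\in C^1$ across $F(\gamma_*(c))$, the derivative $H'$ is continuous and strictly increasing on all of $F(\mathscr{B})$; a strictly convex $C^1$ function has at most one critical point, so \eqref{eq:smf00} has at most one solution in $\mathscr{B}$, which together with the previous step is $\ell_*(c)$. The case $\gamma_*(c)\le x^0_1(c)$ is treated identically with $\mathscr{B}=(-\infty,x^0_2(c))$, using the sign of $\mathcal{L}(\,\cdot\,,c)+\lambda P_0$ from part $1.$ of Proposition~\ref{prop:discont}.

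I expect the uniqueness step to be the main obstacle: one must check that the convexity of the transformed value $H$ survives the positive jump of $\mathcal{L}(\,\cdot\,,c)+\lambda P_0$ at $\gamma_*(c)$. This works only because $U(\,\cdot\,,c)$, though merely $W^{2,\infty}_{loc}$, is genuinely $C^1$ at $\gamma_*(c)$ — the smooth fit of the underlying investment problem (Proposition~\ref{prop:backmat}-$ii)$) — so that $H'$ is continuous at $F(\gamma_*(c))$ and the one-sided strict convexities glue into global strict convexity. A secondary point requiring care is the boundary behaviour of $\psi_\lambda,\phi_\lambda$ ensuring that $F$ restricts to a diffeomorphism of $\mathscr{B}$ onto $(0,F(x^0_1(c)))$, resp.\ $(0,F(x^0_2(c)))$ in case $ii)$.
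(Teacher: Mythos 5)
Your proof is correct, but it departs from the paper's argument at two of the three steps, and in both cases the departure is a genuine alternative rather than a reformulation.

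For smooth fit, the paper works with the value function directly: it bounds difference quotients of $V(\,\cdot\,,c)$ at $\ell_*(c)$ from above using $V\le U-P_0$ and from below using optimality of the hitting time and the mean value theorem, passes to the limit by dominated convergence to get $V_x(\ell_*,c)=U_x(\ell_*,c)$, and only then invokes the concavity of $V(\,\cdot\,,c)$ from Lemma~\ref{lem:concave} to upgrade equality of one-sided derivatives to $C^1$-regularity. You instead show that $\ell_*(c)$ is a \emph{global} minimiser of $\mathcal{F}=(U-P_0)/\phi_\lambda$ on $\RR$, because for any $\ell$ and any $x>\ell\vee\ell_*(c)$ one has $\phi_\lambda(x)\mathcal F(\ell_*(c))=V(x,c)\le V^\ell(x,c)=\phi_\lambda(x)\mathcal F(\ell)$, and since $\mathcal F$ is $C^1$ (as $U(\,\cdot\,,c)\in W^{2,\infty}_{loc}\subset C^1$) the interior first-order condition $\mathcal F'(\ell_*(c))=0$ is exactly \eqref{eq:smf00}. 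This is shorter, avoids the MVT/limit manipulation, and does not need Lemma~\ref{lem:concave}. For uniqueness, the paper refers back to part~2 of the proof of Theorem~\ref{verificationOS-case2}, i.e.\ a Peskir-style It\^o--Tanaka comparison of $V^{\ell'}$ and $V^{\ell_*}$ that derives a sign contradiction from the strict positivity of $(\mathbb{L}_X-\lambda)(U-P_0)$ on the interval between competing boundaries. You instead transplant the Dayanik--Karatzas transformation already set up in Remark~\ref{rem:red}: $H=\mathcal F\circ F^{-1}$ is strictly convex on each piece of $F(\mathscr{B})\setminus\{F(\gamma_*(c))\}$ because $H''$ has the sign of $\mathcal{L}(\,\cdot\,,c)+\lambda P_0$, which is strictly positive there by Proposition~\ref{prop:discont}; continuity of $H'$ across $F(\gamma_*(c))$ (which you correctly trace to the smooth fit of the \emph{investment} problem, i.e.\ $U(\,\cdot\,,c)\in C^1$) then glues the two pieces into a globally strictly convex $C^1$ function with a unique critical point. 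Both routes are sound. The paper's probabilistic uniqueness argument is more uniform with the rest of the paper (it is the same machine used in Theorem~\ref{verificationOS-case2} and Theorem~\ref{thm:complete}) and does not rely on the $C^1$-gluing observation; your argument is lighter, directly exploits the structure already developed in Remark~\ref{rem:red}, and makes the role of $U$'s $C^1$-regularity at $\gamma_*(c)$ more transparent.

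One small remark on case $i)$: under $\gamma_*(c)\ge x^0_2(c)$, the point $\gamma_*(c)$ typically lies outside $\mathscr{B}=(-\infty,x^0_1(c))$, so the gluing across $F(\gamma_*(c))$ is only genuinely needed in case $ii)$ (and in the borderline configuration $x^0_2(c)\le\gamma_*(c)<x^0_1(c)$). Your phrasing ``on $\mathscr{B}\setminus\{\gamma_*(c)\}$'' handles this silently, but it would be worth a sentence to say that the splitting of $F(\mathscr{B})$ is trivial whenever $\gamma_*(c)\notin\mathscr{B}$.
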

\begin{proof}
We only provide details for the proof of $i)$ as the second part is completely analogous.

From Proposition \ref{prop:struc-stop}-$i)$ we know that $\ell_*(c)\in(-\infty,x^0_2(c))$ and that taking $\tau_*(x):=\inf\{t\ge 0\,:\,X^x_t\le\ell_*(c)\}$ is optimal for \eqref{dec01}, hence the value function $V$ is given by \eqref{Vl-repr} with $\ell=\ell_*$ (the proof is the same as that of Lemma \ref{lemma12}). If we can prove that smooth fit holds then $\ell_*$ must also be a solution to \eqref{eq:smf00}. To simplify notation set $\ell_*=\ell_*(c)$ and notice that
\begin{align}\label{prsm00}
\frac{V(\ell_*+\eps,c)-V(\ell_*,c)}{\eps}\le\frac{U(\ell_*+\eps,c)-U(\ell_*,c)}{\eps}\,,\qquad\eps>0.
\end{align}
On the other hand, consider $\tau_\eps:=\tau_*(\ell_*+\eps)=\inf\{t\ge0\,:\,X^{\ell_*+\eps}_t\le \ell_*\}$ and note that $\tau_\eps\to0$, $\PP$-a.s.~as $\eps\to0$ (which can be proved by standard arguments based on the law of iterated logarithm) 
and therefore $X^{\ell_*+\eps}_{\tau_\eps}\to\ell_*$, $\PP$-a.s.~as $\eps\to0$ by the continuity of $(t,x)\mapsto X^x_t(\omega)$ for $\omega\in\Omega$. Since $\tau_\eps$ is optimal in equation~\eqref{dec01} with $x = \ell_*+\eps$ we obtain
\begin{align}\label{prsm01}
\frac{V(\ell_*+\eps,c)-V(\ell_*,c)}{\eps}\ge\frac{\EE\Big[ e^{-\lambda\tau_\eps}\big(U(X^{\ell_*+\eps}_{\tau_\eps},c)-U(X^{\ell_*}_{\tau_\eps},c)\big)\Big]}{\eps}\,,\qquad\eps>0.
\end{align}
The mean value theorem, \eqref{OUexplicit} and \eqref{prsm01} give
\begin{align}\label{prsm02}
\frac{V(\ell_*+\eps,c)-V(\ell_*,c)}{\eps}& \ge \frac{\EE\Big[ e^{-\lambda\tau_\eps}U_x(\xi_\eps,c)\big(X^{\ell_*+\eps}_{\tau_\eps}-X^{\ell_*}_{\tau_\eps}\big)\Big]}{\eps}=\EE\Big[ e^{-(\lambda+\theta)\tau_\eps}U_x(\xi_\eps,c)\Big],
\end{align}
with $\xi_\eps\in[X^{\ell_*}_{\tau_\eps}, X^{\ell_*+\eps}_{\tau_\eps}]$, $\PP$-a.s. From \eqref{eq:Uanalyt2} one has that $U_x(\,\cdot\,,c)$ is bounded on $\RR$, hence taking limits as $\eps\to\infty$ in \eqref{prsm00} and \eqref{prsm02} and using the dominated convergence theorem in the latter we get
$V_x(\ell_*,c)=U_x(\ell_*,c)$, and since $V(\,\cdot\,,c)$ is concave (see Lemma \ref{lem:concave}) it must also be $C^1$ across $\ell_*$, i.e.~smooth fit holds.
In particular this means that differentiating \eqref{Vl-repr} at $\ell_*$ we observe that $\ell_*$ solves \eqref{eq:smf00}.
The uniqueness of this solution can be proved by the same arguments as those in part 2 of the proof of Theorem \ref{verificationOS-case2} and we omit them here for brevity.
\end{proof}

Next we address cases $iii)(a)$ and $iii)(b)$ of Proposition \ref{prop:struc-stop}. Let us define
\begin{align}\label{def:orrib}
F_1(\xi,\zeta):=\psi_{\lambda}(\xi)
\phi_{\lambda}(\zeta)\hs{-1pt}-\hs{-1pt}\psi_{\lambda}(\zeta)\phi_{\lambda}(\xi)\quad\text{and}\quad F_2(\xi,\zeta):=\psi_{\lambda}'(\xi)\phi_{\lambda}(\zeta)\hs{-2pt}-\hs{-2pt}\psi_{\lambda}(\zeta)\phi_{\lambda}'(\xi)
\end{align}
for $\xi,\zeta\in\RR$. 
\vspace{+8pt}

\begin{theorem}\label{thm:complete}
Let $c\in[0,1)$ be such that $x^0_1(c)<\gamma_*(c)<x^0_2(c)$ and consider the following problem: Find $x<y<z$ in $\RR$ with $x\in(-\infty,x^0_1(c))$ and $\gamma_*(c)<y<z<x^0_2(c)$ such that the triple $(x,y,z)$ solves the system
\begin{align}
\label{prob3i}&\quad(U(z,c)-P_0)
\frac{\phi'_{\lambda}(z)}{\phi_{\lambda}(z)}=U_x(z,c)\\[+5pt]
\label{prob3ii}&\quad(U(x,c)-P_0)\frac{F_2(x,y)}{F_1(x,y)}-(U(y,c)-P_0)
\frac{F_2(x,x)}
{F_1(x,y)}=U_x(x,c)\\[+5pt]
\label{prob3iii}&\quad(U(x,c)-P_0)\frac{F_2(y,y)}{F_1(x,y)}-(U(y,c)-P_0)\frac{F_2(y,x)}{F_1(x,y)}=U_x(y,c)
\end{align}
\begin{itemize}
\item[~i)] In case iii)(b) of Proposition \ref{prop:struc-stop} the stopping set is of the form $\DD_V^c=(-\infty,\ell_*^{(1)}(c)]\cup[\ell_*^{(2)}(c),\ell_*^{(3)}(c)]$,
and then $\{x,y,z\}=\{\ell_*^{(1)}(c),\ell_*^{(2)}(c),\ell_*^{(3)}(c)\}$ is the unique triple solving \eqref{prob3i}--\eqref{prob3iii}. The value function is given by
\begin{align}\label{eq:Vfuncomp}
\hs{-6pt}V(x,c)\hs{-2pt}=\hs{-2pt}
\left\{
\begin{array}{ll}
\hs{-4pt}\big(U(\ell^{(3)}_*,c)\hs{-2pt}-\hs{-2pt}P_0\big)\frac{\phi_\lambda(x)}{\phi_\lambda(\ell^{(3)}_*)} & \text{for $x\hs{-2pt}>\hs{-2pt} \ell^{(3)}_*$}\\[+10pt]
\hs{-4pt}U(x,c)-P_0 & \text{for $\ell^{(2)}_*\hs{-2pt}\le\hs{-2pt} x\hs{-2pt} \le\hs{-2pt} \ell^{(3)}_*$}\\[+10pt]
\hs{-4pt}(U(\ell^{(1)}_*,c)\hs{-2pt}-\hs{-2pt}P_0)\frac{F_1(x,\ell^{(2)}_*)}{F_1(\ell^{(1)}_*,\ell^{(2)}_*)}\hs{-2pt}+\hs{-2pt}(U(\ell^{(2)}_*,c)\hs{-2pt}
-\hs{-2pt}P_0)\frac{F_1(\ell^{(1)}_*,x)}
{F_1(\ell^{(1)}_*,\ell^{(2)}_*)} & \text{for $\ell^{(1)}_*\hs{-2pt}<\hs{-2pt} x\hs{-2pt} <\hs{-2pt} \ell^{(2)}_*$}\\[+10pt]
\hs{-4pt}U(x,c)-P_0 & \text{for $ x \hs{-2pt}\le\hs{-2pt} \ell^{(1)}_*$}
\end{array}
\right.
\end{align}
where we have set $\ell_*^{(k)}=\ell_*^{(k)}(c)$, $k=1,2,3$ for simplicity.
\item[ii)] In case iii)(a) of Proposition \ref{prop:struc-stop} we have $\DD_V^c=(-\infty,\ell_*^{(1)}(c)]$, 
moreover $V(x,c)=V^{\ell_*^{(1)}}(x,c)$, $x\in\RR$ (cf.~\eqref{Vl-repr}) and $\ell_*^{(1)}(c)$ is the unique solution to \eqref{eq:smf00} with $\mathscr{B}=(-\infty,x^0_1(c))$.
\end{itemize}
\end{theorem}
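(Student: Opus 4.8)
The plan is to build on Proposition~\ref{prop:struc-stop}-$iii)$, which already identifies the shape of the stopping set $\DD_V^c$ and the corresponding optimal stopping time in each of the sub-cases $iii)(a)$ and $iii)(b)$, and to turn the (continuous and) smooth fit at the boundary points of $\DD_V^c$ into the algebraic system \eqref{prob3i}--\eqref{prob3iii}. Part~$ii)$ will be dealt with exactly as Theorem~\ref{thm:singlebd}-$i)$, taking $\mathscr{B}=(-\infty,x^0_1(c))$: in case $iii)(a)$ the hitting time $\tau_*=\inf\{t\ge0\,:\,X^x_t\le\ell_*^{(1)}(c)\}$ is optimal by Proposition~\ref{prop:struc-stop}-$iii)(a)$, so $V(\,\cdot\,,c)=V^{\ell_*^{(1)}}(\,\cdot\,,c)$ by the computation of Lemma~\ref{lemma12}; smooth fit at $\ell_*^{(1)}(c)<x^0_1(c)$ will follow from the concavity of $V(\,\cdot\,,c)$ (Lemma~\ref{lem:concave}) combined with the one-sided estimates used in Theorem~\ref{thm:singlebd} (the incremental-ratio bounds, the mean value theorem, \eqref{OUexplicit}, boundedness of $U_x(\,\cdot\,,c)$ from \eqref{eq:Uanalyt2}, and dominated convergence); differentiating \eqref{Vl-repr} then yields \eqref{eq:smf00}; and uniqueness of the solution in $(-\infty,x^0_1(c))$ will be obtained by the argument of part~2 of the proof of Theorem~\ref{verificationOS-case2}. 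Thus I shall concentrate on part~$i)$ and, for the bulk of the argument, on the non-degenerate case $\ell_*^{(2)}(c)<\ell_*^{(3)}(c)$.

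First I would derive the explicit formula \eqref{eq:Vfuncomp}. By Proposition~\ref{prop:struc-stop}-$iii)(b)$ the stopping time $\tau^{(II)}_*$ of \eqref{eq:ell*2} is optimal in \eqref{dec01}, so $V(x,c)=\EE\big[e^{-\lambda\tau^{(II)}_*}\big(U(X^x_{\tau^{(II)}_*},c)-P_0\big)\big]$; writing $\ell_*^{(k)}=\ell_*^{(k)}(c)$, on $\DD_V^c=(-\infty,\ell_*^{(1)}]\cup[\ell_*^{(2)},\ell_*^{(3)}]$ one has $\tau^{(II)}_*=0$ and $V=U-P_0$, for $x>\ell_*^{(3)}$ the path first reaches $\ell_*^{(3)}$, and for $\ell_*^{(1)}<x<\ell_*^{(2)}$ it leaves $(\ell_*^{(1)},\ell_*^{(2)})$ through one of its endpoints; substituting the standard one- and two-sided exit transforms of $X$ expressed through $\phi_\lambda,\psi_\lambda$ (cf.~\eqref{hittingtimes} and Appendix~\ref{factsOU}) and simplifying with $F_1$ of \eqref{def:orrib} gives exactly \eqref{eq:Vfuncomp} (equivalently: on each connected component of $\CC_V^c$ the function $V(\,\cdot\,,c)$ is $\lambda$-harmonic, hence $C^2$ and a linear combination of $\phi_\lambda,\psi_\lambda$ there — the kink of $U-P_0$ at $\gamma_*(c)$, which lies inside $(\ell_*^{(1)},\ell_*^{(2)})$ because $\gamma_*(c)<\ell_*^{(2)}$, is immaterial on the interior of the continuation region — and \eqref{eq:Vfuncomp} follows by imposing $V=U-P_0$ at the three boundary points and the linear growth bound on $V$, which rules out the $\psi_\lambda$-component on $(\ell_*^{(3)},\infty)$). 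Next I would establish $V(\,\cdot\,,c)\in C^1(\RR)$: at each of $\ell_*^{(1)},\ell_*^{(2)},\ell_*^{(3)}$ smooth fit is proved as in Theorem~\ref{thm:singlebd}, using $V\le U-P_0$ with equality on $\DD_V^c$, the optimality of the relevant hitting-time strategy (via the mean value theorem, \eqref{OUexplicit}, boundedness of $U_x(\,\cdot\,,c)$ and dominated convergence), and the concavity of $V(\,\cdot\,,c)$; here one uses $\gamma_*(c)<\ell_*^{(2)}(c)$ (valid because $\gamma_*(c)\in\CC_V^c$ by Proposition~\ref{prop:struc-stop}-$iii)$), so that $U(\,\cdot\,,c)$ is smooth near each boundary point. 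Differentiating \eqref{eq:Vfuncomp} at $\ell_*^{(3)}$, at $\ell_*^{(1)}$ and at $\ell_*^{(2)}$, using $\partial_\xi F_1(\xi,\zeta)=F_2(\xi,\zeta)$ and the corresponding identity for $\partial_\zeta F_1$ together with the just-established $V_x=U_x$ at these points, identifies the smooth-fit relations with \eqref{prob3i}, \eqref{prob3ii} and \eqref{prob3iii} respectively, for $(x,y,z)=(\ell_*^{(1)},\ell_*^{(2)},\ell_*^{(3)})$; since $\ell_*^{(1)}<x^0_1(c)$ and $\gamma_*(c)<\ell_*^{(2)}\le\ell_*^{(3)}\le x^0_2(c)$, this is a solution of the system in the prescribed region, which proves existence.

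For uniqueness, let $(x',y',z')$ be any solution of \eqref{prob3i}--\eqref{prob3iii} in the prescribed region and let $V'$ be obtained from the right-hand side of \eqref{eq:Vfuncomp} by replacing $(\ell_*^{(1)},\ell_*^{(2)},\ell_*^{(3)})$ with $(x',y',z')$. By construction $V'\in C^1(\RR)$ with $V'_{xx}\in L^\infty_{loc}(\RR)$, $(\mathbb{L}_X-\lambda)V'=0$ on $(x',y')\cup(z',\infty)$, $V'=U-P_0$ on $(-\infty,x']\cup[y',z']$, and $V'\ge V$ on $\RR$ because $V'$ is the payoff in \eqref{dec01} of the (in general sub-optimal) hitting-time strategy of $(-\infty,x']\cup[y',z']$. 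Arguing by contradiction, if $(x',y',z')\neq(\ell_*^{(1)},\ell_*^{(2)},\ell_*^{(3)})$ then the two stopping sets differ; following part~2 of the proof of Theorem~\ref{verificationOS-case2} one picks a starting point $\bar x$ in their symmetric difference and a hitting time $\sigma$ keeping $X^{\bar x}$ inside the relevant interval until absorption, applies It\^o-Tanaka's formula to $V'$ and to $V$ on $[0,\sigma]$ (after localisation and passage to the limit using $|U(x,c)|\le C(1+|x|)$ and the uniform integrability of Lemma~\ref{lem:uiX}), subtracts, and uses $V'\ge V$ at absorption together with $V'=V=U-P_0$ at $\bar x$ to arrive at an inequality of the form
\begin{align*}
\pm\,\EE\Big[\int_0^{\sigma}e^{-\lambda t}\big(\mathbb{L}_X-\lambda\big)(U-P_0)(X^{\bar x}_t,c)\,\mathds{1}_{\{X^{\bar x}_t\in A\}}\,dt\Big]\le 0,
\end{align*}
with $A$ contained in the symmetric difference of the two stopping sets; since $\ell_*^{(1)},x'\le x^0_1(c)$ and $\ell_*^{(2)},\ell_*^{(3)},y',z'\in(\gamma_*(c),x^0_2(c)]$, we get $A\subseteq(-\infty,x^0_1(c)]\cup(\gamma_*(c),x^0_2(c)]$, on which $(\mathbb{L}_X-\lambda)(U-P_0)=\mathcal{L}+\lambda P_0>0$ a.e.\ by Proposition~\ref{prop:discont}. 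As $\sigma>0$ $\PP$-a.s.\ (path-continuity) and $\PP(X^{\bar x}_t\in A)>0$ for every $t>0$ (absolute continuity of the law of $X$), this contradicts the displayed inequality; hence the triple is unique and \eqref{eq:Vfuncomp} gives $V$.

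I expect the uniqueness step to be the main obstacle: unlike in Theorem~\ref{verificationOS-case2} the two candidate stopping sets each have two connected components, so one must enumerate the possible configurations — the relative positions of $x'$ and $\ell_*^{(1)}$, and of $[y',z']$ and $[\ell_*^{(2)},\ell_*^{(3)}]$ — and, in each, choose $\bar x$ and $\sigma$ so that the comparison isolates a sub-interval of $(-\infty,x^0_1(c)]\cup(\gamma_*(c),x^0_2(c)]$ on which $(\mathbb{L}_X-\lambda)(U-P_0)$ is strictly positive. Finally, the degenerate subcase $\ell_*^{(2)}(c)=\ell_*^{(3)}(c)$, in which $V$ merely touches $U-P_0$ tangentially at the single point $\ell_*^{(2)}(c)\in(\gamma_*(c),x^0_2(c))$, is handled by reading \eqref{eq:Vfuncomp} and \eqref{prob3i}--\eqref{prob3iii} with $y=z$ (the middle continuation component collapsing and the three scalar fit conditions reducing accordingly at the double point), and the arguments above then carry over with the obvious simplifications.
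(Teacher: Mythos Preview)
Your proposal is correct and follows essentially the same approach as the paper: both derive \eqref{eq:Vfuncomp} from the optimality of $\tau^{(II)}_*$ given by Proposition~\ref{prop:struc-stop}-$iii)(b)$, invoke the probabilistic smooth-fit argument of Theorem~\ref{thm:singlebd} at each of the three boundaries to obtain the system \eqref{prob3i}--\eqref{prob3iii}, and prove uniqueness by introducing a competing triple, forming the associated sub-optimal value $V'\ge V$, and reaching a contradiction via It\^o--Tanaka applied on suitable sub-intervals where $(\mathbb{L}_X-\lambda)(U-P_0)>0$ by Proposition~\ref{prop:discont}. The paper organises the uniqueness into three explicit steps (first showing the upper intervals overlap, then pinning down $\ell'_3=\ell_*^{(3)}$, then $\ell'_2=\ell_*^{(2)}$ and $\ell'_1=\ell_*^{(1)}$), which is precisely the ``enumeration of configurations'' you anticipate as the main obstacle.
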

\begin{proof}
\emph{Proof of} $i).$ \enskip In the case of Proposition~\ref{prop:struc-stop}-$iii)(b)$, the stopping $\tau^{(II)}_*$ defined in \eqref{eq:ell*2} is optimal for \eqref{dec01}:
	\[
	V(x,c) = \EE\Big[e^{-\lambda\tau^{(II)}}\big(U(X^x_{\tau^{(II)}},c)-P_0\big)\Big]
	\]
Equation~\eqref{eq:Vfuncomp} is therefore just the analytical representation for the value function in this case. The fact that $\ell^{(1)}_*$, $\ell^{(2)}_*$ and $\ell^{(3)}_*$ solve the system \eqref{prob3i}--\eqref{prob3iii} follows from the smooth fit condition at each of the boundaries. A proof of the smooth fit condition can be carried out using probabilistic techniques as done previously for Theorem~\ref{thm:singlebd}. We therefore omit its proof and only show uniqueness of the solution to \eqref{prob3i}--\eqref{prob3iii}.

Uniqueness will be addressed with techniques similar to those employed in Theorem \ref{verificationOS-case2}, taking into account that the stopping region in the present setting is disconnected. We fix $c\in[0,1)$, assume that there exists a triple $\{\ell'_1,\ell'_2,\ell'_3\} \neq \{\ell^{(1)}_*,\ell^{(2)}_*,\ell^{(3)}_*\}$ solving \eqref{prob3i}--\eqref{prob3iii} and define a stopping time
\begin{align}\label{uniq00}
\sigma^{(II)}:=\inf\big\{t\ge0\,:\,X^x\le\ell'_1\:\:\text{or}\:\:X^x_t\in[\ell'_2,\ell'_3]\big\}\quad x\in\RR.
\end{align}
We can associate to the triple a function
\begin{align}\label{uniq01bis}
V'(x,c):=\EE\Big[e^{-\lambda\sigma^{(II)}}\big(U(X^x_{\sigma^{(II)}},c)-P_0\big)\Big]\qquad x\in\RR
\end{align}
and note that $V'(\,\cdot\,,c)$ has the same properties as the value function $V(\,\cdot\,,c)$ provided that we replace $\ell_*^{(k)}$ by $\ell'_k$ everywhere for $k=1,2,3$. Moreover, equation~\eqref{dec01} implies
\begin{equation}\label{eq:Proof-Value-Function-Triple}
V'(x,c)\ge V(x,c),\qquad x \in\RR.
\end{equation}

Step $1.$ \enskip First we show that $(\ell^{(2)}_*,\ell^{(3)}_*)\cap(\ell'_2,\ell'_3)\neq \emptyset$. We assume that $\ell'_2\ge\ell^{(3)}_*$ but the same arguments would apply if we consider $\ell^{(2)}_*\ge\ell'_3$. Note that $\ell'_1<\ell^{(3)}_*$ since $\ell'_1\in(-\infty,x^0_1(c))$, then fix $x\in(\ell'_2,\ell'_3)$ and define the stopping time $\tau_3=\inf\{t\ge 0\,:\,X^x_t\le\ell_*^{(3)}\}$. 
We have $V(x,c)<U(x,c)-P_0$ and by \eqref{uniq01bis} it follows that $V'(x,c)=U(x,c)-P_0$. Then an application of the It\^o-Tanaka formula gives
\begin{align}
0<V'(x,c)-V(x,c) = {} & \EE \Big[e^{-\lambda\tau_3}\big(V'(X^x_{\tau_3},c)-V(X^x_{\tau_3},c)\big)\Big]\\[+3pt]
&-\EE\bigg[\int_0^{\tau_3}{\hs{-4pt}e^{-\lambda t}\big(\mathbb{L}_X-\lambda\big)\big(U(X^x_t,c)-P_0\big)\mathds{1}_{\{X^x_t\in(\ell'_2,\ell'_3)\}}dt}\bigg]\nonumber\\[+3pt]
< {} & \EE\Big[e^{-\lambda\tau_3}\big(V'(\ell_*^{(3)},c)-U(\ell_*^{(3)},c)+P_0\big)\Big]\le0\nonumber
\end{align}
where we have used Proposition \ref{prop:discont}-$(3)$ in the first inequality on the right-hand side and the fact that $V'(\ell_*^{(3)},c)\le U(\ell_*^{(3)},c)-P_0 $ 
in the second. We then reach a contradiction with \eqref{eq:Proof-Value-Function-Triple} and $(\ell^{(2)}_*,\ell^{(3)}_*)\cap(\ell'_2,\ell'_3)\neq \emptyset$.
\vs{+4pt}

Step $2.$ \enskip Notice now that if we assume $\ell'_3<\ell^{(3)}_*$ we also reach a contradiction with \eqref{eq:Proof-Value-Function-Triple} as for any $x\in(\ell'_3,\ell^{(3)})_*)$ we would have $V'(x,c)<U(x,c)-P_0=V(x,c)$. Then we must have $\ell'_3\ge \ell^{(3)}_*$.

Assume now that $\ell'_3>\ell_*^{(3)}$, take $x\in(\ell_*^{(3)},\ell'_3)$ and $\tau_3$ as in Step $1.$ above. Note that $V'(x,c)=U(x,c)-P_0>V(x,c)$ whereas $V(\ell_*^{(3)},c)=U(\ell_*^{(3)},c)-P_0=V'(\ell_*^{(3)},c)$ by Step $1.$ above and \eqref{uniq01bis}. Then using the It\^o-Tanaka formula again we find
\begin{align}
0<V'(x,c)-V(x,c) = {} & \EE\Big[e^{-\lambda\tau_3}\big(V'(X^x_{\tau_3},c)-V(X^x_{\tau_3},c)\big)\Big]\\[+3pt]
&-\EE\bigg[\int_0^{\tau_3}{\hs{-4pt}e^{-\lambda t}\big(\mathbb{L}_X-\lambda\big)\big(U(X^x_t,c)-P_0\big)\mathds{1}_{\{X^x_t\in(\ell^{(3)}_*,\ell'_3)\}}dt}\bigg]\nonumber\\[+3pt]
< {} & \EE\Big[ e^{-\lambda\tau_3}\big(V'(\ell_*^{(3)},c)-U(\ell_*^{(3)},c)+P_0\big)\big]=0\nonumber
\end{align}
hence \vs{+4pt}there is a contradiction with \eqref{eq:Proof-Value-Function-Triple} and $\ell_*^{(3)}=\ell'_3$.
\vs{+4pt}

Step $3.$ \enskip If we now assume that $\ell_*^{(2)}<\ell'_2$ we find the same contradiction with \eqref{eq:Proof-Value-Function-Triple} as in Step $2.$ as in fact for any $x\in(\ell^{(2)}_*,\ell'_2)$ we would have $V'(x,c)<U(x,c)-P_0=V(x,c)$. Similarly if we assume that $\ell'_1<\ell_*^{(1)}$ then for any $x\in(\ell'_1,\ell^{(1)}_*)$ we would have $V'(x,c)<U(x,c)-P_0=V(x,c)$. These contradictions imply that it must be $\ell'_2\le \ell^{(2)}_*$ and $\ell'_1\ge \ell^{(1)}_*$.

Let us assume now that $\ell'_2<\ell^{(2)}_*$, then taking $x\in(\ell'_2,\ell^{(2)}_*)$, applying the It\^o-Tanaka formula until the first exit time from the open set $(\ell^{(1)}_*,\ell^{(2)}_*)$ and using arguments similar to those in Steps $1.$ and $2.$ we end up with a contradiction. Hence $\ell'_2=\ell^{(2)}_*$; analogous arguments can be applied to establish that $\ell'_1=\ell_*^{(1)}$.

\vs{+4pt}
\emph{Proof of} $ii).$ \enskip To prove $ii)$ we simply argue as in Theorem \ref{thm:singlebd}, concluding that $V(x,c)=V^{\ell_*^{(1)}}(x,c)$, $x\in\RR$ and $\ell_*^{(1)}$ solves \eqref{eq:smf00} with $\mathscr{B}=(-\infty,x^0_1(c))$.
\end{proof}


\section{Conclusion}\label{Sec:Conclusion}
In this paper we have studied the problem of optimal entry into an irreversible investment problem with a cost functional which is non convex with respect to the control variable. The
non convexity of the expected cost criterion is due to the real-valued nature of the spot price of electricity. We show that the problem can be decoupled and that the investment phase can be studied independently of the ``entry'' decision as an investment problem over an infinite time horizon. Instead the optimal entry decision depends heavily on the properties of the optimal investment policy. 

The complete value function can be rewritten as the one of an optimal stopping problem where the cost of immediate stopping involves the value function of the infinite horizon investment problem. It has been shown in \cite{DeAFeMo14} that the latter problem presents a complex structure of the solution, in which the optimal investment rule can be either singularly continuous or purely discontinuous, depending on the problem parameters. Such features, together with the non explicit representation of the investment problem's value function, in turn imply a non standard optimal entry policy. 
Indeed, the optimal entry rule can be either the first hitting time of the spot price at a single threshold, or can be triggered by multiple boundaries splitting the state space into non connected stopping and continuation regions. The techniques employed in the paper are those of stochastic calculus and optimal stopping theory, and a fine numerical study supports the theoretical findings of our work.
\appendix
\section{Some Proofs from Section \ref{sec:chatneg}}
\label{someproofs}
\renewcommand{\theequation}{A-\arabic{equation}}

\emph{Proof of Proposition \ref{prop-sign-LU}}\vspace{0.15cm}\\

Fix $\bar{c}\in[0,1]$ and set $\mathcal{L}(x,\bar{c}):=\big(\mathbb{L}_X-\lambda\big)U(x,\bar{c})$ for simplicity. By \eqref{eq:U&u}, \eqref{eq:LXU} and recalling that $u(x;\bar{c})=0$ for all $x\in\RR$ such that $\bar{c}\le g_*(x)$ (or equivalently $x\le\beta_*(\bar{c})$) we get
\begin{align}\label{gen-U}
\mathcal{L}(x,\bar{c})=\left\{
\begin{array}{ll}
-\lambda\,x\,\Phi(\bar{c}) & \text{for $x>\beta_*(\bar{c})$}\\[+3pt]
\big[\theta\mu-(\lambda+\theta)\,x\big]\big(g_*(x)-\bar{c}\big) -\lambda\,x\,\Phi(g_*(x)) &  \text{for $x\le \beta_*(\bar{c})$.}
\end{array}
\right.
\end{align}
Since $g_*$ is continuous with $g_*(\beta_*(\bar{c}))=\bar{c}$ one can verify that $x\mapsto\mathcal{L}(x,\bar{c})$ is continuous, $\lim_{x\to+\infty}\mathcal{L}(x,\bar{c})=-\infty$ and, by recalling also that $g_*(x)=1$ for $x\le\beta_*(1)$, $\lim_{x\to-\infty}\mathcal{L}(x,\bar{c})=+\infty$. Since $\beta_*\in C^1([0,1])$ and it is strictly monotone then $g_*$ is differentiable for a.e.~$x\in\mathbb{R}$ with $g'_*\le0$. In particular  $\tfrac{d }{d\,x}\mathcal{L}(x,\bar{c})$ exists everywhere with the exception of points $x=\beta_*(\bar{c})$ and $x=\beta_*(1)$. It follows that
\begin{align}
\label{prop-Lx01}\tfrac{d}{d\,x}\mathcal{L}(x,\bar{c})& = -\lambda\Phi(\bar{c})<0 &\text{for $x>\beta_*(\bar{c})$}\\[+4pt]
\label{prop-Lx03}\tfrac{d}{d\,x}\mathcal{L}(x,\bar{c})& = - (\lambda+\theta)\big(1-\bar{c}\big)<0 & \text{for $x<\beta_*(1)$}
\end{align}
where in the second expression we have used that $\Phi(1)=0$ by Assumption \ref{ass:phi}. Now we recall that $\beta_*(c)\le \hat{x}_0(c)$ for $c\in[0,1]$ (cf.~Proposition \ref{prop:backmat}-$i)$) and $\theta\mu-k(c)\,\beta_*(c)>0$ on $[0,1]$. Hence in particular for $c=g_*(x)$, $x\in[\beta_*(1),\beta_*(0)]$, we get $\theta\mu-k(g_*(x))\,x>0$ and
\begin{align}
\label{prop-Lx02}\tfrac{d}{d\,x}\mathcal{L}(x,\bar{c})& = \big[\theta\mu-k(g_*(x))\,x\big]g'_*(x) \\[+4pt]
&- (\lambda+\theta)\big(g_*(x)-\bar{c}\big) -\lambda\,\Phi(g_*(x))<0 & \text{for $x\in\big(\beta_*(1), \beta_*(\bar{c})\big)$.}\nonumber
\end{align}
We then obtain that $x\mapsto\mathcal{L}(x,\bar{c})$ is continuous, strictly decreasing (by \eqref{prop-Lx01}, \eqref{prop-Lx03}) and \eqref{prop-Lx02} and it equals zero at a single point for any given $\bar{c}\in[0,1]$. Obviously the result extends to $\mathcal{L}(x,\bar{c})+\lambda P_0=\big(\mathbb{L}_X-\lambda\big)\big(U(x,c)-P_0)$ and \eqref{signLU} follows.\vspace{0.2cm}\\

\noindent \emph{Proof of Proposition \ref{prop:smfit01}} \vspace{0.15cm}\\

Existence, uniqueness and smoothness of $\alpha^*$ follow from arguments analogous to those employed to prove \cite[Thm.~2.1]{DeAFeMo14}. For the limiting behaviour of $\alpha^*(c)$ as $c\to 1$ we observe that $\Phi(c)\downarrow 0$ and $x^\dagger_0(c)\uparrow\infty$ when $c\uparrow 1$. Since $\alpha^*$ is strictly increasing it has left-limit so we argue by contradiction and assume that $\alpha^*(c)\to\alpha_0$ as $c\to1$ for some $\alpha_0<+\infty$. Then in the limit as $c\to 1$ \eqref{smfit03} gives
\[
0=(\hat{G}(\alpha_0,1)-P_0)\phi'_\lambda(\alpha_0)/
\phi_\lambda(\alpha_0)=-P_0\phi'_\lambda(\alpha_0)/\phi_\lambda(\alpha_0)\neq 0\]
and we reach a contradiction. 
\vspace{0.2cm}\\

\noindent \emph{Proof of Proposition \ref{prop:upbdda}}\vspace{0.15cm}\\

Fix $c\in[0,1]$. It is clear that $\alpha^*(c)$ solves \eqref{smfit03} if and only if $K(\alpha^*(c),c)=0$ where
\begin{align}
K(x,c):=-\tfrac{\lambda}{\lambda+\theta}\Phi(c)\phi_\lambda(x)+\big(\hat{G}(x,c)-P_0\big)\phi'_\lambda(x)\qquad x\in\RR.
\end{align}
From direct computation it is not hard to verify that $x\mapsto K(x,c)$ is strictly decreasing and convex on $(-\infty,x^\dagger_0(c))$, so that it is sufficient to show that $K(z_0(c),c)<0$ for $z_0(c):=P_0/\Phi(c)$ to conclude the proof. In fact we shall only consider the case $z_0(c)\in (-\infty,x^\dagger_0(c))$ as otherwise the result is trivial.

Set for simplicity $z_0=z_0(c)$, then from straightforward algebra we find
\begin{align}\label{eq:Kz}
K(z_0,c)=\phi_\lambda(z_0(c))\Phi(c)\Big[-\tfrac{\lambda}{\lambda+\theta}+(1-\tfrac{\lambda}{\lambda+\theta})(\mu-z_0)\tfrac{\phi'_\lambda(z_0)}
{\phi_\lambda(z_0)}\Big].
\end{align}
Now, since $\phi_\lambda''>0$ and $\LL_X\phi_\lambda=\lambda\phi_\lambda$ on $\RR$ one has $(\mu-z_0)\tfrac{\phi'_\lambda(z_0)}{\phi_\lambda(z_0)}<\tfrac{\lambda}{\theta}$
hence from \eqref{eq:Kz} it follows $K(z_0,c)<0$.
\vspace{0.2cm}\\

\noindent \emph{Proof of Lemma \ref{lemma12}}\vspace{0.15cm}\\

We recall that the Ornstein-Uhlenbeck process is positively recurrent (cf.~Appendix \ref{factsOU}), hence $\tau_\ell(x,c)<+\infty$ $\PP$-a.s.~for any $x\in\RR$ and it follows that $U(X^x_{\tau_\ell},c)=U(\ell(c),c)$ $\PP$-a.s. The latter and \eqref{def-Vl} then imply
\begin{align}
V^\ell(x,c)=\big(U(\ell(c),c)-P_0\big)\EE\Big[e^{-\lambda\tau_\ell(x,c)}\Big]=\big(U(\ell(c),c)-P_0\big)\frac{\phi_\lambda(x)}{\phi_\lambda(\ell(c))}
\end{align}
for $x>\ell(c)$, where \eqref{hittingtimes} has been used.
\vspace{0.2cm}\\

\noindent \emph{Proof of Proposition \ref{prop-smfitV}}\vspace{0.15cm}\\

Fix $c\in[0,c_*)$. Since we are looking for a finite-valued boundary $\ell_*$, solving \eqref{smfit04} is equivalent to finding $x$ such that $\hat{H}(x,c)=0$ where
\begin{align}\label{def-H02}
\hat{H}(x,c):=(U(x,c)-P_0)\phi'_{\lambda}(x)-U_x(x,c)\phi_{\lambda}(x).
\end{align}
We recall \eqref{eq:U&u}, \eqref{eq:u01}, \eqref{eq:u02} and that the function $g_*$ is the inverse of $\beta_*$ (cf.~Proposition \ref{prop:backmat}). As in \eqref{eq:LXU} we can derive $U$ with respect to $x$ and take the derivative inside the integral so to obtain
\begin{align}\label{U&Ux}
U(x,c)=x(1-c)-\int^1_{c\vee g_*(x)}{\hspace{-8pt}u(x,y)dy}\quad\text{and}\quad
U_x(x,c)=(1-c)-\int^1_{c\vee g_*(x)}{\hspace{-8pt}u_x(x,y)dy}
\end{align}
for all $x\in\mathbb{R}$ and where we have used that $u$ and $u_x$ equal zero for $x\in\mathbb{R}$ such that $c\le g_*(x)$.

In order to study the asymptotic behaviour of \eqref{def-H02} as $x\to-\infty$ let us observe that for $x<b_*(1)$ one has $g_*(x)= 1$ and hence $U(x,c)=x(1-c)$ and $U_x(x,c)=1-c$. Also from the expression of $\phi_\lambda$ (cf.~Appendix \ref{factsOU}) one gets
\begin{align*}
\lim_{x \rightarrow -\infty}\phi_{\lambda}(x)= + \infty,\quad \lim_{x \rightarrow -\infty}x\phi^{'}_{\lambda}(x) = + \infty,\quad \lim_{x \rightarrow -\infty}\phi^{'}_{\lambda}(x)= - \infty
\end{align*}
and $\lim_{x \rightarrow -\infty}x\phi^{''}_{\lambda}(x) = - \infty$. Then, by applying De l'Hopital rule twice we have
\begin{align}
0\le \lim_{x \rightarrow -\infty} \frac{\phi_{\lambda}(x)}{x\phi^{'}_{\lambda}(x)}  =  \lim_{x \rightarrow -\infty} \frac{1}{1 + x\phi^{''}_{\lambda}(x)/\phi^{'}_{\lambda}(x)}  =  \lim_{x \rightarrow -\infty} \frac{1}{2 + x\phi^{'''}_{\lambda}(x)/\phi^{''}_{\lambda}(x)} \leq \frac{1}{2},
\end{align}
since $x\phi^{'''}_{\lambda}(x)/\phi^{''}_{\lambda}(x) > 0$ for $x < 0$. Therefore
$\lim_{x \rightarrow -\infty} \bigl[1 - \frac{\phi_{\lambda}(x)}{x\phi^{'}_{\lambda}(x)}\bigr] = a \in [\frac{1}{2}, 1]$ and we conclude that
\begin{align*}
\lim_{x\to-\infty}\hat{H}(x,c)& = \lim_{x\to-\infty}\left[\left(x(1-c)-P_0\right)\phi'_{\lambda}(x)-(1-c)
\phi_{\lambda}(x)\right] \\
\ge &(1-c)\lim_{x\to-\infty}x\phi'_{\lambda}(x)\left(1-\frac{\phi_{\lambda}(x)}{x\phi'_{\lambda}(x)}\right)=+\infty.
\end{align*}

Next we aim at showing that $\hat{H}(x^0(c),c)<0$ so that by continuity of $x\mapsto \hat{H}(x,c)$ we obtain existence of a solution of \eqref{smfit04}. We denote $\Sigma(c):=-\int^1_{c}{\tfrac{G(\beta_*(y),y)}{\phi_{\lambda}(\beta_*(y))}dy}>0$ where positivity holds by observing that $G(x,c)<0$ for $x<x_0(c)$ and hence for
$x=\beta_*(c)$. Then by using \eqref{def:u-analyt} and \eqref{def:G} in \eqref{U&Ux}, and evaluating the other integrals we obtain
\begin{align}
\label{U&Ux02}U(x,c)& = x\big(g_*(x)\vee c-c\big)+\tfrac{\lambda}{\lambda+\theta}\Phi\big(g_*(x)\vee c\big)\big(x+\tfrac{\mu\theta}{\lambda}\big)-\phi_{\lambda}(x)\Sigma\big(c\vee g_*(x)\big)\\[+4pt]
\label{U&Ux03}U_x(x,c)& = \big(g_*(x)\vee c-c\big)+\tfrac{\lambda}{\lambda+\theta}\Phi\big(g_*(x)\vee c\big)-\phi'_{\lambda}(x)\Sigma\big(c\vee g_*(x)\big)
\end{align}
for all $x\in\RR$.
We now substitute \eqref{U&Ux02} and \eqref{U&Ux03} inside \eqref{def-H02} to obtain
\begin{align}\label{mirac01}
\hat{H}(x,c) = {} & \left[x\big(g_*(x)\vee c-c\big)+\tfrac{\lambda}{\lambda+\theta}\Phi\big(g_*(x)\vee c\big)\big(x+\tfrac{\mu\theta}{\lambda}\big)-P_0\right]\phi'_{\lambda}(x)\nonumber\\[+4pt]
&-\left[(g_*(x)\vee c-c)+\tfrac{\lambda}{\lambda+\theta}\Phi\big(g_*(x)\vee c\big)\right]\phi_{\lambda}(x)
\end{align}
In order to evaluate \eqref{mirac01} at $x^0(c)$ we recall \eqref{gen-U} and that $x\mapsto\mathcal{L}(x,c)$ is continuous. Then it may be rewritten in a more compact form as
\begin{align}\label{LL}
\mathcal{L}(x,c)=\left[\theta\mu-(\lambda+\theta)x\right]\big(g_*(x)\vee c-c\big)-\lambda\Phi(g_*(x)\vee c) x
\end{align}
and, by definition, $x^0(c)$ is such that
\begin{align}\label{mirac02}
-P_0=\tfrac{1}{\lambda}\mathcal{L}(x^0(c),c).
\end{align}
For simplicity set $x^0:=x^0(c)$, then plugging \eqref{mirac02} into \eqref{mirac01} and using \eqref{LL} we find
\begin{align}\label{mirac03}
\hat{H}(x^0,c)=\phi_{\lambda}(x^0)\left[\frac{\theta(\mu-x^0)}{\lambda}\,\frac{\phi'_{\lambda}(x^0)}
{\phi_{\lambda}(x^0)}-1\right]\left(g_*(x^0)\vee c -c+\frac{\lambda\Phi\bigl(g_*(x^0)\vee c\bigr)}{\lambda+\theta}\right).
\end{align}
Since $\big(\mathbb{L}_X-\lambda\big)\phi_{\lambda}=0$ for all $x\in\RR$ and $\phi''_{\lambda}>0$ on $\RR$ then it holds $\theta(\mu-x)\phi'_{\lambda}(x)-\lambda\phi_{\lambda}(x)<0$ for all $x\in\mathbb{R}$. Hence from \eqref{mirac03} we obtain $\hat{H}(x^0(c),c)<0$ and there must be at least one point $\ell_*(c)<x^0(c)$ that fulfils \eqref{smfit04}. By arbitrariness of $c\in[0,c_*)$ the proof is complete.
\vspace{0.2cm}\\

\noindent \emph{Proof of Proposition \ref{prop-VxUx}}\vspace{0.15cm}\\

First fix an arbitrary $c\in[0,1]$ and recall \eqref{eq:U&u}, \eqref{eq:u01} and \eqref{eq:u02}. Then by standard arguments based on dominated convergence theorem we get
\begin{align}
U_{xx}(x,c)=\phi''_{\lambda}(x)\int_{g_*(x)\vee c}^1 \frac{G({\beta_*(y)},y)}{\phi_{\lambda}({\beta_*(y)})}dy\qquad x\in\RR
\end{align}
by the affine nature of $x\mapsto G(x,c)$ (cf.~\eqref{def:G}). As expected $U_{xx}(\,\cdot\,,c)$ is continuous on $\RR$. Now by differentiating separately in the two regions $\big\{x\in\RR\,:\,c>g_*(x)\big\}$ and $\big\{x\in\RR\,:\,c< g_*(x)\big\}$, with the exception of points $x=\beta_*(1)$ and $x=\beta_*(0)$, recalling that $g_*$ is $C^1$ elsewhere (cf.~Proposition \ref{prop:backmat}-$i)$), $g_*'=0$ on $(-\infty,\beta_*(1))\cup(\beta_*(0),+\infty)$ and $\beta_*(g_*(x))=x$, we find
\begin{align}\label{exprFxxx}
U_{xxx}(x,c)=\phi'''_{\lambda}(x)\int_{g_*(x)\vee c}^1 \frac{G({\beta_*(y)},y)}{\phi_{\lambda}({\beta_*(y)})}dy -\phi''_{\lambda}(x)\frac{G(x,g_*(x))}{\phi_{\lambda}(x)}\,g'_*(x)\mathds{1}_{\{c< g_*(x)\}}
\end{align}
for a.e.~$x\in\mathbb{R}$ which shows
\begin{equation}\label{lem:Uxxx}
\forall c \in [0,1]: \enskip U_{xxx}(\,\cdot\,,c)\in L^\infty_{loc}(\RR).
\end{equation}

Now fix $\bar{c}\in[0,c_*)$ and take $\ell_*(\bar{c})$ solving \eqref{smfit04}. Since by definition $V^{\ell_*}(\ell_*(\bar{c}),\bar{c})=U(\ell_*(\bar{c}),\bar{c})-P_0$ it suffices to show that $V^{\ell_*}_x(\,\cdot\,,\bar{c})\le U_x(\,\cdot\,,\bar{c})$ on $\RR$ to verify the claim. The latter trivially holds for $x\le \ell_*(\bar{c})$ by \eqref{Vl-repr} and \eqref{smfit04}, hence it remains to prove it for $x>\ell_*(\bar{c})$.

From \eqref{Vl-repr} it follows that $\mathbb{L}_XV^{\ell_*}(x,\bar{c})-\lambda V^{\ell_*}(x,\bar{c})=0$ for $x>\ell_*(\bar{c})$ and it is not hard to verify by direct derivation of the latter that
\begin{align}
\mathbb{L}_XV_x^{\ell_*}(x,\bar{c})-(\lambda+\theta) V_x^{\ell_*}(x,\bar{c})=0\quad\text{for $x>\ell_*(\bar{c})$}
\end{align}
as well. On the other hand, from \eqref{gen-U} one obtains that $\mathcal{L}(\,\cdot\,,\bar{c})$ is differentiable for a.e.~$x\in\RR$, in particular with the exception of $x=\beta_*(\bar{c})$ and $x=\beta_*(1)$ (the latter by non differentiability of $g_*$ at that point). Then by \eqref{lem:Uxxx}, we obtain $\mathcal{L}_x(x,\bar{c})=\left(\mathbb{L}_X-(\lambda+\theta)\right)U_x(x,\bar{c})$ a.e.~$x\in\RR$ and with
\begin{align}\label{gen-Ux}
\mathcal{L}_x(x,\bar{c}):=\left\{
\begin{array}{ll}
-\lambda\,\Phi(\bar{c}) & \text{for $x>\beta_*(\bar{c})$}\\[+5pt]
-\lambda\Phi(g_*(x))-(\lambda+\theta)(g_*(x)-\bar{c}) & \\[+3pt]
\hspace{+60pt}+\big[\theta\mu-k\big(g_*(x)\big)\,x\big]g'_*(x) &  \text{for a.e.~$x\le \beta_*(\bar{c})$.}
\end{array}
\right.
\end{align}
Notice that since $\beta_*$ is strictly decreasing, $g'_*$ is bounded on $\RR$ and $\mathcal{L}_x(\,\cdot\,,\bar{c})$ is locally bounded on $\RR$ with $\big|\mathcal{L}_x(\,\cdot\,,\bar{c})\big|\le C(1+|x|)$ for $x\in\RR$ and a suitable constant $C>0$.

Define
\begin{align}\label{def:tau-l*}
\tau_\ell^*(x,\bar{c}):=\inf\{t\ge0\,\:\,X^x_t\le\ell_*(\bar{c})\}\qquad x\in\RR,
\end{align}
fix $x>\ell_*(\bar{c})$ and denote $\tau^*_\ell=\tau_\ell^*(x,\bar{c})$ for simplicity. Take $R>0$ arbitrary and fixed such that $-R<\beta_*(1)$ and $R>\beta_*(0)$, and denote $\tau_R:=\inf\{t\ge 0\,:\,|X^x_t|\ge R\}$. Since $U_{xx}(\cdot,\bar{c})$ is continuous and $U_{xxx}(\cdot,\bar{c})$ locally bounded, then we use an extension of It\^o's formula based on preliminary mollification of $U_x$ (cf.~\cite[Ch.~8,~Sec.~VIII.4,~Thm.~4.1]{FlemingSoner}) to obtain
\begin{align}
V^{\ell_*}_x(x,\bar{c})-U_x(x,\bar{c})
= {} & \EE\left[e^{-(\lambda+\theta)(\tau_{\ell}^*\wedge\tau_R)}\left(V^{\ell_*}_x\big(X^x_{\tau_{\ell}^*\wedge\tau_R},\bar{c}\big)-
U_x\big(X^x_{\tau_{\ell}^*\wedge\tau_R},\bar{c}\big)\right)\right] \nonumber \\
&+\EE\left[\int^{\tau_{\ell}^*\wedge\tau_R}_0{e^{-(\lambda+\theta)s}
\mathcal{L}_x\big(X^x_{s},\bar{c})ds}\right]\nonumber\\
\le {} & \EE\left[e^{-(\lambda+\theta)(\tau_{\ell}^*\wedge\tau_R)}\left(V^{\ell_*}_x\big(X^x_{\tau_{\ell}^*\wedge\tau_R},\bar{c}\big)-
U_x\big(X^x_{\tau_{\ell}^*\wedge\tau_R},\bar{c}\big)\right)\right] \label{PDE}
\end{align}
where the inequality is due to \eqref{prop-Lx01}, \eqref{prop-Lx03} and \eqref{prop-Lx02}. In order to evaluate the last expression in the right-hand side of \eqref{PDE} notice that on the set $\{\tau_R<\tau^*_\ell\}$ one has either $X^x_{\tau_R}=-R$ which implies $V^{\ell_*}_x\big(-R,\bar{c}\big)-
U_x\big(-R,\bar{c}\big)=0$ by \eqref{Vl-repr}, or $X^x_{\tau_R}=R$ which, along with \eqref{Vl-repr} and \eqref{U&Ux03}, implies instead $\big|V^{\ell_*}_x\big(R,\bar{c}\big)-
U_x\big(R,\bar{c}\big)\big|\le D (1+|\phi_\lambda'(R)|)$ for a suitable constant $D>0$. Hence, noting that $V^{\ell_*}_x\big(X^x_{\tau_{\ell}^*},\bar{c}\big)-
U_x\big(X^x_{\tau_{\ell}^*},\bar{c}\big)=0$, $\PP$-a.s.~by the smooth fit condition \eqref{smfit04}, we get
\begin{align}\label{eq:Prop-4-13-Proof}
V^{\ell_*}_x(x,\bar{c})-U_x(x,\bar{c})\le D (1+|\phi_\lambda'(R)|) \EE\bigl[e^{-(\lambda+\theta)\tau_R}\mathds{1}_{\{\tau_R<\tau^*_\ell\}}\mathds{1}_{\{X^x_{\tau_R}=R\}}\bigr] \quad\text{for $x>\ell_*(\bar{c})$}
\end{align}
Since $R \mapsto \phi_\lambda(R)$ is strictly convex and decreasing (cf. Section~\ref{factsOU}), the function $R \mapsto \phi_\lambda'(R)$ is negative and increasing, which means $R \mapsto D (1+|\phi_\lambda'(R)|)$ is non-negative and \emph{decreasing}. By taking limits as $R\to\infty$ in \eqref{eq:Prop-4-13-Proof}, and recalling also the discussion above, we conclude that $V^{\ell_*}_x(x,\bar{c})-U_x(x,\bar{c})\le0$ for $x\in\RR$. The proof is complete since $\bar{c}$ was arbitrary.

\section{Some Facts on the Ornstein-Uhlenbeck Process}
\label{factsOU}
\renewcommand{\theequation}{B-\arabic{equation}}

Recall the Ornstein-Uhlenbeck process $X$ of \eqref{OU}. It is well known that $X$ is a positively recurrent Gaussian process (cf., e.g., \cite{BorodinSalminen}, Appendix 1, Section 24, pp.\ 136-137) with state space $\mathbb{R}$ and that \eqref{OU} admits the explicit solution
\beq
\label{OUexplicit}
X^x_t= \mu + (x-\mu)e^{-\theta t} + \int_0^t \sigma e^{\theta(s-t)}dB_s.
\eeq
We introduced its infinitesimal generator $\mathbb{L}_{X}$ in \eqref{def:LX};
the characteristic equation $\mathbb{L}_{X}u = \lambda u$, $\lambda > 0$, admits the two linearly independent, positive solutions (cf.~\cite{JYC}, p.\ 280)
\beq
\label{phi}
\phi_{\lambda}(x):=
e^{\frac{\theta(x-\mu)^2}{2\sigma^2}}D_{-\frac{\lambda}{\theta}}\Big(\frac{(x-\mu)}{\sigma}\sqrt{2\theta}\Big)
\eeq
and
\beq
\label{psi}
\psi_{\lambda}(x):=
e^{\frac{\theta(x-\mu)^2}{2\sigma^2}}D_{-\frac{\lambda}{\theta}}\Big(-\frac{(x-\mu)}{\sigma}\sqrt{2\theta}\Big),
\eeq
which are strictly decreasing and strictly increasing, respectively. In both \eqref{phi} and \eqref{psi} $D_{\alpha}$ is the cylinder function of order $\alpha$ (see \cite{Trascendental}, Chapter VIII, among others) and it is also worth recalling that (see, e.g., \cite{Trascendental}, Chapter VIII, Section 8.3, eq.\ (3) at page 119)
\begin{align}
\label{cylinder}
D_{\alpha}(x):= \frac{e^{-\frac{x^2}{4}}}{\Gamma(-\alpha)}\int_0^{\infty}t^{-\alpha -1} e^{-\frac{t^2}{2} - x t} dt, \quad \text{Re}(\alpha)<0,
\end{align}
where $\Gamma(\cdot)$ is Euler's Gamma function.

We denote by $\PP_x$ the probability measure on $(\Omega, \cF)$ induced by the process $(X^x_t)_{t\ge0}$, i.e.~such that $\mathbb{P}_x(\,\cdot\,) = \mathbb{P}(\,\cdot\,| X(0)=x)$, $x \in \mathbb{R}$, and by $\EE_x[\,\cdot\,]$ the expectation under this measure.  Then, it is a well known result on one-dimensional regular diffusion processes (see, e.g., \cite{BorodinSalminen}, Chapter I, Section 10) that
\begin{equation}
\label{hittingtimes}
\EE_x[e^{-\lambda\tau_y}]=
\left\{
\begin{array}{ll}
\displaystyle \frac{\phi_{\lambda}(x)}{\phi_{\lambda}(y)}, \quad x \geq y,\\
\\
\displaystyle \frac{\psi_{\lambda}(x)}{\psi_{\lambda}(y)}, \quad x \leq y,
\end{array}
\right.
\end{equation}
with $\phi_{\lambda}$ and $\psi_{\lambda}$ as in \eqref{phi} and \eqref{psi} and $\tau_{y}:=\inf\{t \geq 0: X^x_t = y\}$ the hitting time of $X^x$ at level $y \in \mathbb{R}$. Due to the recurrence property of the Ornstein-Uhlenbeck process $X$ one has $\tau_{y} < \infty$ $\PP_x$-a.s.\ for any $x,y \in \R$.

It is also useful to recall here some convergence and integrability properties of $X$.
\begin{lemma}
\label{lem:convX}
One has 
$$\liminf_{t \uparrow \infty}e^{-\lambda t}|X^x_t| =0, \qquad a.s.$$
\end{lemma}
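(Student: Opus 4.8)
The plan is to exploit the recurrence of the Ornstein-Uhlenbeck process recorded at the end of Appendix \ref{factsOU}, namely that $\tau_y<\infty$ $\PP_x$-a.s.\ for every $x,y\in\RR$. From this I would extract a (random) sequence of times $\rho_n\uparrow\infty$ at which $X^x$ vanishes; since $e^{-\lambda\rho_n}|X^x_{\rho_n}|=0$ for every $n$, the $\liminf$ is forced to be zero.

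In detail: fix $x\in\RR$ and set $\rho_0:=\inf\{t\ge0\,:\,X^x_t=0\}$, which is finite a.s.\ by the recurrence property. Inductively define $\rho_{n}:=\inf\{t\ge\rho_{n-1}+1\,:\,X^x_t=0\}$ for $n\ge1$. Applying the strong Markov property at the stopping time $\rho_{n-1}+1$ and using again that the hitting time of $0$ is a.s.\ finite from any starting point, one obtains by induction that $\rho_n<\infty$ a.s.\ for all $n$; moreover $\rho_n\ge\rho_{n-1}+1$, so $\rho_n\uparrow\infty$ a.s. By continuity of the paths $X^x_{\rho_n}=0$, hence $e^{-\lambda\rho_n}|X^x_{\rho_n}|=0$ for all $n$, and since $\rho_n\uparrow\infty$ this yields $\liminf_{t\uparrow\infty}e^{-\lambda t}|X^x_t|\le 0$, while the reverse inequality is trivial.

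An alternative, self-contained route avoids recurrence and even yields the stronger statement $\lim_{t\uparrow\infty}e^{-\lambda t}|X^x_t|=0$ a.s.: starting from the explicit representation \eqref{OUexplicit}, the deterministic part $\mu+(x-\mu)e^{-\theta t}$ is bounded, so $e^{-\lambda t}$ times it vanishes, while the stochastic part equals $\sigma e^{-\theta t}G_t$ with $G_t:=\int_0^t e^{\theta s}dB_s$ a continuous martingale satisfying $\langle G\rangle_t=(e^{2\theta t}-1)/(2\theta)$. Writing $G_t=\beta_{\langle G\rangle_t}$ for a standard Brownian motion $\beta$ via the Dambis--Dubins--Schwarz theorem and invoking the law of the iterated logarithm gives $|G_t|=O\!\big(e^{\theta t}\sqrt{\log t}\,\big)$ a.s., whence $\sigma e^{-\theta t}|G_t|=O(\sqrt{\log t})$ and $e^{-(\lambda+\theta)t}\sigma|G_t|\to0$ a.s. The only mildly delicate point in either approach is routine: in the first, checking that the iterated hitting times genuinely diverge (handled by the ``$+1$'' offset together with the strong Markov property); in the second, turning the iterated-logarithm asymptotics for $\beta$ into the stated polynomial-in-$\log$ bound for $G_t$. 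Neither presents a real obstacle, so the lemma follows.
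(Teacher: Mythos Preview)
Your argument is correct and proceeds along genuinely different lines from the paper. The paper's proof is an $L^1$ argument: it bounds $\EE[|X^x_t|]$ using the explicit representation \eqref{OUexplicit} together with the It\^o isometry, deduces $\liminf_{t\uparrow\infty}e^{-\lambda t}\EE[|X^x_t|]=0$, and then applies Fatou's lemma to conclude that $\EE\big[\liminf_{t\uparrow\infty}e^{-\lambda t}|X^x_t|\big]=0$, whence the nonnegative $\liminf$ vanishes a.s.

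Your first route is more direct and arguably more elementary: it uses only the positive recurrence of $X$ (already recorded in Appendix~\ref{factsOU}) and the strong Markov property to produce explicit times at which the process vanishes; no moment estimates are needed. Your second route, via the Dambis--Dubins--Schwarz representation and the law of the iterated logarithm, is heavier machinery but buys the strictly stronger conclusion that the full limit (not merely the $\liminf$) is zero, which would in fact slightly streamline the application in Proposition~\ref{decoupling}. The paper's moment-based approach, by contrast, avoids both recurrence and pathwise asymptotics and needs only a one-line integrability estimate, at the cost of yielding only the $\liminf$ statement.
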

\begin{proof}
Define $\Xi:=\liminf_{t \uparrow \infty}e^{-\lambda t}|X^x_t|$ and notice that clearly $\Xi\geq 0$ a.s.
We now claim (and prove later) that $\liminf_{t \uparrow \infty}e^{-\lambda t}\EE\big[|X^x_t|\big]=0$ to obtain by Fatou Lemma
\begin{equation}
\label{FatouconvX}
0 \leq \EE\big[\Xi\big] \leq \liminf_{t \uparrow \infty}e^{-\lambda t}\EE\big[|X^x_t|\big]= 0;
\end{equation}
that is, $\EE\big[\Xi\big]=0$ and hence $\Xi=0$ a.s.\ by nonnegativity of $\Xi$.

To complete the proof we have thus only to show that $\liminf_{t \uparrow \infty}e^{-\lambda t}\EE\big[|X^x_t|\big]=0$.
By \eqref{OUexplicit} and H\"{o}lder inequality one has 
\begin{align}
\label{convEX}
\EE\big[|X^x_t|\big] & \leq \mu + e^{-\theta t}|x-\mu| + e^{-\theta t}\EE\bigg[\Big|\int_0^t e^{\theta s}dB_s\Big|^2\bigg]^{\frac{1}{2}} \\
& = \mu + e^{-\theta t}|x-\mu| + e^{-\theta t}\frac{1}{2\theta}(e^{2\theta}-1)^{\frac{1}{2}}, \nonumber 
\end{align}
where also It\^o isometry has been used. It is now easily checked that \eqref{convEX} implies the claim.
\end{proof}

\begin{lemma}\label{lem:uiX}
Fix $x\in\RR$, and set $\tau_R:=\inf\{t\ge0\,:\,|X^x_t|\ge R\}$, $R>0$,
then the family
$\{e^{-\lambda\tau_R}|X^x_{\tau_R}|\,:\,R>0 \}$ is uniformly integrable.
\end{lemma}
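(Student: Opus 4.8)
\noindent\emph{Proof of Lemma \ref{lem:uiX}}\vspace{0.15cm}\\

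The plan is to prove the stronger claim that $\big\{e^{-\lambda\tau_R}|X^x_{\tau_R}|:R>0\big\}$ is bounded in $L^2(\PP)$; uniform integrability then follows at once, since if $M:=\sup_{R>0}\EE\big[(e^{-\lambda\tau_R}|X^x_{\tau_R}|)^2\big]<\infty$ then for every $R>0$ and every $K>0$ one has $\EE\big[e^{-\lambda\tau_R}|X^x_{\tau_R}|\,\mathds{1}_{\{e^{-\lambda\tau_R}|X^x_{\tau_R}|>K\}}\big]\le M/K\to0$ as $K\to\infty$, and $\sup_R\EE\big[e^{-\lambda\tau_R}|X^x_{\tau_R}|\big]\le\sqrt M<\infty$. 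For $R\le|x|$ we have $\tau_R=0$ and hence $e^{-\lambda\tau_R}|X^x_{\tau_R}|=|x|$ is a fixed constant, so such members of the family contribute nothing, and it suffices to bound $\EE\big[(e^{-\lambda\tau_R}|X^x_{\tau_R}|)^2\big]$ uniformly over $R>|x|$.

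Fix $R>|x|$, so that $x\in(-R,R)$. By positive recurrence of $X$ (cf.\ Appendix \ref{factsOU}) we have $\tau_R<\infty$ $\PP$-a.s., and since $X$ has continuous paths and starts strictly inside $(-R,R)$ it follows that $|X^x_{\tau_R}|=R$ $\PP$-a.s.; thus $e^{-\lambda\tau_R}|X^x_{\tau_R}|=Re^{-\lambda\tau_R}$. Set $\tau^+:=\inf\{t\ge0:X^x_t=R\}$ and $\tau^-:=\inf\{t\ge0:X^x_t=-R\}$, so that $\tau_R=\tau^+\wedge\tau^-$, and on $\{\tau^+<\tau^-\}$ (resp.\ $\{\tau^-<\tau^+\}$) one has $\tau_R=\tau^+$ (resp.\ $\tau_R=\tau^-$), whence $e^{-2\lambda\tau_R}\le e^{-2\lambda\tau^+}+e^{-2\lambda\tau^-}$ $\PP$-a.s. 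Taking expectations and applying \eqref{hittingtimes} with parameter $2\lambda$ (legitimate since $-R<x<R$) yields
\[
\EE\big[(e^{-\lambda\tau_R}|X^x_{\tau_R}|)^2\big]=R^2\,\EE\big[e^{-2\lambda\tau_R}\big]\le R^2\Big(\tfrac{\psi_{2\lambda}(x)}{\psi_{2\lambda}(R)}+\tfrac{\phi_{2\lambda}(x)}{\phi_{2\lambda}(-R)}\Big),\qquad R>|x|.
\]

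It remains to show that the right-hand side stays bounded as $R\uparrow\infty$; since $\psi_{2\lambda}$ and $\phi_{2\lambda}$ are continuous and strictly positive, this reduces to verifying that $R^2=o\big(\psi_{2\lambda}(R)\big)$ as $R\to+\infty$ and $R^2=o\big(\phi_{2\lambda}(-R)\big)$ as $R\to+\infty$. This is the only non-routine point, and I expect it to be the main obstacle, but it is in fact classical: from the representations \eqref{phi}--\eqref{psi} and the standard asymptotics of the parabolic cylinder function one gets $\psi_\lambda(y)\sim C\,y^{\lambda/\theta-1}e^{\theta(y-\mu)^2/\sigma^2}$ as $y\to+\infty$ and, symmetrically, $\phi_\lambda(y)\sim C'\,|y|^{\lambda/\theta-1}e^{\theta(y-\mu)^2/\sigma^2}$ as $y\to-\infty$, both of which grow faster than any polynomial. (Alternatively one may argue directly from the ODE: $\mathbb{L}_X\psi_{2\lambda}=2\lambda\psi_{2\lambda}$ together with $\psi_{2\lambda}>0$, $\psi_{2\lambda}'>0$ gives $\tfrac12\sigma^2\psi_{2\lambda}''\ge\theta(y-\mu)\psi_{2\lambda}'$ for $y>\mu$, hence $\psi_{2\lambda}'(y)\ge\psi_{2\lambda}'(\mu)\,e^{\theta(y-\mu)^2/\sigma^2}$ and the super-polynomial growth of $\psi_{2\lambda}$ follows by integration; the estimate for $\phi_{2\lambda}$ near $-\infty$ is entirely analogous.) Consequently $\sup_{R>|x|}R^2\big(\psi_{2\lambda}(x)/\psi_{2\lambda}(R)+\phi_{2\lambda}(x)/\phi_{2\lambda}(-R)\big)<\infty$, which together with the first paragraph completes the proof. $\quad\Box$
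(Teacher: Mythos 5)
Your proof is correct and follows essentially the same route as the paper: reduce to $L^2$ boundedness, decompose $\tau_R$ into the one-sided hitting times of $\pm R$, apply \eqref{hittingtimes} with parameter $2\lambda$, and invoke super-polynomial growth of $\psi_{2\lambda}$ near $+\infty$ and $\phi_{2\lambda}$ near $-\infty$. The only difference is cosmetic: you spell out the uniform integrability deduction, the trivial case $R\le|x|$, and the growth estimate for the fundamental solutions (which the paper simply asserts as a "super quadratic trend").
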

\begin{proof}
It suffices to show that $\{e^{-\lambda\tau_R}|X^x_{\tau_R}|\,:\,R>0 \}$ is uniformly bounded in $L^2(\Omega,\PP)$. With no loss of generality we take $x\in (-R,R)$ so that we can write $\tau_R=\tau^+_R\wedge\tau^-_R$ $\PP$-a.s.~with $\tau^+_R:=\inf\{t\ge0\,:\,X^x_t\ge R\}$ and $\tau^-_R:=\inf\{t\ge0\,:\,X^x_t\le -R\}$. From recurrence of $X$ we get
\begin{align}\label{eq:Xui1}
\EE\Big[ e^{-2\lambda \tau_R}|X^x_{\tau_R}|^2\Big]& = R^2\Big(\EE\Big[e^{-2\lambda\tau^+_R}\mathds{1}_{\{\tau^+_R<\tau^-_R\}}\Big]+\EE  \Big[e^{-2\lambda\tau^-_R}\mathds{1}_{\{\tau^-_R<\tau^+_R\}}\Big] \Big)\\
& \le  R^2\left[\tfrac{\psi_{2\lambda}(x)}{\psi_{2\lambda}(R)}+\tfrac{\phi_{2\lambda}(x)}{\phi_{2\lambda}(-R)}\right]\nonumber.
\end{align}
As $R\to \infty$ the functions $\phi_{2\lambda}(-R)$ and $\psi_{2\lambda}(R)$ diverge to infinity with a super quadratic trend, hence there exists a constant $C(x)>0$ depending only on $x\in\RR$ such that $$\sup_{R>0}\EE\Big[e^{-2\lambda \tau_R}|X^x_{\tau_R}|^2\Big]\le C(x).$$
\end{proof}

\bigskip

\textbf{Acknowledgements.}
The first, third and fourth named authors express their gratitude to the UK Engineering and Physical Sciences Research Council (EPSRC) for its financial support via grant EP/K00557X/1. Financial support by the German Research Foundation (DFG) via grant Ri--1128--4--2 is gratefully acknowledged by the
second named author.

\newpage

\par \leftskip=24pt

\noindent Tiziano De Angelis \\
School of Mathematics \\
University of Leeds \\
Woodhouse Lane \\
Leeds LS2 9JT 9PL \\
United Kingdom \\
\texttt{t.deangelis@leeds.ac.uk}

\vspace{2pc}

\par \leftskip=24pt

\noindent Giorgio Ferrari \\
Center for Mathematical Economics \\
Bielefeld University \\
Universit\"atsstrasse 25 \\
D-33615 Bielefeld \\
Germany\\
\texttt{giorgio.ferrari@uni-bielefeld.de}

\vspace{2pc}

\par \leftskip=24pt

\noindent Randall Martyr \\
School of Mathematical Sciences \\
Queen Mary University of London \\
Mile End Road \\
London E1 4NS \\
United Kingdom \\
\texttt{r.martyr@qmul.ac.uk}
\vspace{2pc}

\par \leftskip=24pt

\noindent John Moriarty \\
School of Mathematical Sciences \\
Queen Mary University of London \\
Mile End Road \\
London E1 4NS \\
United Kingdom \\
\texttt{j.moriarty@qmul.ac.uk}
\end{document}